  \theoremstyle{plain}
  \newtheorem{thm}{Theorem}[section]
  \newtheorem{lem}[thm]{Lemma}
  \newtheorem{pro}[thm]{Proposition}
  \newtheorem{cor}[thm]{Corollary}
  \newtheorem{hyp}[thm]{Hypothesis}
  \newtheorem*{con*}{Conjecture}
  \newtheorem{lemma}[thm]{Lemma}
  \theoremstyle{remark}
  \newtheorem{rem}[thm]{Remark}
  \newtheorem{exm}[thm]{Example}
  \newtheorem{dfn}[thm]{Definition}
  \newtheorem*{acknowledgements}{Acknowledgements}
  \numberwithin{equation}{section}
  \numberwithin{table}{section}
  \newcommand{\N}{\mathbb{N}}
  \newcommand{\Z}{\mathbb{Z}}
  \newcommand{\Q}{\mathbb{Q}}
  \newcommand{\C}{\mathbb{C}}
  \newcommand{\R}{\mathbb{R}}
  \newcommand{\mfg}{\mathfrak{g}}
  \newcommand{\mff}{\mathfrak{f}}
  \newcommand{\mfp}{\mathfrak{p}}
  \newcommand{\lri}{\mathfrak{o}}
  \newcommand{\Lri}{\mathfrak{O}}
  \newcommand{\ol}{\overline}
  \newcommand{\Gri}{\ensuremath{\mathcal{O}}}
  \renewcommand{\epsilon}{\varepsilon}
  \renewcommand{\phi}{\varphi}
  \newcommand{\mcO}{\mathcal{O}}
  \newcommand{\rarr}{\rightarrow}
  \newcommand{\epsi}{\varepsilon}
  \newcommand{\ideal}{\triangleleft}
  \newcommand{\zidealo}{\zeta^{\triangleleft\,\lri}}
  \newcommand{\Rho}{\mathrm{P}}
  \DeclareMathOperator{\WO}{WO}
  \DeclareMathOperator{\Des}{Des}
  \DeclareMathOperator{\ad}{ad}
  \DeclareMathOperator{\rk}{rk}
  \DeclareMathOperator{\Mat}{Mat}
  \def \wo {n}
  \def \bsnu {\boldsymbol{\nu}}
  \def \bfz {{\bf 0}}
  \def \bfo {{\bf 1}}
  \def \bff {{\bf f}}
  \def \bfX {{\bf X}}
  \def \bfy {{\bf y}}
  \def \bfY {{\bf Y}}
  \def \bfone {{\bf 1}}
  \def \mcD {\ensuremath{\mathcal{D}}}
  \def \mcL {\ensuremath{\mathcal{L}}}
  \def \mfL {\ensuremath{\mathfrak{L}}}
  \def \Fp {\ensuremath{\mathbb{F}_p}}
  \def \mcP {\ensuremath{\mathcal{P}}}
  \def \mfo {\ensuremath{\mathfrak{o}}}
  \def \mfO {\ensuremath{\mathfrak{O}}}
  \def \Fq {\ensuremath{\mathbb{F}_q}}
  \def \Zp  {\mathbb{Z}_p}
  \def \Z {\mathbb{Z}}
  \def \g {\mathfrak{g}}
  \def \pnum {h}
  \def \klim {Z}
  \newcommand{\gp}[1]{\frac{#1}{1-#1}}
  \newcommand{\gpf}[1]{\mathrm{gp}(#1)}
  \newcommand{\gpzero}[1]{\mathrm{gp_0}(#1)}
  \def \la {\langle}
  \def \ra {\rangle}
  \author{Angela Carnevale} \address{School of Mathematical and Statistical Sciences,  University of  Galway, Ireland}
  \email{angela.carnevale@universityofgalway.ie}
  \author{Michael M. Schein} \address{Department of Mathematics, Bar-Ilan University, Ramat Gan 5290002, Israel} 
  \email{mschein@math.biu.ac.il}
  \author{Christopher Voll} \address{Fakult\"at f\"ur Mathematik,
    Universit\"at Bielefeld, D-33501 Bielefeld, Germany}
    \email{C.Voll.98@cantab.net}
  \keywords{Subgroup growth, ideal growth, normal zeta functions,
    ideal zeta functions, Igusa functions, combinatorial reciprocity
    theorems}
\subjclass[2010]{11M41, 05A15, 20E07}
\begin{document}
   \title[Generalized Igusa functions and ideal growth in nilpotent
     Lie rings]{Generalized Igusa functions and ideal growth in
     nilpotent Lie rings} 

   \begin{abstract}
     We introduce a new class of combinatorially defined rational
     functions and apply them to deduce explicit formulae for local
     ideal zeta functions associated to the members of a large class
     of nilpotent Lie rings which contains the free class-2-nilpotent
     Lie rings and is stable under direct products. Our results unify
     and generalize a substantial number of previous computations. We
     show that the new rational functions, and thus also the local
     zeta functions under consideration, enjoy a self-reciprocity
     property, expressed in terms of a functional equation upon
     inversion of variables. We establish a conjecture of Grunewald,
     Segal, and Smith on the uniformity of normal zeta functions of
     finitely generated free class-$2$-nilpotent groups.
\end{abstract}
  \maketitle
\thispagestyle{empty}

  \setcounter{tocdepth}{4}

\section{Introduction}
The objective of this paper is twofold.  The first aim is
to introduce a new class of combinatorially defined multivariate
rational functions and to prove that they satisfy a self-reciprocity
property, expressed in terms of a functional equation upon inversion
of variables. The second is to apply these rational functions to
obtain an explicit description of the local ideal zeta functions associated to a
class of combinatorially defined Lie rings. We start with a discussion
of the latter application before formulating and explaining the new
class of rational functions.

  \subsection{Finite uniformity for ideal zeta functions of nilpotent Lie rings}
  Given an additively finitely generated ring $\mcL$, i.e.\ a finitely
  generated $\Z$-module with some bi-additive, not necessarily
  associative multiplication, the ideal zeta function of $\mcL$ is the
  Dirichlet generating series
  \begin{equation}\label{def:ideal.zeta}
    \zeta^{\ideal}_{\mcL}(s) = \sum_{I \ideal \mcL}|\mcL:I|^{-s},
  \end{equation}
  where $I$ runs over the (two-sided) ideals of $\mcL$ of finite additive
  index in $\mcL$ and $s$ is a complex variable. Prominent examples of
  ideal zeta functions include the Dedekind zeta functions,
  enumerating ideals of rings of integers of algebraic number fields
  and, in particular, Riemann's zeta function $\zeta(s)$.

  It is not hard to verify that, for a general ring $\mcL$, the ideal
  zeta function $\zeta^{\ideal}_{\mcL}(s)$ satisfies an Euler product
  whose factors are indexed by the rational primes:
$$\zeta^{\ideal}_{\mcL}(s) = \prod_{p \textup{
    prime}}\zeta^{\ideal}_{\mcL(\Zp)}(s),$$ where, for a prime $p$,
  $$\zeta^{\ideal}_{\mcL(\Zp)}(s) = \sum_{I \ideal
    \mcL(\Zp)}|\mcL(\Zp): I|^{-s}$$ enumerates the ideals of finite
  index in the completion $\mcL(\Zp) := L\otimes_\Z \Zp$ or,
  equivalently, the ideals of finite $p$-power index in $\mcL$. Here
  $\Zp$ denotes the ring of $p$-adic integers; note that ideals of
  $\mcL(\Zp)$ are, in particular, $\Zp$-submodules of $\mcL(\Zp)$. It
  is, in contrast, a deep result that the Euler factors
  $\zeta^{\ideal}_{\mcL(\Zp)}(s)$ are rational functions in the
  parameter~$p^{-s}$; cf.\ \cite[Theorem~3.5]{GSS/88}.

  Computing these rational functions explicitly for a given ring
  $\mcL$ is, in general, a very hard problem. Solving it is usually
  rewarded by additional insights into combinatorial, arithmetic, or
  asymptotic aspects of ideal growth. It was shown by du Sautoy and Grunewald
  \cite{duSG/00} that the problem, in general, involves the
  determination of the numbers of $\Fp$-rational points of finitely
  many algebraic varieties defined over $\Q$. Only under
  additional assumptions on $\mcL$ may one hope that these numbers are
  given by finitely many polynomial functions in $p$. We say that the
  ideal zeta function of $\mcL$ is \emph{finitely uniform} if there
  are finitely many rational functions
  $W^{\ideal}_1(X,Y),\dots,W^{\ideal}_N(X,Y) \in \Q(X,Y)$ such that
  for any prime $p$ there exists $i\in\{1,\dots,N\}$ such that
  $$\zeta^{\ideal}_{\mcL(\Zp)}(s) = W^{\ideal}_i(p,p^{-s}).$$ If a
  single rational function suffices (i.e.\ $N=1$), we say that the
  ideal zeta function of $\mcL$ is \emph{uniform}. While finite
  uniformity dominates among low-rank examples, including most of
  those included in the book \cite{duSWoodward/08} and those computed
  by Rossmann's computer algebra package
  $\mathsf{Zeta}$~\cite{rossmannzeta, Rossmann/18}, it is not
  ubiquitous: for a non-uniform example in rank~$9$, see
  \cite{duS-ecII/01} and~\cite{Voll/04}.  In general, the ideal zeta
  function of a direct product of rings is not given by a simple
  function of the ideal zeta functions of the factors. It is not even
  clear whether (finite) uniformity of the latter implies (finite)
  uniformity of the former.

  \subsubsection{Main results}

We now restrict to the case of Lie rings, namely rings in which the multiplication is anti-symmetric and satisfies the Jacobi identity; note that the Jacobi identity holds trivially for all nilpotent rings of class at most two.    
In this paper we give constructive proofs of (finite) uniformity of
ideal zeta functions associated to the members of a large class of
nilpotent Lie rings of nilpotency class at most two.

\begin{dfn}\label{def:L}
  Let $\mfL$ denote the class of nilpotent Lie rings of nilpotency
  class at most two which is closed under direct products and contains
  the following Lie rings:
  \begin{enumerate}
  \item the free class-$2$-nilpotent Lie rings $\mff_{2,d}$ on $d$
    generators, for $d\geq 2$;
    cf.\ Section~\ref{subsec:free.nilpotent}.
  \item the free class-$2$-nilpotent products $\mfg_{d,d'} = \Z^d *
    \Z^{d'}$, for $d,d' \geq 0$;
    cf.\ Section~\ref{subsec:rel.free.prod}.
    \item the higher Heisenberg Lie rings $\mathfrak{h}_d$ for $d \geq
      1$; cf.~Section~\ref{subsec:higher.heisenberg}.
  \end{enumerate}
\end{dfn}
Note that $\mcL$ contains the free abelian Lie rings $\Z^d =
\mfg_{d,0}=\mfg_{0,d}$.

Our main ``global'' result produces explicit formulae for almost all
Euler factors of the ideal zeta functions associated to Lie rings
obtained from the members of $\mfL$ by base extension with general
rings of integers of number fields. In particular, we show that these
zeta functions are finitely uniform and, more precisely, that the
variation of the Euler factors is uniform among
unramified primes with the same decomposition behaviour in the
relevant number field.
\begin{thm}\label{thm:main.global}
  Let $\mcL$ be an element of $\mfL$, and let
  $\bff = (f_1,\dots,f_g) \in\N^g$ be a $g$-tuple for some $g \in
  \N$. There exists an explicitly described rational function
  $W^{\ideal}_{\mcL,\bff} \in \Q(X,Y)$ such that the following holds:

 Let $\mcO$ be the ring of integers of a number field of degree $n$, and set
 $\mcL(\mcO) = \mcL \otimes \mcO$.  If a rational prime $p$ factorizes
 in $\mcO$ as $p \mcO =\mfp_1 \mfp_2 \cdots\mfp_g$, for pairwise
 distinct prime ideals $\mfp_i$ in $\mcO$ of inertia degrees
 $(f_1,\dots,f_g)$, then
 $$\zeta^{\ideal}_{\mcL(\mcO),p}(s) =
 W^{\ideal}_{\mcL,\bff}(p,p^{-s}).$$ In particular,
 $\zeta^{\ideal}_{\mcL(\mcO)}(s)$ is finitely uniform and
 $\zeta^{\ideal}_{\mcL}(s)=\zeta^{\ideal}_{\mcL(\Z)}(s)$ is uniform.  Moreover, the rational function $ W^{\ideal}_{\mcL,\bff}$ satisfies the functional equation
\begin{equation} \label{equ:global.functional.equation}
W^{\ideal}_{\mcL,\bff}(X^{-1},Y^{-1}) = (-1)^{n \, \mathrm{rk}_{\Z}\mcL} X^{\binom{n \, \mathrm{rk}_{\Z}\mcL}{2}} Y^{n \, (\mathrm{rk}_{\Z}\mcL + \mathrm{rk}_{\Z}(\mcL / Z(\mcL)))} W^{\ideal}_{\mcL,\bff}(X,Y).
\end{equation}
\end{thm}

A special case of Theorem~\ref{thm:main.global} establishes part of a
conjecture of Grunewald, Segal, and Smith on the normal subgroup
growth of free nilpotent groups under extension of
scalars. In~\cite{GSS/88}, they introduced the concept of the
\emph{normal zeta function}
$$\zeta_G^{\ideal}(s) = \sum_{H\ideal G}|G:H|^{-s}$$ of a torsion-free
finitely generated nilpotent group $G$, enumerating the normal
subgroups of $G$ of finite index in $G$. As $G$ is nilpotent, it also
satisfies an Euler product decomposition
$$\zeta_G^{\ideal}(s) = \prod_{p \textup{
    prime}}\zeta_{G,p}^{\ideal}(s),$$ whose factors enumerate the
normal subgroups of $G$ of $p$-power index. If $G$ has nilpotency
class two, then its normal zeta function coincides with the ideal zeta
function of the associated Lie ring $\mcL_G := G/Z(G) \oplus Z(G)$;
see the remark on p.~206 of \cite{GSS/88} and the more detailed discussion in~\cite[\S3.1]{BKO/even}.  Thus, $\zeta^{\ideal}_G(s) =
\zeta^{\ideal}_{\mcL_G}(s)$. Moreover, every class-$2$-nilpotent Lie
ring $\mcL$ arises in this way and gives rise to a torsion-free
finitely generated nilpotent group $G(\mcL)$; see
\cite[Section~1.2]{Voll/19} for details.
Theorem~\ref{thm:main.global} thus has a direct corollary pertaining
to the normal zeta functions of the finitely generated
class-$2$-nilpotent groups corresponding to the Lie rings in $\mfL$.
Since the groups associated to the free class-$2$-nilpotent Lie rings
$\mff_{2,d}$ are the finitely generated free class-$2$-nilpotent
groups~$F_{2,d} = G(\mff_{2,d})$, Theorem~\ref{thm:main.global}
implies the Conjecture on p.~188 of \cite{GSS/88} for the case $*
=\ideal$ and class $c=2$. The conjecture for normal zeta functions had
previously been established only for $d=2$ (\cite[Theorem~3]{GSS/88};
see also Section~\ref{subsubsec:previous}). We are not aware of any
other case for which the conjecture has been proven or refuted.

For any class-$2$-nilpotent Lie ring $\mcL$, it is known~\cite[Theorem~C]{Voll/10} that the Euler factors of $\zeta^{\ideal}_{\mcL}(s)$ at almost all primes $p$ are realized by rational functions admitting functional equations with the same symmetry factor $(-1)^{\mathrm{rk}_{\Z} \mcL} X^{\binom{\mathrm{rk}_{\Z} \mcL}{2}} Y^{\mathrm{rk}_{\Z} \mcL + \mathrm{rk}_{\Z} (\mcL / Z(\mcL))}$.  In particular, the functional equation~\eqref{equ:global.functional.equation} of Theorem~\ref{thm:main.global} shows that, for the Lie rings $\mathcal{L}(\mathcal{O})$, where $\mcL$ lies in our class $\mathfrak{L}$ and $\mathcal{O}$ is a number ring, the finitely many primes excluded by~\cite[Theorem~C]{Voll/10} must ramify in $\mathcal{O}$.  We suspect that they are exactly the primes ramifying in $\mathcal{O}$; see Remark~\ref{rmk:intro} below.

Theorem~\ref{thm:main.global} is a consequence of the following
uniform ``local'' result. Throughout the paper, $\lri$ will denote a
compact discrete valuation ring of arbitrary characteristic and
residue field of characteristic $p$ and cardinality $q$. Thus, $\lri$
may, for instance, be a finite extension of the ring $\Zp$ of $p$-adic
integers (of characteristic zero) or a ring of formal power series of
the form $\Fq\llbracket T\rrbracket$ (of positive characteristic).
The \emph{$\lri$-ideal zeta function}
  $$\zeta^{\ideal\,\lri}_L (s)= \sum_{I \triangleleft L}|L:I|^{-s}$$
of an $\lri$-algebra $L$ of finite $\lri$-rank is defined as in
\eqref{def:ideal.zeta}, with $I$ ranging over the $\lri$-ideals of
$L$, viz.\ $(\ad L)$-invariant $\lri$-submodules of~$L$. Note that
every element $\mcL$ of $\mfL$ may, after tensoring over $\Z$ with
$\lri$, be considered a free and finitely generated $\lri$-Lie
algebra. Given an $\lri$-module~$R$, we write~$L(R)=L\otimes_\lri R$.

\begin{thm}\label{thm:main.local}
 Let $\boldsymbol{{\mcL}} = (\mcL_1,\dots,\mcL_g)$ be a family of
 elements of $\mfL$ and $\boldsymbol{f}=
 (f_1,\dots,f_g)\in\N^g$. There exists an explicit rational function
 $W^{\ideal}_{\boldsymbol{\mcL},\boldsymbol{f}}\in\Q(X,Y)$ such that
 the following holds:

 Let $\lri$ be a compact discrete valuation ring and
 $(\Lri_1,\dots,\Lri_g)$ be a family of finite unramified extensions
 of $\lri$ with inertia degrees~$(f_1,\dots,f_g)$. Consider the
 $\lri$-Lie algebra
 $$L = \mcL_1(\Lri_1)\times\dots \times \mcL_g(\Lri_g).$$ For every
 finite extension $\Lri$ of~$\lri$, of inertia degree $f$ over $\lri$,
 say, the $\Lri$-ideal zeta function of $L(\Lri)$ satisfies
 $$\zeta^{\ideal\,\Lri}_{L(\Lri)}(s) =
 W^{\ideal}_{\boldsymbol{\mcL},\boldsymbol{f}}(q^f,q^{-fs}).$$ The
 rational function $W^{\ideal}_{\boldsymbol{\mcL},\boldsymbol{f}}$
 satisfies the functional equation
\begin{equation}\label{equ:funeq.local.newer}
  W^{\ideal}_{\boldsymbol{\mcL},\boldsymbol{f}}(X^{-1},Y^{-1}) =
  (-1)^{N_0} X^{\binom{N_0}{2}} Y^{N_0+N_1}
  W^{\ideal}_{\boldsymbol{\mcL},\boldsymbol{f}}(X,Y),
 \end{equation}
 where
 \begin{equation*}
   N_0 = \mathrm{rk}_{\mfo} L = \sum_{i=1}^g{f_i} \rk_\Z(\mcL_i) \quad \textup{
     and } \quad N_1 = \mathrm{rk}_{\mfo} (L/Z(L)) = \sum_{i=1}^g{f_i} \rk_\Z(\mcL_i/Z(\mcL_i)).
 \end{equation*}
\end{thm}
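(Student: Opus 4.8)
The plan is to reduce Theorem~\ref{thm:main.local} to a statement about a single ``building block'' rational function and then to prove both the explicit formula and the functional equation at that level. First I would observe that the ideal zeta function of a direct product $L = \mcL_1(\Lri_1)\times\dots\times\mcL_g(\Lri_g)$ decomposes: an $\lri$-ideal of $L$ is \emph{not} merely a product of ideals of the factors, but after passing to completions and using the class-$2$ structure one can stratify the enumeration by the lattices $\Lambda$ that arise as the images of an ideal in the abelianizations $L/[L,L]$, together with a choice of lattice in $[L,L]$ containing the commutators forced by $\Lambda$. This is the standard Grunewald--Segal--Smith reduction: counting ideals becomes a two-step lattice count, first over sublattices $\Lambda$ of the (free) abelianized part, then over sublattices of the derived part sandwiched between $[\Lambda,\Lambda]$ and the whole. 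The product structure and the base extension by unramified $\Lri$ enter only through the explicit bilinear (commutator) form, which for the members of $\mfL$ is built out of the ``generic'' antisymmetric form on $\mff_{2,d}$, the mixed form on $\mfg_{d,d'}$, and the Heisenberg form, each tensored up along $\Lri_i/\lri$.

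Next I would package the resulting multiple sum as an evaluation of the generalized Igusa functions introduced in the paper (whose definition and self-reciprocity must be stated and proved in the earlier sections I am allowed to cite): the lattice count over $\Lambda$ contributes the usual Igusa-type factor coming from the Bruhat--Tits building of $\GL_n$ over $\Lri_i$ (a sum over cosets indexed by partitions, i.e.\ elementary divisor types), while the inner count over the derived lattice contributes a second, interlocked Igusa-type factor whose numerator data is governed by the ranks of the minors of the commutator matrix evaluated at the chosen $\Lambda$. The key point is that for Lie rings in $\mfL$ these minor ranks are \emph{combinatorially} determined — they depend only on the elementary divisor type of $\Lambda$, not on the prime or on fine arithmetic — which is exactly why one lands inside the class of generalized Igusa functions and obtains $W^{\ideal}_{\boldsymbol{\mcL},\boldsymbol{f}}(q^f,q^{-fs})$ with the rational function independent of $\Lri$. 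The substitution $X=q^f$, $Y=q^{-fs}$ then records that replacing $\lri$ by an unramified extension $\Lri$ of inertia degree $f$ multiplies all residue cardinalities by the $f$-th power, and the formula for $L(\Lri)$ follows by the same computation run over $\Lri$ in place of $\lri$.

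For the functional equation \eqref{equ:funeq.local.newer}, the strategy is to inherit it from the self-reciprocity of the generalized Igusa functions together with the classical $\GL_n$-building symmetry. Each of the two interlocked Igusa factors satisfies its own functional equation upon inverting $X$ and $Y$; the exponents combine. The outer factor (over $\Lambda \le \lri^{N_1}$, in the notation of the theorem with $N_1 = \rk_\lri(L/Z(L))$ the abelianized rank) contributes a sign $(-1)^{N_1}$ and monomial $X^{\binom{N_1}{2}}$ from the Weyl-group symmetry of the affine building of $\GL_{N_1}$, plus a $Y$-power equal to $N_1$ from the index weighting; the inner factor, which ranges over lattices in the rank-$(N_0-N_1)$ derived module $Z(L)$, contributes $(-1)^{N_0-N_1}$, $X^{\binom{N_0-N_1}{2}}$ together with cross-terms $X^{N_1(N_0-N_1)}$ coming from the coupling of the two stages (the derived lattice must contain $[\Lambda,\Lambda]$), and a $Y$-power equal to $N_0-N_1$. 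Adding: the signs multiply to $(-1)^{N_0}$, the $X$-exponents $\binom{N_1}{2}+\binom{N_0-N_1}{2}+N_1(N_0-N_1)=\binom{N_0}{2}$, and the $Y$-exponents $N_1 + (N_0-N_1)$ plus an extra $N_1$ from the weighting of the derived part by twice the covolume — giving $Y^{N_0+N_1}$ as claimed. The bookkeeping of these exponents, and in particular verifying that the coupling between the two stages produces exactly the cross-term needed for $\binom{N_0}{2}$, is where care is required.

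The main obstacle, I expect, is the middle step: showing that for \emph{every} member of $\mfL$ — and crucially for arbitrary direct products of base-extended members — the ranks of all minors of the commutator matrix, as a function of the elementary divisor type of $\Lambda$, fall into the precise combinatorial pattern that the generalized Igusa functions are designed to encode. For a single $\mff_{2,d}$ this is essentially the analysis of antisymmetric forms over a DVR; for $\mfg_{d,d'}$ and $\mfh_d$ it is comparable; but for a product one must understand how the block-diagonal commutator form interacts with a lattice $\Lambda$ that need \emph{not} respect the block decomposition, and argue that the relevant rank statistics still decompose appropriately. Establishing this uniform combinatorial control — rather than the bookkeeping of exponents in the functional equation, which is mechanical once the structure is in place — is the technical heart of the theorem, and is presumably where the bulk of the paper's earlier machinery on generalized Igusa functions is brought to bear.
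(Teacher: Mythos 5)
Your outline is in the right spirit --- reduce to a Grunewald--Segal--Smith two-step lattice count, package the result as evaluations of generalized Igusa functions, and inherit the functional equation from their self-reciprocity --- and this is indeed the architecture of the paper's proof, which stratifies the sum by Dyck words and the ``projection data'' of $\Lambda$ and proves Hypothesis~\ref{hypothesis} for the three defining families of $\mfL$. However, there are two concrete problems with your version.

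First, you twice write the inner lattice as sandwiched between $[\Lambda,\Lambda]$ and the derived subalgebra. For \emph{ideal} counting the correct constraint is $[\overline{\Lambda},L]\subseteq \mathrm M$, not $[\Lambda,\Lambda]\subseteq \mathrm M$; the latter is the condition for a \emph{subring} (containing $L'$). The two coincide only in degenerate situations, and the whole of Lemma~\ref{lem:free.lambda}, Lemma~\ref{lem:commutator.edt}, etc.\ (which compute the elementary divisor type of $[\overline{\Lambda},L]$, not merely a rank, from the projection data) hinges on this. Your framing in terms of ``ranks of minors of the commutator matrix'' is also looser than what the argument needs: the combinatorial object controlled by Hypothesis~\ref{hypothesis} is the full $\lri$-elementary divisor type of $[\overline{\Lambda},L]$ in $L'$, and for direct products the invariant of $\Lambda$ that determines it is the tuple of elementary divisor types of the projections of $\overline{\Lambda}$ onto the various direct summands of $L/L'$; the elementary divisor type of $\Lambda$ alone is not enough, and it is exactly the passage from ``$\Lambda$'' to its projections that makes products tractable.

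Second, your functional-equation argument posits a factorization of $\zeta^\ideal_L$ into an ``outer'' Igusa factor (from the abelianized count) and an ``inner'' Igusa factor (from the derived count), each satisfying its own functional equation, whose exponents then add via the identity $\binom{N_1}{2}+\binom{N_0-N_1}{2}+N_1(N_0-N_1)=\binom{N_0}{2}$. No such factorization exists: the two counts are genuinely coupled, and Theorem~\ref{thm:zeta.explicit} expresses $\zeta^\ideal_L$ as a \emph{sum} over Dyck words $w$ of products of Igusa-type pieces $D_{w,\rho}$, not as a product of two global factors. The arithmetic of your exponent bookkeeping lands on the right answer, but the mechanism by which the paper actually obtains the functional equation is different and cannot be skipped: one proves (Proposition~\ref{pro:dwfeq}) that each summand $D_{w,\rho}$ individually satisfies a functional equation, and then observes --- this is the crucial step --- that the symmetry factor is \emph{independent} of $w$ and $\rho$, so the sum inherits the same functional equation. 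Without that uniformity, inverting $(X,Y)$ in a sum of rational functions tells you nothing. Establishing the uniform symmetry factor requires the generalized Igusa functional equation of Theorem~\ref{thm:funeq} as input together with the specific forms of the numerical data in Definition~\ref{dfn:dwk}, and it also uses the constraint~\eqref{equ:cocenter} on $\sum n_if_i$ coming from Hypothesis~\ref{hypothesis} to turn $\sum n_if_i$ into $\rk_\lri(L/Z(L))$. Your proposal correctly identifies Hypothesis~\ref{hypothesis} as the technical heart but does not supply either the mechanism for verifying it on the families in $\mfL$, or the uniform-symmetry-factor argument that makes the functional equation actually go through.
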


Theorem~\ref{thm:main.global} is readily deduced from
Theorem~\ref{thm:main.local}. Indeed, let $\mcL$ be a nilpotent Lie
ring as in the statement of Theorem~\ref{thm:main.global}, and let
$\mcO$ be the ring of integers of a number field.  Suppose that the
rational prime $p$ is unramified in $\mcO$ and decomposes as $p \mcO =
\mathfrak{p}_1 \mathfrak{p}_2 \cdots \mathfrak{p}_g$, where the
$\mathfrak{p}_i$ are distinct prime ideals of $\mcO$ of inertia
degrees~$f_i$.  Then $\mcO \otimes_{\Z} \Z_p \simeq \mfO_1 \times
\cdots \times \mfO_g$, where each $\mfO_i / \Z_p$ is an unramified
extension of inertia degree~$f_i$.  Therefore,
$$ \mcL(\mcO \otimes_{\Z} \Z_p) \simeq \mcL(\mfO_1) \times \cdots
\times \mcL(\mfO_g).$$ Hence, by Theorem~\ref{thm:main.local} we
have
$$ \zeta^{\ideal}_{\mcL(\mcO),p}(s) = \zeta^{\ideal\;\Zp}_{\mcL(\mcO
  \otimes_{\Z} \Z_p)}(s) =
W^{\ideal}_{(\mcL,\dots,\mcL),(f_1,\dots,f_g)}(p,p^{-s})
$$ for an explicit rational function
$W^{\ideal}_{(\mcL,\dots,\mcL),(f_1,\dots,f_g)}\in \Q(X,Y)$.  Setting
$W^{\ideal}_{\mcL,\boldsymbol{f}} =
W^{\ideal}_{(\mcL,\dots,\mcL),(f_1,\dots,f_g)}$, we obtain
Theorem~\ref{thm:main.global}.  The functional equation of Theorem~\ref{thm:main.global} follows from that of Theorem~\ref{thm:main.local} since $n = \sum_{i = 1}^g f_i$ as $p$ is unramified in $\mathcal{O}$.

\begin{rem} 
  Our description of the rational function
  $W^{\ideal}_{\boldsymbol{\mcL},\boldsymbol{f}}$ is so explicit that
  one may, in principle, read off the (local) \emph{abscissa of
    convergence} $\alpha^{\triangleleft\,\Lri}_{L(\Lri)}$ of
  $\zeta^{\triangleleft\,\Lri}_{L(\Lri)}(s)$, viz.
$$\alpha^{\triangleleft\,\Lri}_{L(\Lri)} := \inf\left\{ \alpha\in
  \R_{>0} \mid \zeta^{\triangleleft\,\Lri}_{L(\Lri)}(s) \textup{
    converges on }\{ s\in\C \mid \Re(s) > \alpha\} \right\} \in
  \Q_{>0};$$ cf.\ Remark~\ref{rem:local.abs.con}.
  \end{rem}

\begin{rem} \label{rmk:intro}
 We emphasize that Theorem~\ref{thm:main.local} makes no restriction
 on the residue characteristic of $\lri$. In this regard it
 strengthens, for the class of Lie rings under consideration, the
 result \cite[Theorem~1.2]{Voll/19}, which establishes the functional
 equation~\eqref{equ:funeq.local.newer} for all $\lri$ whose residue
 characteristic avoids finitely many prime numbers;
 cf.\ \cite[Corollary~1.3]{Voll/19} and see also~\cite[Theorem~1.7]{LeeVoll/21}. In the global contexts of ideal
 zeta functions of rings of the form $\mcL(\Gri)$ for number rings
 $\Gri$, Theorem~\ref{thm:main.local} shows that the finitely many
 Euler factors for which the functional
 equation~\eqref{equ:funeq.local.newer} fails must be among those
 indexed by primes that ramify in $\Gri$.
 
 In~\cite[Conjecture~1.4]{SV1/15} it was suggested that a functional
 equation should hold for {\emph{all}} local factors
 $\zeta^{\ideal}_{\mathfrak{f}_{2,2}(\mcO),p}(s)$, where
 $\mathfrak{f}_{2,2}$ is the Heisenberg Lie ring and $\mcO$ is a
 number ring; if $p$ ramifies in $\mcO$, then the symmetry factor must
 be modified from that of~\eqref{equ:funeq.local.newer}.  Some cases
 of the conjecture were proved in~\cite[Corollary~3.13]{SV2/16}.
 There is computational evidence, due to T.~Bauer, that other Lie
 rings in the class $\mfL$ also exhibit the remarkable property of the
 local factors $\zeta^{\ideal}_{\mcL(\mcO),p}(s)$ at ramified primes
 $p$ being described by rational functions satisfying functional
 equations.  However, these local factors cannot be computed by the
 methods of this paper; see Remark~\ref{rem:ram}.  Bauer's
 computations, together with the results of this paper, suggest the
 following natural question: how do the local factors
 $\zeta^{\ideal}_{\mcL(\mcO),p}(s)$ behave at ramified primes, and how
 does the structure of $\mcL$ govern their behaviour?
 
 Another natural problem is to improve upon Definition~\ref{def:L} by
 giving a conceptual characterization of the class of Lie rings to
 which our method, or a mild generalization thereof, applies.  For
 instance, forthcoming work of T.~Bauer extends our argument to
 explicitly compute the ideal zeta functions of central products of
 finitely many copies of Lie rings in the class $\mathfrak{L}$.  By
 contrast, non-uniform examples such as those of~\cite{duS-ecII/01,
   Voll/04} provide a limit on the applicability of these methods.
\end{rem}

\subsubsection{Previous and related work}\label{subsubsec:previous}
Theorems~\ref{thm:main.global} and~\ref{thm:main.local} generalize and
unify several previously known results.
  \begin{enumerate} 
  \item The most classical may be the formula for the $\lri$-ideal
    zeta function
     \begin{equation}\label{equ:abelian}
       \zeta_{\lri^n}(s) := \zeta^{\ideal\,\lri}_{\lri^n}(s) =
       \prod_{i=1}^n\frac{1}{1-q^{-s+i-1}}
     \end{equation}  of the (abelian Lie) ring $\lri^n = \mfg_{0,n}(\lri) =
       \mfg_{n,0}(\lri)$; cf.\ \cite[Proposition~1.1]{GSS/88}.
     \item The ideal zeta functions of the so-called \emph{Grenham Lie
       rings} $\mfg_{1,d}$ were given in \cite[Theorem~5]{Voll/05}.
     \item Formulae for the ideal zeta functions of the free
       class-$2$-nilpotent Lie rings $\mff_{2,d}$ on $d$ generators
       are the main result of \cite{Voll/05a}.
   \item The paper \cite{SV1/15} contains formulae for all local
     factors of the ideal zeta functions of the Lie rings
     $\mff_{2,2}(\mcO) = \mfg_{1,1}(\mcO) = \mathfrak{h}_1(\mcO)$,
     i.e.\ the \emph{Heisenberg Lie ring} over an arbitrary number
     ring~$\mcO$, which are indexed by primes unramified in
     $\mcO$. The uniform nature of these functions had already been
     established in \cite[Theorem~3]{GSS/88}. Formulae for factors
     indexed by non-split primes are given in \cite{SV2/16}.
     \item 
     The ideal zeta functions of the Lie rings $\mathfrak{h}_d \times
     \mfo^r$ were computed in~\cite[Proposition~8.4]{GSS/88}, whereas
     for the direct products $\mathfrak{h}_d \times \cdots \times
     \mathfrak{h}_d$ they were computed in~\cite{Bauer/13}.
   \item The ideal zeta function of the Lie ring $\mfg_{2,2}$ was
     computed in \cite[Theorem~11.1]{Paajanen/08}.
 \end{enumerate}

 Some of the members of the family of Lie rings $\mfL$ have previously
 been studied in the context of related counting problems, each
 leading to a different class of zeta functions. We mention
 specifically four such classes: first, the \emph{subring zeta
   function} of a (class-$2$-nilpotent Lie) ring~$\mcL$, enumerating
 the finite index subrings of~$\mcL$; second, the \emph{proisomorphic
   zeta function} of~$G(\mcL)$, the finitely generated nilpotent group
 associated to $\mcL$ via the Mal'cev correspondence, enumerating the
 subgroups of finite index of $G(\mcL)$ whose profinite completions
 are isomorphic to that of $G(\mcL)$; third, the \emph{representation
   zeta function} of $G(\mcL)$, enumerating the twist-isoclasses of
 complex irreducible representations of $G(\mcL)$; fourth, the
 \emph{class number zeta function} of $G(\mcL)$, enumerating the class
 numbers (i.e.\ numbers of conjugacy classes) of congruence quotients
 of this group (see \cite{Lins1/19}).

 The subring zeta functions of the Grenham Lie rings $\mfg_{1,d}$ were
 computed in \cite{VollBLMS/06}. Those of the free class-$2$-nilpotent
 Lie rings $\mff_{2,d}$ are largely unknown, apart from $d=2$
 (\cite{GSS/88}) and $d=3$ (\cite[Theorem~2.16]{duSWoodward/08}, due
 to G.\ Taylor).  The proisomorphic zeta functions of the members of a
 combinatorially defined class of groups that includes the Grenham
 groups $G(\mfg_{1,d})$ were computed in \cite{BermanKlopschOnn/18},
 their normal zeta functions in \cite{Voll/20}.  Moreover,
 {\emph{all}} Euler factors of the proisomorphic zeta functions of
 $G(\mathfrak{f}_{2,d}(\mathcal{O}))$ and
 $G(\mathfrak{h}_d(\mathcal{O}))$, where $\mathcal{O}$ is an arbitrary
 number ring, as well as of the base extensions to $\mathcal{O}$ of
 the groups considered in~\cite{BermanKlopschOnn/18} and some other
 families of nilpotent groups of unbounded class, were computed
 in~\cite{BGS/22}.  The representation zeta functions of the free
 class-$2$-nilpotent groups $F_{2,d}(\mcO) =G(\mff_{2,d}(\mcO))$ were
 computed in \cite[Theorem~B]{StasinskiVoll/14}, those of the groups
 $G(\mfg_{d,d'}(\mcO))$ in \cite[Theorem~A]{Zordan/17}.  In these
 cases, not only is there a fine Euler decomposition, but the rational
 function realizing the fine Euler factors is independent of $\mcO$
 and of the prime.  The class number zeta functions of the groups
 $F_{2,d}(\mcO)$ and $G(\mfg_{d,d}(\mcO))$, which may be found in
 \cite[Corollary~1.5]{Lins2/20}, satisfy the same properties.
 
 Combinatorial structures similar to those employed in the present
 article were also used in~\cite{RV/19}. In that paper, they were used to
 produce explicit formulae for zeta functions enumerating conjugacy
 classes of the cographical groups defined in~\cite[\S3.4]{RV/19}.

\subsubsection{Methodology}\label{subsubsec:methodology}
 Our approach to computing the explicit rational functions mentioned
 in Theorems~\ref{thm:main.local} and \ref{thm:main.global} hinges on
 the following considerations. Fix a prime $p$ and a
 class-$2$-nilpotent Lie ring $\mcL$ and consider, for simplicity, the
 pro-$p$ completion $L=\mcL(\Zp)$ of~$\mcL$. Given a $\Zp$-sublattice
 $\Lambda\leq L$, set $\ol{\Lambda} := ({\Lambda}+L')/L'$ and
 $\Lambda':= \Lambda \cap L'$. Here we write $L'=[L,L]$ for the
 commutator subring of $L$. Clearly, $\Lambda$ is a $\Zp$-ideal of $L$
 if and only if $[\ol{\Lambda},L] \subseteq \Lambda'$. This allows us,
 for fixed $\ol{\Lambda}$, to reduce the problem of enumerating such
 $\Lambda'$\ to the problem of enumerating subgroups of the finite
 abelian $p$-group $L'/ [\ol{\Lambda},L]$. The isomorphism type of the
 latter is given by the ($\Zp$)-\emph{elementary divisor type} of
 $[\ol{\Lambda},L]$ in~$L'$, viz.\ the partition $\lambda(\Lambda) =
 (\lambda_1,\dots,\lambda_{c})$ with the property that
\begin{equation*}
L^\prime / [\ol{\Lambda}, {L}] \simeq \Zp / (p^{\lambda_1})\times
\cdots \times \Zp / (p^{\lambda_{c}}).
\end{equation*}
 For general Lie rings $\mcL$, controlling this type for varying
 $\Lambda$ is a hard problem that may be dealt with by studying
 suitably defined $p$-adic integrals with sophisticated tools from
 algebraic geometry, including Hironaka's resolution of singularities
 in characteristic zero.

 If, however, $\mcL$ is an element of the class $\mfL$, then the
 elementary divisor type of $[\overline{\Lambda}, L]$ is determined,
 in a complicated but \emph{combinatorial} manner, by so-called
 ``projection data''; cf.\ Definition~\ref{dfn:projection.data}.
 These are the respective elementary divisor types of the projections
 of $\ol{\Lambda}$ onto various direct summands of $L/L'$.  The
 technical tool we use to keep track of the resulting infinitude of
 finite enumerations are the {\emph{generalized Igusa functions}}
 introduced in Section~\ref{sec:general.igusa}. An intrinsic advantage
 of this combinatorial point of view over the general (and typically
 immensely more powerful) algebro-geometric approach is that,
 structurally, $\Zp$ only enters as a compact discrete valuation
 ring. The effect of passage to various other such local rings,
 including those of positive characteristic, is therefore easy to
 control.

 For an informal overview of the combinatorial aspects of our approach
 to counting $\lri$-ideals, see Section~\ref{subsec:overview}.

 \subsection{Counting ideals with generalized Igusa
   functions}\label{subsec:intro.gen.igusa}
Our key to Theorem~\ref{thm:main.local} is the systematic deployment
of a new class of combinatorially defined multivariate rational
functions, which we call \emph{generalized Igusa functions}. Expecting
that they will be of interest independently of questions pertaining to
ideal growth in rings, we explain them here separately.

Generalized Igusa functions interpolate between two well-used classes
of rational functions:
  \begin{enumerate}
  \item A function we refer to as the \emph{Igusa zeta function of degree
      $n$} plays a key role in numerous previous computations (for
    instance \cite{Voll/05, VollBLMS/06, Voll/05a, Paajanen/08,
      StasinskiVoll/14, SV1/15, SV2/16, CSV/18, Voll/20}):
  $$I_n(Y;X_1,\dots,X_n) = \sum_{I\subseteq \{1,\dots,n\}}
  \binom{n}{I}_Y \prod_{i\in I}\frac{X_i}{1-X_i} \in
  \Q(Y,X_1,\dots,X_n).$$ Here, $\binom{n}{I}_Y$ denotes the Gaussian
  multinomial; see \eqref{def:gaussian.multi}. For instance,
  \begin{equation} \label{equ:on.igusa.identity}
  \zeta_{\lri^n}(s) =
  I_n(q^{-1};((q^{n-i-s})^{i})_{i=1}^n);
  \end{equation}
   cf.\ \eqref{equ:abelian} and \cite[Example~2.20]{Voll/11}.

\item In \cite{SV1/15}, the \emph{weak order zeta function}
  \begin{equation}\label{equ:wozeta}I^{\textup{wo}}_n((X_I)_{I\in \mcP([n])\setminus\{\varnothing\}}) =
  \sum_{I_1\subsetneq \dots \subsetneq I_l \subseteq [n]}
  \prod_{j=1}^l \frac{X_{I_j}}{1-X_{I_j}} \in \Q((X_I)_{I\in
    \mcP([n])\setminus\{\varnothing\}})\end{equation} played a decisive role;
  cf.\ \cite[Definition~2.9]{SV1/15}.
  \end{enumerate}

  The main protagonist of Section~\ref{sec:general.igusa} is the
  \emph{generalized Igusa function}
  $I^{\textup{wo}}_{\underline{n}}(Y_1,\dots,Y_m;\bfX)$, a rational
  function associated to a composition
  $\underline{n}=(n_1,\dots,n_m)$, with variables $\bfX$ indexed by
  the subwords of the word $a_1^{n_1}\dots a_m^{n_m}$ in ``letters''
  $a_1,\dots,a_m$; cf.\ Definition~\ref{def:igusa.wo} for details. It
  interpolates between the two classes of rational functions just
  mentioned: the Igusa function of degree $n$ for the trivial
  composition $(n)$ and the weak order zeta function for the all-one
  composition $(1,\dots,1)$ of $n$; see Example~\ref{exm:gen.igusa}.

  \begin{rem}
    Igusa functions are not to be confused with, but are related to, a
    class of $p$-adic integrals known as Igusa's local zeta function;
    cf.~\cite{Denef/91}. For a detailed explanation of the connection
    between $I_n$ and work of Igusa, as well as further
    generalizations and applications, see \cite{KlopschVoll/09}.
  \end{rem}

  \subsection{Organization and notation}

  \subsubsection{} In Section~\ref{sec:prelim} we recall a number of
  preliminary notions and results used to enumerate lattices and
  finite abelian $p$-groups. In Section~\ref{sec:general.igusa} we
  define the generalized Igusa functions and prove that they satisfy
  functional equations. In Section~\ref{sec:main.results}, these new
  functions are put to use to compute a general formula
  (cf.\ Theorem~\ref{thm:zeta.explicit}) for local ideal zeta
  functions of Lie rings satisfying the general combinatorial
  Hypothesis~\ref{hypothesis}. In Section~\ref{sec:application} we
  verify that the members of the class $\mfL$
  (cf.\ Definition~\ref{def:L}) satisfy Hypothesis~\ref{hypothesis},
  complete the proof of Theorem~\ref{thm:main.local}, and attend
  to a number of special cases.
  
  \subsubsection{} We write $\N=\{1,2,\dots\}$ and, for a subset
  $X\subseteq \N$, set $X_0=X \cup \{0\}$. For $m,n\in\N_0$ we denote
  $[n]=\{1,\dots,n\}$, $[n,m]=\{n,n+1,\dots,m\}$, and
  $(n,m)=\{n+1,\dots,m-1\}$.  Given a finite subset $J\subseteq \N_0$,
  we write $J=\{j_1,\dots,j_r\}_<$ to imply that $j_1 < \dots <
  j_r$. We write $J-n$ for the set $\{j-n \mid j\in J\}$. The power
  set of a set $S$ is denoted $\mcP(S)$.

  A \emph{composition of $n$ with $r$ parts} is a sequence
  $(\lambda_1,\dots,\lambda_r)\in\N_0^r$ such that
  $\sum_{i=1}^r \lambda_i = n$. A \emph{partition of $n$ with $r$
    parts} is a composition of $n$ with $r$ parts such that
  $\lambda_1\geq \dots \geq \lambda_r$. We occasionally obtain
  partitions from multisets by arranging their elements in
  non-ascending order. Our notation for the dual partition of a
  partition $\lambda$ is~$\lambda'$. Given partitions
  $\mu=(\mu_1,\dots,\mu_c)$ and $\lambda=(\lambda_1,\dots,\lambda_c)$
  we write $\mu\leq\lambda$ if $\mu_i\leq \lambda_i$ for all
  $i\in[c]$, i.e.\ if the Young diagram of $\mu$ is included in the
  Young diagram of $\lambda$.

 We write $t=q^{-s}$, where $s$ denotes a complex variable.

\section{Preliminaries}\label{sec:prelim}

In this preliminary section, we collect some fundamental notions.

\subsection{Gaussian binomials and classical Igusa functions}\label{subsec:classical.igusa}
For a variable $Y$ and integers $a,b\in\N_0$ with $a\geq b$, the
associated \emph{Gaussian binomial} is
\begin{equation*}\label{def:gauss}
  \binom{a}{b}_Y = \frac{\prod_{i=a-b+1}^a (1-Y^i)}{\prod_{i=1}^b
    (1-Y^i)}\in \Z[Y].
\end{equation*}
A simple computation shows that
\begin{equation}\label{eq:funeq.gauss}
\binom{a}{b}_{Y^{-1}} = Y^{b(b-a)} \binom{a}{b}_Y.
\end{equation}
Given $n\in\N$ and a subset $J = \{j_1,\dots,j_r\}_< \subseteq [n-1]$,
the associated \emph{Gaussian multinomial} is defined as
\begin{equation}\label{def:gaussian.multi} \binom{n}{J}_Y = \binom{n}{j_{r}}_Y
  \binom{j_{r}}{j_{r-1}}_Y \cdots \binom{j_2}{j_1}_Y \in \Z [Y].
  \end{equation}

We omit the proof of the following simple lemma, which is similar to
\cite[Lemma~2.14]{SV1/15}.
\begin{lem}\label{lem:binom}
  Let $n\in\N$ and $P = \{p_1,\dots,p_r\}_< \subseteq J \subseteq
  [n-1]$. Then
  $$\binom{n}{J}_{Y} = \binom{n}{P}_Y \prod_{j=1}^r
  \binom{p_j-p_{j-1}}{J \cap (p_{j-1},p_j)-p_{j-1}}_Y.$$
\end{lem}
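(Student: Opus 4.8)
The plan is to recognise $\binom{n}{J}_Y$ as a $Y$-analogue of a multinomial coefficient and to read the identity off from the evident behaviour of multinomial coefficients under refinement of compositions. For $m\in\N_0$ write $\Phi_m=\Phi_m(Y):=\prod_{i=1}^{m}(1-Y^i)$, so that $\binom{a}{b}_Y=\Phi_a/(\Phi_b\Phi_{a-b})$ as elements of $\Q(Y)$. First I would record the elementary telescoping identity: for $J=\{j_1,\dots,j_s\}_<\subseteq[n-1]$, on setting $j_0:=0$ and $j_{s+1}:=n$,
\begin{equation}\label{equ:binom.factorial}
  \binom{n}{J}_Y=\frac{\Phi_n}{\prod_{i=1}^{s+1}\Phi_{j_i-j_{i-1}}}\,,
\end{equation}
which follows at once from \eqref{def:gaussian.multi} by telescoping the internal factors $\Phi_{j_1},\dots,\Phi_{j_s}$. (The case $s=2$ is the two-term identity $\binom{a}{b}_Y\binom{b}{c}_Y=\binom{a}{c}_Y\binom{a-c}{b-c}_Y$, valid for $0\le c\le b\le a$, which is the only input beyond the definition.)

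With \eqref{equ:binom.factorial} in hand the proof is a cancellation. Write $P=\{p_1,\dots,p_r\}_<$ and put $p_0:=0$, $p_{r+1}:=n$; the half-open intervals $(p_{j-1},p_j]$ for $j\in[r+1]$ partition $\{1,\dots,n\}$, and inside the $j$-th one the elements of $J$ lying in $(p_{j-1},p_j)$ subdivide it into the blocks recorded — after subtracting $p_{j-1}$ — by the composition underlying $\binom{p_j-p_{j-1}}{(J\cap(p_{j-1},p_j))-p_{j-1}}_Y$. Concatenating these blocks over $j$ gives exactly the composition $\mu=(\mu_1,\dots,\mu_{s+1})$ of $n$ underlying $\binom{n}{J}_Y$. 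Now substitute \eqref{equ:binom.factorial} into $\binom{n}{P}_Y$ and into each refined factor: each $\Phi_{p_j-p_{j-1}}$ then occurs once in a numerator (from the $j$-th refined factor) and once in a denominator (from $\binom{n}{P}_Y$), and after cancelling all $r+1$ of them the right-hand side collapses to $\Phi_n/\prod_i\Phi_{\mu_i}$, which by \eqref{equ:binom.factorial} is $\binom{n}{J}_Y$. That is the whole argument.

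A slightly more hands-on alternative is induction on $r=|P|$ using only the two-term identity. One peels off $p_r$: applying the two-term identity repeatedly to the elements of $J$ above $p_r$ produces the three-term split
\[
  \binom{n}{J}_Y=\binom{n}{p_r}_Y\,\binom{p_r}{J\cap[p_r-1]}_Y\,\binom{n-p_r}{(J\cap[p_r+1,n-1])-p_r}_Y,
\]
and one then applies the inductive hypothesis to the middle factor, noting $P\setminus\{p_r\}\subseteq J\cap[p_r-1]$ and $\binom{n}{p_r}_Y\binom{p_r}{P\setminus\{p_r\}}_Y=\binom{n}{P}_Y$. In either approach there is no real obstacle; the only step needing care — and it is hardly an obstacle — is the bookkeeping of the index shifts $-p_{j-1}$ and of the top interval $(p_r,n]$, which is precisely why the statement can be quoted without a detailed proof.
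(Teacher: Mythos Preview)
The paper omits the proof entirely, pointing only to the analogous \cite[Lemma~2.14]{SV1/15}, so there is no argument of the authors' to compare against. Your approach via the $q$-factorial identity $\binom{n}{J}_Y=\Phi_n/\prod_{i=1}^{s+1}\Phi_{j_i-j_{i-1}}$ (with $j_0=0$, $j_{s+1}=n$) is correct and is precisely the routine telescoping the authors had in mind.

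One point is worth making explicit. Your cancellation actually establishes the identity with the product taken over $j\in[r+1]$, using $p_{r+1}:=n$, rather than over $j\in[r]$ as printed. The top factor $\binom{n-p_r}{(J\cap(p_r,n))-p_r}_Y$ is genuinely needed whenever $J\cap(p_r,n)\neq\varnothing$; for instance, with $n=4$, $J=\{1,2,3\}$, $P=\{2\}$ the stated version gives $\binom{4}{2}_Y\binom{2}{1}_Y\neq\binom{4}{\{1,2,3\}}_Y$. You handle this correctly by working with all $r+1$ intervals, and the paper's own application of the lemma at~\eqref{equ:beta.binomial} (where $\Rho_{ir}=n_i$ plays the role of $p_{r+1}=n$) confirms that this is the intended reading.
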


\begin{dfn}(\cite[Definition~2.5]{SV1/15})\label{def:igusa} 
  Let $\wo\in\N$. Given variables $Y$ and $\bfX=(X_1,\dots,X_\wo)$, we
  define the \emph{Igusa functions of degree $n$}
\begin{align*}
I_\wo(Y; \bfX)& =
\frac{1}{1-X_\wo}\sum_{I\subseteq[\wo -1]} \binom{\wo}{I}_{Y} \prod_{i\in
  I}\frac{X_i}{1-X_i} = \sum_{I \subseteq [\wo]}\binom{\wo}{I}_{Y} \prod_{i\in
  I}\frac{X_i}{1-X_i} \;\in\Q(Y,X_1,\dots,X_\wo),\\ 
 I_\wo^\circ (Y; \bfX) & =
\frac{X_\wo}{1-X_\wo}\sum_{I\subseteq[\wo-1]} \binom{\wo}{I}_Y \prod_{i\in
  I}\frac{X_i}{1-X_i}\;\in\Q(Y,X_1,\dots,X_\wo).
\end{align*}
\end{dfn}
An important feature of these functions is that they satisfy a
functional equation upon inversion of the variables; it is immediate
from~\cite[Theorem~4]{Voll/05} that, for all $\wo \in \N$,
\begin{align}
  I_\wo(Y^{-1};\bfX^{-1})& = (-1)^\wo
                           X_{\wo}Y^{-\binom{\wo}{2}}I_\wo(Y;\bfX)\label{eq:funeq.igusa},
  \\ I^\circ_\wo(Y^{-1};\bfX^{-1})& =
                                    (-1)^\wo X_\wo^{-1}Y^{-\binom{\wo}{2}}I^\circ_\wo(Y;\bfX).\label{eq:funeq.igusa.circ}
\end{align}

\subsection{Subgroups of finite abelian groups, Birkhoff's formula,
  and Dyck words}\label{subsec:prelim.dyck}
It is well-known that, given a pair of partitions $\mu\leq \lambda$
and a prime $p$, the number $a(\lambda,\mu;p)$ of finite abelian
$p$-groups of isomorphism type $\mu$ contained in a fixed finite
abelian $p$-group of isomorphism type $\lambda$ is given by a
polynomial in $p$. More precisely, set
\begin{equation}\label{def:birkhoff.poly}
  \alpha(\lambda,\mu;Y) = \prod_{k\geq 1}
  Y^{\mu_k'(\lambda_k'-\mu_k')}\binom{\lambda_k'-\mu_{k+1}'}{\lambda_k'-\mu_k'}_{Y^{-1}}
  \in\Q[Y],
\end{equation}
{where $\lambda'$ and $\mu'$ are the dual partitions
of  $\lambda$ and $\mu$, respectively.}  Then, by a result going back to
work of Birkhoff \cite{Birkhoff/35}, $a(\lambda,\mu;p) =
\alpha(\lambda,\mu;p)$ (see \cite[Lemma~1.4.1]{Butler/94}; cf.\ also
\cite{Dyubyuk/48, Delsarte/48, Yeh/48}).

In practical applications invoking infinitely many instances of this
formula, as in \cite{SV1/15, LeeVoll/18}, it proved advantageous to
sort pairs of partitions by their ``overlap types'' indexed by Dyck
words, as we now recall.

Let~$c\in\N$. A \emph{Dyck word of length $2c$} is a word
$$w=\bfz^{L_1}\bfone^{M_1}\bfz^{L_2
  -L_1}\bfone^{M_2-M_1}\cdots\bfz^{L_r-L_{r-1}}\bfone^{M_r-M_{r-1}}$$
in letters $\bfo$ and $\bfz$, both occurring $c$ times each (hence
$L_r=M_r=c$), and, crucially, no initial segment of $w$ contains more
ones than zeroes (or, equivalently, $M_i \leq L_i$ for all
$i\in[r]$). Here, both the $L_i$ and $M_i$ are assumed to be
positive. Below, we will use the notational conventions $M_0=L_0=0$
and $L_{r+1}=L_r=c$, $M_{r+1}=M_r=c$. We write $\mcD_{2c}$ for the set
of all Dyck words of length $2c$. See \cite[Section~2.4]{SV1/15} or
\cite[Example~6.6.6]{Stanley/99} for further details on Dyck words.

We say that two partitions $\lambda$ and $\mu$, both with $c$ parts
and satisfying $\mu\leq \lambda$, have \emph{overlap type
  $w\in\mcD_{2c}$}, written $w(\lambda,\mu)=w$, if
\begin{multline} \label{mult:limi} \lambda_1 \geq \cdots \geq
  \lambda_{L_1} \geq \mu_1 \geq \cdots \geq \mu_{M_1} > \lambda_{L_1 +
    1} \geq \cdots \geq \lambda_{L_2} \geq \mu_{M_1 + 1} \geq \cdots
  \geq \mu_{M_2} > \cdots \\ > \lambda_{L_{r-1}+1} \geq \cdots \geq
  \lambda_c \geq \mu_{M_{r-1} + 1} \geq \cdots \geq \mu_c.
\end{multline}
In Definition~\ref{dfn:overlaps} we slightly modify this definition to
suit the specific needs of this paper.

\subsection{Gaussian multinomials and symmetric groups}

In Section~\ref{sec:general.igusa}, the following Coxeter group
theoretic interpretation of the Gaussian
multinomials will be useful. Recall that the symmetric group $W=S_n$
of degree $n$ is a Coxeter group, with Coxeter generating system
$S=(s_1,\dots,s_{n-1})$, where $s_i=(i\,i+1)$ denotes the standard
transposition. The \emph{Coxeter length} $\ell(w)$ of an element $w\in
S_n$ is the length of a shortest word for $w$ with elements
from~$S$. We define the (\emph{right}) \emph{descent set} $\Des(w) =
\{ i\in[n-1] \mid \ell(w s_i) < \ell(w) \}$. It is well-known
(\cite[Proposition~1.7.1]{Stanley/12}) that the Gaussian multinomials
\eqref{def:gaussian.multi} satisfy
\begin{equation}\label{eq:coxlength}
  \binom{n}{J}_Y = \sum_{w\in S_n, \, \Des(w)\subseteq J} Y^{\ell(w)}.
\end{equation}
Let $w_0$ denote the unique $\ell$-longest element in $S_n$, of length
$\ell(w_0)=\binom{n}{2}$. Then, for all $w\in S_n$,
\begin{equation}
  \ell(w w_0) = \ell(w_0)-\ell(w), \quad \quad
  \Des(w w_0)  =[n-1]\setminus \Des(w); \label{eq:cox.identities}
\end{equation}
cf.\ \cite[Section~1.8]{Humphreys/90}.

\subsection{A note on ramification}
Let $\mfo$ be a compact discrete valuation ring of arbitrary
characteristic.  Let $\mathfrak{m}$ denote the maximal ideal of $\mfo$
and let $\pi \in \mfo$ be a uniformizer, i.e.\ any element such that
$\mathfrak{m} = \pi \mfo$.  Let $\mfO$ be a finite extension of
$\mfo$, with maximal ideal $\mathfrak{M}$ and uniformizer $\Pi$.  Let
$f = [\mfO / \mathfrak{M} : \mfo / \mathfrak{m}]$ be the inertia
degree of the extension $\mfO / \mfo$, and let $e$ be its ramification
index; this means that $\pi \mfO = \mathfrak{M}^e$.  We will need the
following standard fact.

\begin{lem} \label{lem:oedt.to.zp}
Let $\mfO$ be a finite extension of $\mfo$ with ramification index $e$
and inertia degree~$f$.  Let 
$\tau \in \N_0$.  Suppose that $\tau = g e + h$, where $g \in \N_0$ and
$h \in [e - 1]_0$.  Then the following isomorphism of $\mfo$-modules
holds:
$$ \mfO / \mathfrak{M}^\tau \simeq \left( \mfo / \mathfrak{m}^{g+1}
\right)^{hf} \times \left( \mfo / \mathfrak{m}^g \right)^{(e-h)f}.$$ In
particular, if $\mfO / \mfo$ is unramified (i.e.~$e = 1$), then $\mfO
/ \mathfrak{M}^\tau \simeq (\mfo / \mathfrak{m}^\tau)^f$ as $\mfo$-modules.
\end{lem}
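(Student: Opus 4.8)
The plan is to compute the $\mfo$-module structure of $\mfO/\mathfrak{M}^\tau$ directly from the filtration by powers of $\mathfrak{M}$, using the ramification data $\pi\mfO = \mathfrak{M}^e$. First I would record that $\mfO$ is a free $\mfo$-module of rank $ef$, since $\mfo$ is a compact discrete valuation ring and $\mfO/\mfo$ is finite of inertia degree $f$ and ramification index $e$; in particular $\mfO/\mathfrak{M}^\tau$ is a finitely generated torsion $\mfo$-module, so by the structure theorem it is a product of cyclic modules $\mfo/\mathfrak{m}^{a_i}$, and it suffices to identify the multiset of exponents $\{a_i\}$. These are determined by the $\mfo/\mathfrak{m}$-dimensions of the quotients $\mathfrak{m}^{j}(\mfO/\mathfrak{M}^\tau)\big/\mathfrak{m}^{j+1}(\mfO/\mathfrak{M}^\tau)$ for $j \geq 0$, equivalently by the Hilbert function $j \mapsto \dim_{\mfo/\mathfrak{m}} \mfO/(\mathfrak{m}^{j}\mfO + \mathfrak{M}^\tau)$.

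Next I would translate everything into the $\mathfrak{M}$-adic filtration: since $\mathfrak{m}^j\mfO = \pi^j\mfO = \mathfrak{M}^{ej}$, we have $\mathfrak{m}^j\mfO + \mathfrak{M}^\tau = \mathfrak{M}^{\min(ej,\tau)}$, so
\[
\dim_{\mfo/\mathfrak{m}} \mfO/(\mathfrak{m}^j\mfO + \mathfrak{M}^\tau) = f\cdot\min(ej,\tau),
\]
because each successive quotient $\mathfrak{M}^i/\mathfrak{M}^{i+1}$ is an $\mfO/\mathfrak{M}$-vector space of dimension $1$, hence an $\mfo/\mathfrak{m}$-vector space of dimension $f$. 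From this Hilbert function one reads off, for each $j\geq 0$, the number of cyclic summands $\mfo/\mathfrak{m}^{a}$ with $a > j$: it equals $\big(f\cdot\min(e(j+1),\tau) - f\cdot\min(ej,\tau)\big)/f = \min(e(j+1),\tau) - \min(ej,\tau)$. Writing $\tau = ge + h$ with $h\in[e-1]_0$, this difference equals $e$ for $j < g$, equals $h$ for $j = g$, and equals $0$ for $j > g$. Hence there are exactly $h$ summands with exponent $>g$ (and all of these have exponent exactly $g+1$, since there are none with exponent $>g+1$), and $e$ summands with exponent $>g-1$; so $e-h$ summands have exponent exactly $g$. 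Multiplying by $f$ (each $\mfO/\mathfrak{M}$-line contributing an $f$-fold multiplicity, or more carefully: the above count is the count of $\mathfrak{M}$-adic generators, and passing from $\mfO$-module to $\mfo$-module structure multiplies each multiplicity by $f$ while leaving the exponents $g+1$, $g$ unchanged, since $\mathfrak{m}^a$ and $\mathfrak{M}^{ea}$ cut out the same thing), we obtain $hf$ summands isomorphic to $\mfo/\mathfrak{m}^{g+1}$ and $(e-h)f$ summands isomorphic to $\mfo/\mathfrak{m}^g$, which is the claimed isomorphism. The unramified case $e=1$ forces $h=0$ and $g=\tau$, giving $\mfO/\mathfrak{M}^\tau \simeq (\mfo/\mathfrak{m}^\tau)^f$.

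The only genuinely delicate point is bookkeeping the passage between the $\mfO$-module filtration (by powers of $\mathfrak{M}$) and the $\mfo$-module invariants (exponents of $\mathfrak{m}$), i.e.\ keeping track of the factor $f$ in the right place and not conflating $\mathfrak{M}$-adic length with $\mathfrak{m}$-adic length; I expect this to be the main obstacle, though it is routine once one fixes the Hilbert-function formulation above. An alternative, even shorter route avoids this entirely: choose an $\mfo$-basis of $\mfO$ adapted to the filtration—e.g.\ $\{\omega_k \Pi^i : 1\leq k\leq f,\ 0\leq i\leq e-1\}$ where $\omega_1,\dots,\omega_f$ lift an $\mfo/\mathfrak{m}$-basis of $\mfO/\mathfrak{M}$—and observe that under this basis $\mathfrak{M}^\tau = \pi^{g+1}\mfO \cdot (\text{span of }\Pi^i, i<h) \oplus \pi^g\mfO\cdot(\text{span of }\Pi^i, i\geq h)$, since $\Pi^i \in \mathfrak{M}^\tau$ iff $ge + i \geq \tau$ iff $i\geq h$, while $\Pi^i\cdot\pi = \Pi^{i+e}$ always lies in the appropriate piece; this exhibits $\mfO/\mathfrak{M}^\tau$ directly as $(\mfo/\mathfrak{m}^{g+1})^{hf}\times(\mfo/\mathfrak{m}^{g})^{(e-h)f}$.
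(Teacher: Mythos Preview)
Your ``alternative, even shorter route'' at the end is precisely the paper's proof: pick an $\mfo$-basis $\{\beta_k\Pi^j : k\in[f],\ j\in[e-1]_0\}$ of $\mfO$ and observe that $\mathfrak{M}^\tau$ is the $\mfo$-span of $\{\pi^{g+1}\beta_k\Pi^j : j<h\}\cup\{\pi^{g}\beta_k\Pi^j : j\geq h\}$. So on that route you match the paper exactly.

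Your main (Hilbert-function) approach is a legitimate alternative and would also work, but the execution contains a bookkeeping slip. From $\dim_{\mfo/\mathfrak{m}}\mfO/(\mathfrak{m}^j\mfO+\mathfrak{M}^\tau)=f\cdot\min(ej,\tau)$ one reads off directly that the number of cyclic $\mfo$-summands with exponent $>j$ is
\[
f\bigl(\min(e(j+1),\tau)-\min(ej,\tau)\bigr),
\]
which already gives $hf$ summands of exponent $g+1$ and $(e-h)f$ of exponent $g$. Your division by $f$ (to get $\min(e(j+1),\tau)-\min(ej,\tau)$) is unjustified, and the later ``multiply by $f$'' step, with its talk of ``$\mathfrak{M}$-adic generators'' and ``passing from $\mfO$-module to $\mfo$-module structure'', is a confused patch: you never left the $\mfo$-module category, so there is nothing to pass back from. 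Drop the $/f$ and the compensating paragraph, and the Hilbert-function argument is clean.
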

\begin{proof}
Let $\beta_1, \dots, \beta_f \in \mfO$ be a collection of elements
whose reductions modulo $\mathfrak{M}$ constitute an $\mfo /
\mathfrak{m} $-basis of the residue field $\mfO / \mathfrak{M}$.  The set $\{ \beta_i \Pi^j \mid i \in [f], j \in [e-1]_0 \}$
provides a basis for $\mfO$ as an $\mfo$-module; see, for instance,
the proof of~\cite[Proposition II.6.8]{Neukirch/99}.  Now it is clear
that $\mathfrak{M}^\tau = \Pi^\tau \mfO$ is the $\mfo$-linear span of
the set
\begin{equation*}
  \{ \pi^{g+1} \beta_i \Pi^j \mid i \in [f], j \in [0, h-1] \} \cup \{
  \pi^g \beta_i \Pi^j \mid i \in [f], j \in [h, e-1] \}. \qedhere
\end{equation*} 
\end{proof}

\begin{dfn} \label{dfn:dictionary}
For $\tau \in \N_0$ and $e, f \in \N$, let $\{ \tau\}_{e,f} = \{
(g+1)^{(hf)},g^{((e-h)f)}\}$ be the $ef$-element multiset consisting
of the element $g+1$ with multiplicity $hf$ and the element $g$ with
multiplicity $(e-h)f$, where $\tau = ge + h$ and $h \in [e-1]_0$,
as in Lemma~\ref{lem:oedt.to.zp}.
\end{dfn}

\section{Generalized Igusa functions}\label{sec:general.igusa}

In Section~\ref{subsec:gen.igusa.funeq} we introduce generalized Igusa
functions and prove that they satisfy functional equations. In
Section~\ref{subsec:weak.igusa} we record an identity involving weak
order zeta functions, motivated by our applications of Igusa functions
in ideal growth in Section~\ref{sec:application}.

\subsection{Generalized Igusa functions and their functional equations}\label{subsec:gen.igusa.funeq}
Let $\underline{n}=(n_1,\dots,n_m)$ be a composition of $N =
\sum_{i=1}^m n_i$ with $m$ parts.  Consider the poset
$C_{\underline{n}}$ of subwords of the word
$v_{\underline{n}}:=a_1^{n_1}a_2^{n_2}\dots a_m^{n_m}$ in ``letters''
$a_1,a_2,\dots,a_m$, each occurring with respective
multiplicity~$n_i$. This poset is naturally isomorphic to the
lattice $$C_{n_1}\times\dots \times C_{n_m},$$ the product of the
chains of lengths $n_i$ with the product order, which we denote
by~``$\leq$''. We write $\widehat{1}=v_{\underline{n}}$ and
$\widehat{0}$ for the empty word.

We denote by $\WO_{\underline{n}}$ the chain (or order) complex of
$C_{\underline{n}}$.  An element $V\in \WO_{\underline{n}}$ is a
(possibly empty) chain, or flag, of non-empty subwords of
$v_{\underline{n}}$, of the form $V=\{v_1 < \dots < v_t\}$. On
$\WO_{\underline{n}}$ we consider the partial order defined by
refinement of flags, also denoted by~``$\leq$''. Consider the natural
map
\begin{align*}
  \underline\pi:C_{\underline{n}} &\rarr
               [n_1]_0\times\cdots\times[n_m]_0,\\ v =
               a_1^{\alpha_1}\dots a_m^{\alpha_m} &\mapsto
               (\alpha_1,\dots,\alpha_m)=:(\pi_{1}(v),\dots,\pi_{m}(v)).
  \end{align*}
{The degree of the word  $v =
               a_1^{\alpha_1}\dots a_m^{\alpha_m}$ is $|v|:=\sum_{i=1}^m \alpha_i$.}
  \begin{dfn}
    We consider the induced morphism of posets
\begin{align*}
 \underline \varphi:\WO_{\underline{n}} &\to \prod_{i=1}^m
  \mathcal P ([n_i-1]),\\ V = \{v_1 < \dots < v_t\} &\mapsto \left(
  \left\{\pi_{i}(v_j) \mid j\in[t] \right\} \cap [n_i-1] \right)_{i=1}^m
  =: \left( \phi_{i}(V)\right)_{i=1}^m.
\end{align*} 
 We say that
$V\in \WO_{\underline{n}}$ has \emph{full projections} if
$$ \underline\varphi (V) = ([n_1-1
  ], \dots, [n_m-1])=:K.$$
\end{dfn}

\begin{center}
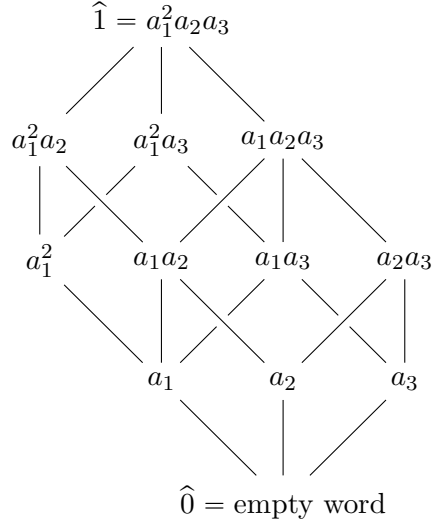

\begin{tikzpicture}[scale=0.8]
  \node (max) at (-2,6) {$\widehat{1}=a_1^2a_2a_3$};
  \node (h) at (-2,4) {$a_1^2a_3$};
  \node (m) at (0,4) {$ a_1a_2a_3$};
  \node (i) at (-4,2) {$a_1^2$};
  \node (ii) at (-4,4) {$a_1^2a_2$};
  \node (a) at (-2,2) {$a_1a_2$};
  \node (b) at (0,2) {$a_1a_3$};
  \node (c) at (2,2) {$a_2a_3$};
  \node (d) at (-2,0) {$a_1$};
  \node (e) at (0,0) {$a_2$};
  \node (f) at (2,0) {$a_3$};
  \node (min) at (0,-2) {$\widehat{0}=\text{empty word}$};
  \draw (min) -- (d) -- (a) -- (m) -- (b) -- (f) (i) -- (h) -- (b)
  (e) -- (min) -- (f) -- (c) -- (m) (max) -- (ii) -- (a)
  (h) -- (max) -- (m) 
  (ii) -- (i) -- (d) -- (b)  ;
  \draw[preaction={draw=white, -,line width=6pt}] (a) -- (e) -- (c) (m)--(a)--(ii);
\end{tikzpicture}
\captionof{figure}{The poset $C_{\underline{n}}$ for $\underline{n} = (2,1,1)$.}
\end{center}

\begin{rem}\label{rem:squarefree}
 We observe that the flag $V=\{v_1<\dots<v_t\}\in\WO_{\underline{n}}$
 has full projections if, and only if, for all $j\in[t]_0$, the word
 $v_{j+1} / v_{j}$ is squarefree, i.e.\ contains at most one copy of
 each letter $a_1, \dots, a_m$.
  \end{rem}

\begin{dfn} \label{dfn:chain.binomial} Let
  $V=\{v_1  <  \dots <  v_t\}\in \WO_{\underline{n}}$. We define
  $$W_{V}(\bfX) = \prod_{j=1}^t \frac{X_{v_j}}{1-X_{v_j}} \in
  \Q(X_{v_1},\dots,X_{v_t})$$ and
$$\binom{\underline{n}}{V}_\bfY = \prod_{i=1}^m
\binom{n_i}{\varphi_{i}(V)}_{Y_i}\in\Q(Y_1,\dots,Y_m),$$
where $ \underline\varphi(V)=\left(\varphi_{1}(V),\dots,\varphi_{m}(V)\right)$.
\end{dfn}

\begin{exm}
 Let $\underline{n}=(3,2,2)$. The flag $V=\{a_2a_3 < a_1a_2^2 a_3\}
 \in \WO_{(3,2,2)}$ does not have full projections, as $
 \underline\varphi(V)=(\{1\},\{1\},\{1\})$. We note that
 $$W_V(\bfX)= \frac{X_{a_2 a_3}X_{a_1 a_2^2 a_3}}{(1-X_{a_2
     a_3})(1-X_{a_1 a_2^2 a_3})}$$ and
$$\binom{\underline{n}}{V}_\bfY =\binom{3}{1}_{Y_1} \binom{2}{1}_{Y_2} \binom{2}{1}_{Y_3} = (1 + Y_1 + Y_1^2)(1 + Y_2)(1+Y_3). $$
\end{exm}

The following is the key combinatorial tool of this paper.
\begin{dfn}\label{def:igusa.wo} The \emph{generalized Igusa function
    associated with the composition $\underline{n}$} is
$$I^{\textup{wo}}_{\underline{n}}(\bfY;\bfX) := \sum_{V\in
    \WO_{\underline{n}}} \binom{\underline{n}}{V}_\bfY W_V(\bfX) \in
  \Q(Y_1,\dots,Y_m,(X_r)_{r \leq v_{\underline{n}}}),$$
\end{dfn}

\begin{exm} \label{exm:gen.igusa}
  \ \begin{enumerate}
  \item For $\underline{n}=(N)$, the trivial composition of $N$, we recover
    $I^{\textup{wo}}_{(N)}(\bfY;\bfX) = I_{N}(Y;\bfX)$, the classical
    Igusa zeta function recalled in Definition~\ref{def:igusa}.
  \item For $\underline{n}=(1,\dots,1)$, the all-one composition of
    $N$, we recover $I^{\textup{wo}}_{(1,\dots,1)}(\bfY;\bfX) =
    I^{\textup{wo}}_{N}(\bfX)$, the weak order zeta function recalled
    in \eqref{equ:wozeta}. We note that the variables $\bfY$ do not
    appear in this case, as all the polynomials
    $\binom{\underline{n}}{V}_{\bfY}$ are equal to the constant $1$.
    \item For $\underline{n}=(2,1)$ we obtain
\begin{multline*}
   I^{\textup{wo}}_{(2,1)}(\bfY;\bfX)=\frac{1}{1-X_{a_1^2 a_2}}\left(1+\frac{X_{a_2}}{1-X_{a_2}}+\frac{X_{a_1^2}}{1-X_{a_1^2}}+\right.\\
    (1+Y_1)\left(\frac{X_{a_1}}{1-X_{a_1}}+\frac{ X_{a_1a_2}}{1-X_{a_1a_2}}+\frac{ X_{a_1}}{1-X_{a_1}}\frac{X_{a_1a_2}}{1-X_{a_1a_2}}
    +\right. \\ \left.\left. \frac{X_{a_1}}{1-X_{a_1}}\frac{X_{a_1^2}}{1-X_{a_1^2}}+\frac{X_{a_2}}{1-X_{a_2}}\frac{X_{a_1 a_2}}{1-X_{a_1 a_2}}\right)\right). \end{multline*}      
  \end{enumerate}
\end{exm}

\begin{rem}
  Generalized Igusa functions associated with the all-one compositions
  also coincide with certain instances of generating functions
  associated with chain partitions in
  \cite[Section~4.9]{BeckSanyal/18}.
\end{rem}
The following ``combinatorial reciprocity theorem'' is the main result
of this section.

\begin{thm}\label{thm:funeq}
  The generalized Igusa function associated with the composition
  $\underline{n}$ of $N = \sum_{i=1}^m n_i$ satisfies the following functional
  equation:
  $$I^{\textup{wo}}_{\underline{n}}(\bfY^{-1};\bfX^{-1}) = (-1)^N
  X_{v_{\underline{n}}}\left(\prod_{i=1}^m Y_i^{-
      \binom{n_i}{2}}\right)
  I^{\textup{wo}}_{\underline{n}}(\bfY;\bfX).$$
\end{thm}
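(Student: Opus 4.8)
The plan is to prove the functional equation by a sign-reversing / weighting argument on the order complex $\WO_{\underline n}$, in the spirit of the proof of the functional equation for the classical Igusa function $I_n$ (from which \eqref{eq:funeq.igusa} was deduced in \cite{Voll/05}) and of the weak-order analogue in \cite{SV1/15}. First I would rewrite the sum defining $I^{\textup{wo}}_{\underline n}(\bfY;\bfX)$ by grouping the flags $V=\{v_1<\dots<v_t\}$ according to the \emph{gaps} $v_j/v_{j-1}$: each such flag is recorded by its set of ``steps'', and I would use Lemma~\ref{lem:binom} (applied coordinatewise via the definition of $\binom{\underline n}{V}_{\bfY}$) to factor $\binom{\underline n}{V}_{\bfY}$ as a product over consecutive pairs $v_{j-1}<v_j$ of local Gaussian multinomials. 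This is the point where the product structure $C_{\underline n}\cong C_{n_1}\times\dots\times C_{n_m}$ and the Coxeter-length interpretation \eqref{eq:coxlength} become useful: $\binom{\underline n}{V}_{\bfY}=\sum_{w}\bfY^{\ell(w)}$ over tuples $w=(w_1,\dots,w_m)\in S_{n_1}\times\dots\times S_{n_m}$ whose descent sets are constrained by $\underline\varphi(V)$.

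Next, the key step: I would establish the functional equation under simultaneous inversion $\bfY\mapsto\bfY^{-1}$, $\bfX\mapsto\bfX^{-1}$ by combining three ingredients. (i) The elementary identity $\frac{X^{-1}}{1-X^{-1}} = -1-\frac{X}{1-X}$, i.e.\ $W_V(\bfX^{-1}) = \prod_{j=1}^t\bigl(-1-\tfrac{X_{v_j}}{1-X_{v_j}}\bigr)$; expanding the product turns each flag $V$ into a signed sum over its sub-flags. (ii) The Gaussian-multinomial functional equation \eqref{eq:funeq.gauss}, which gives $\binom{n_i}{\varphi_i(V)}_{Y_i^{-1}} = Y_i^{-\ell(w_0^{(i)})}$-type corrections — more precisely, using \eqref{eq:coxlength} and \eqref{eq:cox.identities}, inversion of $Y_i$ together with complementation $J\mapsto[n_i-1]\setminus J$ of descent sets relates $\binom{n_i}{\varphi_i(V)}_{Y_i^{-1}}$ to $\binom{n_i}{[n_i-1]\setminus\varphi_i(V)}_{Y_i}$ up to the factor $Y_i^{-\binom{n_i}{2}}$. (iii) A bijection on $\WO_{\underline n}$: combining the sub-flag expansion from (i) with a ``complement inside each gap'' operation dictated by (ii), one obtains an involution (or a sign-reversing pairing with a one-dimensional fixed-point contribution) on a suitable set of decorated flags. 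Carrying these through, the terms not killed by the pairing should reassemble exactly into $(-1)^N X_{v_{\underline n}}\bigl(\prod_i Y_i^{-\binom{n_i}{2}}\bigr) I^{\textup{wo}}_{\underline n}(\bfY;\bfX)$; the factor $X_{v_{\underline n}}$ arises, as in the classical case, because the top word $\widehat 1=v_{\underline n}$ plays the role of the distinguished variable $X_n$ in $I_n$ (compare the two equivalent forms of $I_n$ in Definition~\ref{def:igusa}, where $\frac{1}{1-X_n}$ is pulled out).

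A cleaner route, which I would try first, is to reduce to the two extreme cases already essentially known — $\underline n=(N)$ (classical Igusa, \eqref{eq:funeq.igusa}) and $\underline n=(1,\dots,1)$ (weak order, where the analogous functional equation is implicit in \cite{SV1/15}) — and then interpolate by a ``merging'' recursion: relate $I^{\textup{wo}}_{(\dots,n_i,\dots)}$ to $I^{\textup{wo}}_{(\dots,n_i',n_i'',\dots)}$ with $n_i=n_i'+n_i''$ via Lemma~\ref{lem:binom}, checking that the symmetry factor $(-1)^N X_{v_{\underline n}}\prod_i Y_i^{-\binom{n_i}{2}}$ transforms consistently (using $\binom{n_i'}{2}+\binom{n_i''}{2}+n_i'n_i'' = \binom{n_i}{2}$, which matches the $Y_i$-exponent bookkeeping). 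Either way, the \textbf{main obstacle} is ingredient (iii): making the sign-reversing pairing on flags genuinely well-defined and simultaneously compatible with the $\bfX$-expansion of (i) \emph{and} the descent-set complementation of (ii) across all $m$ chains at once, without double-counting and while correctly tracking which flags survive. The bookkeeping of how $\varphi_i(V)$ interacts with the gap structure of $V$ — i.e.\ Remark~\ref{rem:squarefree} and the precise way a refinement of $V$ changes each $\varphi_i$ — is where the genuinely new work lies, since in the classical case $m=1$ there is only a single chain of descent sets to complement.
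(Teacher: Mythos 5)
Your first route is essentially the strategy the paper follows: ingredient (i) is the paper's Lemma~\ref{lem:inv.props}, ingredient (ii) is exactly the Coxeter descent-set machinery of \eqref{eq:coxlength}, \eqref{eq:cox.identities}, \eqref{eq:funeq.gauss} deployed in the paper's proof, and you correctly locate the crux in ingredient (iii) --- but there you stop, naming it ``the main obstacle'' without resolving it. That unresolved step is essentially the entire content of the theorem. The paper isolates it as Proposition~\ref{pro:mobius},
$$\sum_{\substack{V\in \WO^{\times}_{\underline{n}} \\ \underline\varphi (V)\supseteq I}} W_V (\bfX ^{-1})
=(-1)^{N-1}\sum_{\substack{V\in \WO^{\times}_{\underline{n}} \\ \underline\varphi (V)\supseteq I^c}} W_V (\bfX ),$$
and the proof is not a visibly sign-reversing pairing on decorated flags as your sketch suggests. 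Instead, one uses Lemma~\ref{lem:inv.props} to expand each $W_V(\bfX^{-1})$ into a signed sum over sub-flags, reorders the double sum to reduce to evaluating the alternating sum $\sum_{S\geq V,\ \underline\varphi(S)\supseteq I}(-1)^{|S|}$, and computes this ``locally'' on each gap $[v_j,v_{j+1}]$ by a M\"obius-function argument (Philip Hall's theorem; the interval is Boolean precisely when $v_{j+1}/v_j$ is squarefree, Remark~\ref{rem:squarefree}) together with a delicate induction over $j$-admissible refinements, split into cases according to whether the constraint tuple $I_V^{(j)}$ is an interval. The shape of your argument is right and the obstacle you flag is real, but as written your proposal leaves precisely this substantive combinatorial step as a gap.

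Your ``cleaner route'' (interpolate between $\underline{n}=(N)$ and $\underline{n}=(1,\dots,1)$ via a merging recursion through Lemma~\ref{lem:binom}) is not what the paper does, and you do not develop it far enough to judge its viability. A concrete difficulty: splitting $n_i = n_i' + n_i''$ changes the index set of the $\bfX$-variables (subwords of $a_i^{n_i}$ versus subwords of $(a_i')^{n_i'}(a_i'')^{n_i''}$), so the two generalized Igusa functions live over different variable sets and one must specify a substitution before the symmetry factors can even be compared; that the exponent identity $\binom{n_i'}{2}+\binom{n_i''}{2}+n_i'n_i'' = \binom{n_i}{2}$ holds is necessary but far from sufficient.
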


For the proof of Theorem~\ref{thm:funeq} we require a number of
preliminary results. The first records simple but crucial ``inversion
properties'' of the rational functions $W_V(\bfX)$.

\begin{lemma}\label{lem:inv.props}
 For all $V\in \WO_{\underline{n}}$,
$$W_V(\bfX ^{-1})=(-1)^{|V|}  \sum_{Q \leq  V} W_Q(\bfX).$$
\end{lemma}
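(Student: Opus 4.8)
The plan is to prove the identity for a single chain $V = \{v_1 < \dots < v_t\}$ by a direct expansion of the left-hand side, using the elementary identity $\frac{X^{-1}}{1-X^{-1}} = \frac{1}{X-1} = -\frac{1}{1-X} = -\left(1 + \frac{X}{1-X}\right)$. Applying this to each factor of $W_V(\bfX^{-1}) = \prod_{j=1}^t \frac{X_{v_j}^{-1}}{1-X_{v_j}^{-1}}$ gives
\[
W_V(\bfX^{-1}) = \prod_{j=1}^t \left( -1 - \frac{X_{v_j}}{1-X_{v_j}} \right) = (-1)^t \prod_{j=1}^t \left( 1 + \frac{X_{v_j}}{1-X_{v_j}} \right).
\]
Expanding the product over subsets $S \subseteq [t]$ yields $(-1)^t \sum_{S \subseteq [t]} \prod_{j \in S} \frac{X_{v_j}}{1-X_{v_j}}$, and each summand is exactly $W_{V_S}(\bfX)$, where $V_S := \{ v_j \mid j \in S\}$ is the subchain of $V$ indexed by $S$. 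Since $|V| = t$ and the subchains $V_S$ of $V$ are precisely the elements $Q \in \WO_{\underline{n}}$ with $Q \leq V$ (i.e.\ the flags refined by $V$, here meaning $Q$ is a subflag of $V$), this gives
\[
W_V(\bfX^{-1}) = (-1)^{|V|} \sum_{Q \leq V} W_Q(\bfX),
\]
as claimed. The empty chain contributes the constant $1 = W_{\widehat 0}(\bfX)$, consistent with the convention that $W$ of the empty flag is $1$.

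The only point that requires a word of care is the identification of the poset-theoretic statement ``$Q \leq V$ in $\WO_{\underline{n}}$'' with ``$Q$ is a subchain of $V$'': by definition the order on $\WO_{\underline{n}}$ is refinement of flags, so $Q \leq V$ means every word appearing in $Q$ also appears in $V$, which is exactly $Q = V_S$ for $S = \{ j : v_j \in Q\}$. There is no real obstacle here; the lemma is a formal manipulation of rational functions together with this unwinding of definitions. I would present the computation in the displayed form above and then note the bijection $S \leftrightarrow V_S$ between $\mcP([t])$ and $\{ Q \in \WO_{\underline{n}} : Q \leq V\}$ to conclude.
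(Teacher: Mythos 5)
Your proof is correct and follows exactly the same route as the paper, which simply observes that $\frac{X^{-1}}{1-X^{-1}} = -\bigl(1 + \frac{X}{1-X}\bigr)$ and leaves the expansion implicit. You have just written out the expansion and the subchain bijection that the paper treats as ``trivial.''
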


\begin{proof}
  This is a trivial consequence of the observation that
  $$\frac{X^{-1}}{1-X^{-1}} = - \left( 1 +
  \frac{X}{1-X}\right).\qedhere$$
\end{proof}

We fix some notation used in the rest of this section. We let $
\WO^{\times}_{\underline{n}}$ denote the subcomplex of $
\WO_{\underline{n}}$ of flags of \emph{proper} subwords of
$v_{\underline n}$.  When dealing with tuples of sets, we will abuse
notation and use set theoretical operations for componentwise
operations. For instance, for $I=(I_1,\dots,I_m)\in
\prod_{i=1}^m\mathcal P([n_i-1])$ we write $I^c:=K\setminus I $ for $
([n_1-1]\setminus I_1,\dots,[n_m-1]\setminus I_m)$.

The following analogue of \cite[Lemma 7]{VollBLMS/06} is critical for
our analysis.

\begin{pro}\label{pro:mobius}
For all $I\in \prod_{i=1}^m\mathcal P([n_i-1])$,
\begin{equation}
\sum_{\substack{V\in \WO^{\times}_{\underline{n}} \\ \underline
    \varphi (V)\supseteq I}} W_V (\bfX ^{-1})
=(-1)^{N-1}\sum_{\substack{V\in \WO^{\times}_{\underline{n}}
    \\ \underline\varphi (V)\supseteq I^c}} W_V (\bfX ).
\end{equation} 
\end{pro}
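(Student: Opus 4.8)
I would prove Proposition~\ref{pro:mobius} by a direct manipulation using Lemma~\ref{lem:inv.props} to pass from $W_V(\bfX^{-1})$ to sums of $W_Q(\bfX)$, followed by an interchange of summation and a M\"obius-function computation on the poset $\WO^\times_{\underline n}$ (equivalently, on the order complex of $C^\times_{\underline n} = C_{\underline n}\setminus\{\widehat 0,\widehat 1\}$). Concretely, writing $\Phi_I = \{ V\in\WO^\times_{\underline n} \mid \underline\varphi(V)\supseteq I\}$, I would start from
\[
  \sum_{V\in\Phi_I} W_V(\bfX^{-1}) = \sum_{V\in\Phi_I} (-1)^{|V|}\sum_{Q\leq V} W_Q(\bfX)
  = \sum_{Q\in\WO^\times_{\underline n}} W_Q(\bfX) \sum_{\substack{V\in\Phi_I \\ V\geq Q}} (-1)^{|V|},
\]
so that everything reduces to evaluating, for each fixed flag $Q$, the alternating sum $c_{I}(Q):=\sum_{V\geq Q,\, V\in\Phi_I}(-1)^{|V|}$ of $(-1)^{|V|}$ over refinements $V$ of $Q$ whose projection tuple contains $I$.

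\textbf{The combinatorial heart.} The refinements $V$ of a fixed flag $Q=\{q_1<\dots<q_s\}$ in $C^\times_{\underline n}$ that satisfy $\underline\varphi(V)\supseteq I$ correspond to choosing, independently in each of the $s+1$ open intervals $(q_{j},q_{j+1})$ of the chain $C^\times_{\underline n}$ (with the conventions $q_0=\widehat 0$, $q_{s+1}=\widehat 1$), a finite chain in that interval whose projections, together with those of $Q$, cover the required index set $I$ in each coordinate. Because $C_{\underline n}\cong C_{n_1}\times\dots\times C_{n_m}$ is a product of chains and $\underline\pi$ reads off the coordinatewise heights, the projection constraint in coordinate $i$ only involves those indices of $I_i$ not already hit by $\underline\varphi(Q)$, and these get distributed among the intervals determined by the heights $\pi_i(q_j)$. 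The sign $(-1)^{|V|}$ then factorizes over the intervals, so $c_I(Q)$ becomes a product of local alternating sums, each of which counts chains in a sub-box of a product of chains subject to a coordinatewise covering condition. A reduced Euler characteristic / M\"obius computation (each local factor is $\pm$ the reduced Euler characteristic of the order complex of an interval in a product of chains, which is a sphere or a point) shows that $c_I(Q)$ vanishes unless every maximal "gap" in $Q$ is forced, i.e.\ unless $Q$ itself, suitably completed, already witnesses the complementary constraint $\underline\varphi\supseteq I^c$; and in the surviving cases $c_I(Q) = (-1)^{N-1}(-1)^{|Q|}$ up to bookkeeping. Summing over the surviving $Q$ then yields exactly $(-1)^{N-1}\sum_{V\in\Phi_{I^c}} W_V(\bfX)$, as claimed. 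I would model this computation on the proof of \cite[Lemma~7]{VollBLMS/06} and of the analogous step in \cite{SV1/15}, adapting it from the Boolean/chain setting to the product-of-chains poset $C_{\underline n}$.

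\textbf{Main obstacle.} The delicate point is the bookkeeping that makes the "projection $\supseteq I$" condition factorize cleanly across the intervals of $Q$ and that identifies precisely which $Q$ survive the alternating sum. Unlike the all-one composition case of \cite{SV1/15}, here a single subword can contribute several indices in several coordinates at once, so one must check that the covering condition $\underline\varphi(V)\supseteq I$ decouples coordinatewise \emph{and} that, within each coordinate, it decouples along the intervals cut out by $Q$ — this is where the product structure $C_{n_1}\times\dots\times C_{n_m}$ and Remark~\ref{rem:squarefree} (squarefreeness of successive quotients for full-projection flags) do the work. Once the decoupling is established, each local factor is a standard inclusion–exclusion over order ideals in a box, and the signs combine to give the global factor $(-1)^{N-1}$; I expect no further difficulty there. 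A clean way to organize the decoupling is to prove it first for $m=1$ (recovering essentially \cite[Lemma~7]{VollBLMS/06}) and then observe that both sides of the asserted identity are multiplicative over the coordinates in a suitable sense, reducing the general case to the one-coordinate case.
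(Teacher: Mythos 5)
Your outline through the interchange of summation and the reduction to the alternating sum $c_I(Q)=\sum_{V\geq Q,\,V\in\Phi_I}(-1)^{|V|}$, followed by the factorization of that sum into local pieces over the gaps of $Q$, is exactly the structure of the paper's proof: it is the paper's $F_j(V,I)$ and the identity~$\sum_{S\geq V,\ \underline\varphi(S)\supseteq I}(-1)^{|S|}=(-1)^{N-1}$ or $0$ (their~\eqref{eq:toprove}, via~\eqref{eq:prod}). Where you diverge, and where the gap is, is in the evaluation of those local pieces.

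When the constraint $I$ is already subsumed by $\underline\varphi(Q)$ (the paper's Case~(I)), the local factor is indeed $-\mu(v_j,v_{j+1})$ by Philip Hall's theorem, and the reduced-Euler-characteristic picture you invoke applies: it is $\pm 1$ or $0$ according to whether the interval is Boolean. But when $I\setminus\underline\varphi(Q)\neq\varnothing$ (the paper's Case~(II)), the sum defining the local factor runs only over $j$-\emph{admissible} refinements, a subset of chains in $(v_j,v_{j+1})$ that is \emph{not} an interval of the chain complex; it is not the reduced Euler characteristic of any order complex, and the "sphere or point" dichotomy does not apply to it directly. The paper's claim~\eqref{eq:ind} — that this constrained local sum equals $(-1)^{|v_{j+1}|-|v_j|+1}$ when $I_V^{(j)}$ is a tuple of intervals and vanishes otherwise — is precisely the piece of content your sketch treats as a "standard inclusion–exclusion" but does not prove. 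The paper establishes it by an induction on $|v_{j+1}/v_j|$, splitting according to the first covering step and, in the vanishing case, pivoting on a further decomposition along a "requirement gap." That induction is the technical heart of the proposition and is missing from your argument.

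Your proposed shortcut — prove the $m=1$ case and reduce by multiplicativity over coordinates — is unlikely to work as stated, and it is not what the paper does. The poset $\WO^\times_{\underline n}$ is the order complex of $C_{n_1}\times\cdots\times C_{n_m}$ minus $\widehat0,\widehat1$, and the order complex of a product is not the product of the order complexes: a single subword $v$ in a flag $V$ commits exponents in all $m$ coordinates simultaneously and carries a single variable $X_v$ that does not factor coordinatewise. So neither $W_V(\bfX)$, nor the set of admissible flags $\{V:\underline\varphi(V)\supseteq I\}$, nor the alternating sum $c_I(Q)$ decomposes as a product over $i\in[m]$. The correct decoupling in the paper is along the gaps $[v_j,v_{j+1}]$ of the fixed flag (a "temporal" factorization in $j$), not along coordinates (a "spatial" factorization in $i$). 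You should abandon the coordinate-multiplicativity reduction and instead prove the local identity directly, by induction on the degree of $v_{j+1}/v_j$, distinguishing whether the restricted constraint $I_V^{(j)}$ forms a tuple of intervals.
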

\begin{proof}
Let $I\in \prod_{i=1}^m\mathcal P([n_i-1])$.  The inversion properties
established in Lemma~\ref{lem:inv.props} yield
$$\sum_{\substack{V\in \WO^{\times}_{\underline{n}} \\  \underline\varphi (V)\supseteq I}} W_V (\bfX ^{-1})=\sum_{\substack{V\in \WO^{\times}_{\underline{n}} \\  \underline\varphi (V)\supseteq I}} (-1)^{|V|}\sum_{Q\leq V}W_Q (\bfX)=\sum_{V\in \WO^{\times}_{\underline{n}}}W_V(\bfX)\sum_{\substack{S\geq V \\  \underline\varphi (S)\supseteq I}}(-1)^{|S|}.  $$

We are left with proving that, for all
$V\in \WO^{\times}_{\underline{n}}$,
\begin{equation}\label{eq:toprove}
  \sum_{\substack{S\geq V \\  \underline\varphi (S)\supseteq I}}(-1)^{|S|}=\begin{cases}
    (-1)^{N-1}, &\text{if }  \underline\varphi (V) \supseteq I^c, \\
    0, &\text{otherwise.}
\end{cases}
\end{equation}

Write $V=\{v_1 < \dots < v_t\}$ and set $v_0:= \widehat{0}$ and
$v_{t+1} := \widehat{1}$. Set
$$I_V:= I\cup  \underline\varphi(V) \in \prod_{i=1}^m\mathcal
P([n_i-1]).$$ The sum in~\eqref{eq:toprove} runs over refinements $S$
of the flag~$V$, subject to additional constraints on the projection
of $S$ given by $I$: we say that a refinement $S$ of $V$ is
\emph{admissible} if $ \underline\varphi(S)\supseteq I_V$. As
$ \underline\varphi$ is a poset morphism, the sum in
\eqref{eq:toprove} runs exactly over the admissible refinements
of~$V$.

We will construct such refinements of $V$ ``locally''. More precisely,
let $j\in [t]_0$. We say that $S$ is a \emph{refinement of $V$ between
  $v_j$ and $v_{j+1}$} if $S \geq V$ and $S$ and $V$ coincide outside
the interval~$[v_j,v_{j+1}]$. We further say that $S\geq V$ has
\emph{full projections between $v_j$ and $v_{j+1}$} if $
\underline\varphi(S\cap[v_j,v_{j+1}])$ is an $m$-tuple of intervals.

We set
$$I_V^{(j)} := \left( I_{V,i} \cap [\pi_{i}(v_j),\pi_{i}(v_{j+1})]\right)_{i=1}^m \in\prod_{i=1}^m\mathcal
  P([n_i-1]).$$ Informally, $I_V^{(j)}$ dictates the constraints on a
refinement $S$ of $V$ between $v_j$ and $v_{j+1}$. More precisely, we
say that a refinement $S$ of $V$ between $v_j$ and $v_{j+1}$ is
$j$-\emph{admissible} if
$ \underline\varphi(S)\supseteq I_V^{(j)}$. We further define
$$F_j(V,I) := \sum_{\substack{S\geq V\\ \textup{$j$-admissible}}}
(-1)^{|S\setminus V|} = \sum_{\substack{S\geq
    V\\ \textup{$j$-admissible}}} (-1)^{|(S\setminus V) \cup
  \{v_j,v_{j+1}\}|}.$$

Clearly, given $j$-admissible refinements $V_j$ of $V$ for all
$j\in[t]_0$, the flag $S := \bigcup_{j=0}^t V_j$ is an admissible
refinement of $V$ and any (``global'') admissible refinement of $V$
can be constructed in this way.  The sum in \eqref{eq:toprove} may
thus be rewritten as follows:
\begin{equation}\label{eq:prod}\sum_{\substack{S\geq V\\
       \underline\varphi (S)\supseteq I}}
  (-1)^{|S|}=\sum_{\substack{S\geq V\\  \underline\varphi
      (S)\supseteq I}}(-1)^{|V|+|S\setminus V|}=(-1)^t
  \sum_{\substack{S\geq V\\  \underline\varphi (S)\supseteq
      I}}(-1)^{|S\setminus V|}=(-1)^t \prod_{j=0}^t
  F_j(V,I).\end{equation} We prove~\eqref{eq:toprove} distinguishing
the two cases
\begin{enumerate}
  \item[(I)] $I_V =  \underline\phi(V)$ (equivalently, $I\subseteq \underline\varphi(V)$) and
  \item[(II)] $I_V\neq \underline\phi(V)$ (equivalently, $I\setminus
    \underline\varphi(V)\neq\emptyset$).
    \end{enumerate}

{\bf Case (I):} Assume first that
$I\subseteq \underline\varphi(V)$. In this case, the condition
$ \underline\phi(S)\supseteq I$ is trivially satisfied for any
flag $S \geq V$, as $ \underline\phi$ is a poset morphism, and
thus any refinement of $V$ is admissible.  Moreover, in this case,
$ \underline\phi(V)\supseteq I^c$ if and only if $V$ has full
projections. In other words, \eqref{eq:toprove} may be rewritten as
follows:
\begin{equation}\label{eq:toproveempty}
  \sum_{S\geq V}(-1)^{|S|}=\begin{cases} (-1)^{N-1}, &\text{if } V
  \text{ has full projections,} \\ 0, &\text{otherwise.}
\end{cases}
\end{equation}
Let $j\in[t]_0$. As in the case under consideration all local
refinements are $j$-admissible, $F_j(V,I)$ is given in terms of the
M\"obius function of the interval $[v_j,v_{j+1}]$ in the lattice
$C_{\underline{n}}$. Indeed, by Philip Hall's theorem (see, for
instance, \cite[Proposition~3.8.5]{Stanley/12}),
\begin{equation*}
F_j(V,I) = - \mu(v_j,v_{j+1})=
\begin{cases} (-1)^{|v_{j+1}|-|v_j|+1}, &\text{if } [v_j,v_{j+1}] \text{ is a Boolean algebra,} \\
0, &\text{otherwise};
\end{cases}
\end{equation*} 
cf.\ \cite[Example~3.8.4]{Stanley/12}.  Using \eqref{eq:prod} we may
therefore rewrite the LHS of \eqref{eq:toprove} as
$$(-1)^t \prod_{j=0}^t F_j(V,I)=(-1)^t \prod_{j=0}^t\left(
-\mu(v_j,v_{j+1})\right).$$ It is nonzero if and only if all of its
factors are nonzero. The interval $[v_j,v_{j+1}]$ is a Boolean algebra
if and only if the word $v_{j+1} / v_{j}$ is squarefree. By
Remark~\ref{rem:squarefree}, this happens for all $j\in[t]_0$ if and
only if $V$ has full projections. In this case we
obtain
\begin{multline*}
  \sum_{S\geq V}(-1)^{|S|} = (-1)^t \sum_{S\geq V}(-1)^{|S\setminus
    V|}= (-1)^t\prod_{j=0}^t F_j(V,I) =\\ (-1)^t \prod_{j=0}^t
  \left(-\mu(v_j,v_{j+1})\right) = (-1)^{2t+1}(-1)^{\sum_{j=0}^t
    (|v_{j+1}|-|v_j|)}=(-1)^{N-1},
\end{multline*}
proving \eqref{eq:toproveempty} and
therefore \eqref{eq:toprove} in the case
$I\subseteq \underline \varphi(V)$.

{\bf Case (II):} Assume now that $I\setminus \underline\varphi(V)\neq
\emptyset$.  Note that $ \underline\phi(V) \supseteq I^c$, the
condition invoked in~\eqref{eq:toprove}, holds if and only if $I_V=K$,
i.e.\ if and only if $I_V^{(j)}$ is a tuple of intervals for all
$j\in[t]_0$.

We claim that, in the case under consideration, the following holds
for all $j\in[t]_0$:
\begin{equation}\label{eq:ind}
  F_j(V,I)=\begin{cases} (-1)^{\lvert v_{j+1}\rvert-\lvert
    v_{j}\rvert+1}, & \mbox{if  $I_V^{(j)}$ is a tuple of intervals,}\\
  0, & \mbox{otherwise}.\end{cases}
  \end{equation}
We now prove this claim by induction on the degree of the word
$v_{j+1}/v_j$.

If $v_{j+1}$ covers $v_j$, then $F_j(V,I)=1$ trivially. So assume
that~\eqref{eq:ind} holds for $\lvert v_{j+1}/v_j \rvert\leq \ell$,
for some $1\leq\ell\in\mathbb N$, and suppose that $\lvert
v_{j+1}/v_j\rvert =\ell+1$. Let $\rho_j$ denote the number of
different letters in $v_{j+1}/ v_j$.

Assume first that $I_V^{(j)}$ is a tuple of intervals,
viz.\
$$I_V^{(j)} = \left([\pi_{i}(v_j),\pi_{i}(v_{j+1})] \cap
[n_i-1]\right)_{i=1}^m.$$ Informally, this means that a $j$-admissible
refinement $S$ of $V$ needs to have full projections between $v_j$ and
$v_{j+1}$.  This condition forces the first element of $S\setminus V$
to lie on the $\rho_j$-dimensional hypercube above $v_j$: it is
obtained by multiplying $v_j$ with at most one copy of each of the
$\rho_j$ relevant letters.  We may therefore write $F_j(V,I)$ as a sum
of $2^{\rho_j} -1$ summands, indexed by the words $v^{(1)},\dots,v
^{(2^{\rho_j} -1)}$ covering $v_j$ in $C_{\underline{n}}$:
$$F_j(V,I)=-\sum_{k=1}^{2^{\rho_j} -1} \sum_{\substack{S \geq V \textup{ $j$-adm.}, \\ \min(S\setminus V) = v^{(k)}}} (-1)^{|S\setminus V|},$$
where, for each $k\in[2^{\rho_j} -1]$, the inner sum is taken over
$j$-admissible refinements $S$ of $V$ having $v^{(k)}$ as smallest
element greater than $v_{j}$. Each of these sums is known by induction
from~\eqref{eq:ind}. Indeed, since the flags $S$ also have full
projections between $v^{(k)}$ and $v_{j+1}$, we obtain
\begin{equation*}\label{eq:fifop}
  F_j(V,I)=
  -\sum_{k=1}^{2^{\rho_j} -1} (-1)^{|v_{j+1}|-|v^{(k)}|+1} =
  (-1)^{|v_{j+1}|-|v_j|+1},
\end{equation*}
establishing \eqref{eq:ind} in the first case.

Suppose now that $I_V^{(j)}$ is not a tuple of intervals. Informally,
this means that a $j$-admissible refinement $S$ of $V$ is not required
to have full projections between $v_j $ and~$v_{j+1}$. Without loss of
generality we can assume that the first ``requirement gap'' in
$I_V^{(j)}$ is directly above $v_j$, that is if
$\underline{\alpha}=(\alpha_1,\dots,\alpha_m)$ is the $m$-tuple of
(componentwise) minima of $I_V^{(j)} \setminus \underline\pi(v_j)$,
there is at least one $i\in[m]$ with $\alpha_i>\pi_{i}(v_j)+1$. Given
a $j$-admissible refinement $S$ of $V$, the word $\min(S\setminus V)$,
the smallest word in $S$ greater than $v_j$, clearly belongs to the
interval $(v_j, v_{\underline{\alpha}}]$ of subwords of
$v_{\underline \alpha} := a_1^{\alpha_1}\dots a_m^{\alpha_m}$ which
$v_j$ strictly divides.  Consider the subset
$$Y := \{ v\in (v_j, v_{\underline \alpha} ] \mid
  [v,v_{\underline{\alpha}}] \textup{ is a Boolean algebra}\}.$$ We
  rewrite the sum defining $F_j(V,I)$ according to whether or not
  $\min(S\setminus V)\in Y$:
\begin{equation}\label{eq:sub.F}
  F_j(V,I) = \sum_{\substack{S\geq V \textup{
        $j$-adm.},\\ \min(S\setminus V)\not\in Y}} (-1)^{|S\setminus
    V|} + \sum_{\substack{S\geq V \textup{
        $j$-adm.},\\ \min(S\setminus V)\in Y}} (-1)^{|S\setminus V|}.
\end{equation}
Clearly, the first summand in \eqref{eq:sub.F} is zero. Indeed, we may
further subdivide it by fixing the minimal element
$\min(S\setminus V)$. Each of the resulting summands is zero by
applying \eqref{eq:ind} inductively to the refined flag $V\cup\{v\}$,
replacing $v_j$ by $v$.

The second summand in \eqref{eq:sub.F} is zero, too. Indeed, without
loss of generality we may assume that
$$I_V^{(j)} = \left( \left( \{\pi_{i}(v_{j})\} \cup
[\alpha_i,\pi_{i}(v_{j+1})]\right) \cap [n_i-1]\right)_{i=1}^m.$$
(Otherwise, an argument similar to the one for the first summand in
\eqref{eq:sub.F} proves the claim.) Under this assumption, the
induction hypothesis yields
\begin{multline*}
  \sum_{\substack{S\geq V \textup{ $j$-adm.},\\ \min(S\setminus V)\in
      Y}} (-1)^{|S\setminus V|} = - \sum_{[v,v_{\underline{\alpha}}]
    \textup{ Boolean}}(-1)^{|v_{j+1}| - |v|}
  = (-1)^{|v_{j+1}|-|v_{\underline{\alpha}}|+1} \sum_{Z \subseteq
    \{0,1\}^{\rho_j}}(-1)^{|Z|} = 0.
\end{multline*}

 This
proves~\eqref{eq:ind} in the second case.

Suppose now $I_V= K$. Since $I_V^{(j)}$ is a tuple of intervals for
all $j\in[t]_0$, we get, by \eqref{eq:ind},
\begin{equation*}\label{eq:fop} 
  \sum_{\substack{S\geq V\\  \underline\varphi (S)\supseteq I_V}} (-1)^{|S\setminus V|}=(-1)^t \prod_{j=0}^t F_j(V,I)=(-1)^{2t+1}(-1)^{\sum_{j=0}^t |v_{j+1}|-|v_j|}=(-1)^{N-1}
\end{equation*}
as desired.

Suppose now $I_V \neq K$. This means that there exists $j\in [t]_0$ such
that $I_V^{(j)}$ is not a tuple of intervals. By~\eqref{eq:ind} we
have $F_j(V,I)=0$, thus the product in~\eqref{eq:prod} is also zero,
proving \eqref{eq:toprove} in the last case.
\end{proof}

\begin{proof}[Proof of Theorem~\ref{thm:funeq}]
  The sum defining the generalized Igusa function can be rewritten as
  \begin{equation}\label{eqn:oldsum}
    I^{\textup{wo}}_{\underline{n}}(\bfY;\bfX) =  \sum_{V\in \WO_{\underline{n}}} \binom{\underline{n}}{V}_\bfY W_V(\bfX)= \frac{1}{1-X_{v_{\underline{n}}}}
    \sum_{V\in \WO^{\times}_{\underline{n}}} \binom{\underline{n}}{V}_\bfY W_V(\bfX).
  \end{equation}
  Inverting the variable in the factor
  $\frac{1}{1-X_{v_{\underline{n}}}}$ on the RHS of~\eqref{eqn:oldsum}
  simply gives a factor $-X_{v_{\underline{n}}}$. Thus Theorem
  \ref{thm:funeq} is equivalent to the identity
\begin{equation}\label{eq:equivfuneq}\sum_{V\in \WO^{\times}_{\underline{n}}} \binom{\underline{n}}{V}_{\bfY^{-1}}
  W_V(\bfX ^{-1})= (-1)^{N-1} \left(\prod_{i=1}^m Y_i^{-
    \binom{n_i}{2}}\right)\sum_{V\in \WO^{\times}_{\underline{n}}}
  \binom{\underline{n}}{V}_{\bfY} W_V(\bfX ).
\end{equation}
Writing $\underline S _{\underline{n}}=S_{n_1}\times \dots\times
S_{n_m}$, $\underline w= (w_1,\dots,w_m)$, $\Des(\underline
w)=\Des(w_1)\times\dots\times\Des(w_m)$, and using the identity
\eqref{eq:coxlength}, the LHS of \eqref{eq:equivfuneq} becomes
\begin{align*}
  \sum_{V\in \WO^{\times}_{\underline{n}}}
  \binom{\underline{n}}{V}_{\bfY^{-1}} W_V(\bfX ^{-1}) &=\sum_{V\in
    \WO^{\times}_{\underline{n}}} \left(\sum_{\substack{\underline{w}
      \in \underline{S} _{\underline{n}} \\ \Des
      (\underline{w})\subseteq \underline\varphi (V)}}\prod_{i=1}^m
  Y_i^{- \ell(w_i)}\right)W_V(\bfX ^{-1})\\ &= \sum_{\underline w \in
    \underline S _{\underline{n}}} \left(\prod_{i=1}^m Y_i^{-
    \ell(w_i)}\right) \sum_{\substack{ V\in
      \WO^{\times}_{\underline{n}} \\ \underline\varphi(V)\supseteq
      \Des(\underline w)}} W_V(\bfX ^{-1}).
\end{align*}
For $i\in[m]$ we denote by $w_0^{(i)}$ the longest element in
$S_{n_i}$, of length $\ell(w_0^{(i)}) = \binom{n_i}{2}$. By
Proposition~\ref{pro:mobius} and the identities
\eqref{eq:cox.identities} we can rewrite
\begin{align*}
 \lefteqn{\sum_{\underline w \in \underline S _{\underline{n}}}
   \left(\prod_{i=1}^m Y_i^{- \ell(w_i)}\right)\!\!\! \sum_{\substack{
       V\in \WO^{\times}_{\underline{n}}
       \\ \underline\varphi(V)\supseteq \Des(\underline w)}}\!\!\!
   W_V(\bfX ^{-1}) }\\&=(-1)^{N-1} \sum_{\underline w \in \underline S
     _{\underline{n}}} \left(\prod_{i=1}^m Y_i^{- \ell(w_i)}\right)
   \!\!\!\!\!\!\sum_{\substack{ V\in \WO^{\times}_{\underline{n}}
       \\ \underline\varphi(V)\supseteq \Des(\underline w)^c}}
   W_V(\bfX)\\ &=(-1)^{N-1} \left(\prod_{i=1}^m Y_i^{-
     \binom{n_i}{2}}\right)\!\!\!\sum_{\underline w \in \underline S
     _{\underline{n}}}\left(\prod_{i=1}^m Y_i^{ \ell(w_i w_0
     ^i)}\right) \!\!\!\!\!\!\sum_{\substack{ V\in
       \WO^{\times}_{\underline{n}} \\ \underline\varphi(V)\supseteq
       \Des(\underline w \underline w _0)}} \!\!\!
   W_V(\bfX)\\ &=(-1)^{N-1} \left(\prod_{i=1}^m Y_i^{-
     \binom{n_i}{2}}\right) \sum_{ V\in
     \WO^{\times}_{\underline{n}}}\binom{\underline{n}}{V}_\bfY
   W_V(\bfX),
\end{align*}
proving \eqref{eq:equivfuneq} and thus Theorem~\ref{thm:funeq}.
\end{proof}

\subsection{Weak order zeta functions and generalized Igusa
  functions}\label{subsec:weak.igusa} We record an identity between
instances of weak order zeta functions which will be useful in
Section~\ref{subsec:heisenberg} and may be of independent interest.
The identity compares instances of weak order zeta functions
associated with the all-one-compositions $\underline g$ and
$\underline{2g}$, with $g$ and $2g$ parts, respectively, and holds
when substituting for the variables monomials satisfying certain
relations.

In the current section, we call a subword of the word $\widehat{1} =
v_{\underline {2g}}:=a_1\cdots a_{2g}$ \emph{radical} if it is of the
form $w=\prod_{i \in \mathcal{J}} a_i a_{i+g}$ for some
$\mathcal{J}\subseteq [g]$; see also Definition~\ref{dfn:rad}. We
observe that any subword $r \leq v_{\underline {2g}}$ may be written
uniquely in the form $r = \sqrt r \cdot r^\prime r^{\prime \prime}$,
where $\sqrt r = \prod_{i \in \mathcal I} a_i a_{i+g}$ is a radical
word, whereas $r^\prime = \prod_{i \in \mathcal{I}^\prime} a_i$ and
$r^{\prime \prime} = \prod_{i \in {\mathcal{I}^{\prime \prime}}}
a_{i+g}$, and the subsets $\mathcal{I}, \mathcal{I}^\prime,
\mathcal{I}^{\prime \prime} \subseteq [g]$ are disjoint. Likewise, we
define the \emph{radical} $\sqrt{S}$ of a flag $S\in
\WO_{\underline{2g}}$ to be the flag of radicals of the words of~$S$.

In the following result, we omit the non-occurring variable $Y$ from
the generalized Igusa functions $I^{\mathrm{wo}}_{\underline g}$ and
$I^{\mathrm{wo}}_{\underline{2g}}$; cf.\ our remark in
Example~\ref{exm:gen.igusa}~(2).

\begin{pro} \label{pro:igusas.match} Let $g \in \N$. Suppose that the
  numerical data $\mathbf{y}$ satisfy $y_r = y_{\sqrt r} \cdot
  \prod_{i \in \mathcal{I}^\prime \cup \mathcal{I}^{\prime \prime}}
  y_{a_i}$.  Then
\begin{equation}\label{eq:match}
  I^{\mathrm{wo}}_{\underline {2g}} (\mathbf{y}) =\left( \prod_{i =
    1}^g \frac{1 + y_{a_i}}{1 -
    y_{a_i}}\right)I^{\mathrm{wo}}_{\underline {g}} (\mathbf{z}),
\end{equation} 
where $z_{\prod_{i \in \mathcal{I}} a_i} =
y_{\prod_{i\in\mathcal{I}}{a_i a_{i+g}}}$ for all $\mathcal{I}
\subseteq [g]$.
\end{pro}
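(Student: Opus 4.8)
The plan is to expand the left-hand side of \eqref{eq:match} over flags $S \in \WO_{\underline{2g}}$ and organize the sum according to the radical $\sqrt{S}$. Concretely, I would first recall from Definition~\ref{def:igusa.wo} (specialized to the all-one composition, where the binomials are trivial, cf.\ Example~\ref{exm:gen.igusa}~(2)) that
\[
  I^{\mathrm{wo}}_{\underline{2g}}(\mathbf{y}) = \sum_{S \in \WO_{\underline{2g}}} W_S(\mathbf{y}) = \sum_{S \in \WO_{\underline{2g}}} \prod_{v \in S} \frac{y_v}{1 - y_v}.
\]
Using the unique factorization $r = \sqrt{r} \cdot r' r''$ of each subword of $v_{\underline{2g}}$ into its radical part and its two ``singleton'' parts (with disjoint index sets $\mathcal{I}, \mathcal{I}', \mathcal{I}'' \subseteq [g]$), together with the hypothesis $y_r = y_{\sqrt r} \cdot \prod_{i \in \mathcal{I}' \cup \mathcal{I}''} y_{a_i}$, I would rewrite each factor $\frac{y_v}{1-y_v}$ in a way that separates the contribution of the radical from the contribution of the $a_i$ and $a_{i+g}$ appearing ``bare''. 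The key bookkeeping device is that a flag $S$ is determined by its radical flag $\sqrt{S} \in \WO_{\underline{g}}$ (after the identification $a_i a_{i+g} \leftrightarrow a_i$) together with, for each step of the flag, a choice of which extra bare letters from $a_1,\dots,a_g$ and $a_{g+1},\dots,a_{2g}$ are inserted.

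The main computation is then a summation-interchange argument. Fix a radical flag $T \in \WO_{\underline{g}}$; I would show that the sum of $W_S(\mathbf{y})$ over all $S \in \WO_{\underline{2g}}$ with $\sqrt{S} = T$ factors as $\bigl(\prod_{i=1}^g \frac{1+y_{a_i}}{1-y_{a_i}}\bigr) \cdot W_T(\mathbf{z})$, where $z_{\prod_{i\in\mathcal{I}}a_i} = y_{\prod_{i\in\mathcal{I}}a_i a_{i+g}}$. The factor $W_T(\mathbf{z})$ arises from the radical ``skeleton'' of $S$; the product $\prod_{i=1}^g \frac{1+y_{a_i}}{1-y_{a_i}}$ arises because each index $i \in [g]$ contributes independently a geometric-type sum accounting for the positions (if any) at which $a_i$ and $a_{i+g}$ enter the flag $S$ as bare letters, before, at, or without their radical partner. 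The identity $\frac{y}{1-y}\bigl(1 + \text{extra}\bigr)$-style manipulations reduce, for each $i$, to an elementary identity of the shape $\frac{1+y}{1-y} = \sum_{\text{insertion patterns}} (\cdots)$ — this is where the analogue of the trivial observation $\frac{X^{-1}}{1-X^{-1}} = -(1 + \frac{X}{1-X})$ from Lemma~\ref{lem:inv.props} is replaced by a positive ``insertion'' identity. I expect the cleanest route is to prove the per-index factorization as a standalone lemma about flags in a chain, then multiply over $i \in [g]$.

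The main obstacle, I expect, is the combinatorial bookkeeping needed to justify the factorization $\sum_{\sqrt{S}=T} W_S(\mathbf{y}) = \bigl(\prod_i \frac{1+y_{a_i}}{1-y_{a_i}}\bigr) W_T(\mathbf{z})$ rigorously: one must check that the bare-letter insertions for distinct indices $i$ really are independent of one another and independent of the radical flag $T$, and that the geometric series involved sum correctly at each node of $T$ (including the possibility that $a_i$ or $a_{i+g}$ appears in $S$ strictly below the node where $a_i a_{i+g}$ first enters $\sqrt{S}$, or appears in $S$ even though $i \notin \mathcal{I}$ for any radical word of $T$). A careful setup — perhaps indexing insertions by, for each $i$, a pair of elements of the chain $T \cup \{\widehat{0},\widehat{1}\}$ recording where $a_i$ and $a_{i+g}$ are adjoined — should make the independence transparent and reduce the whole statement to the identity $\frac{1+y_{a_i}}{1-y_{a_i}} = \sum (\text{geometric contributions})$ for each fixed $i$, after which the product over $i$ and the sum over $T$ combine to give \eqref{eq:match}.
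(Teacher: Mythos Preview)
Your proposal is correct and follows essentially the same strategy as the paper's proof. Both arguments partition the sum defining $I^{\mathrm{wo}}_{\underline{2g}}(\mathbf{y})$ according to the radical flag $\sqrt{S}$ and then establish, for each fixed radical flag $R$, the identity $\sum_{\sqrt{S}=R} W_S(\mathbf{y}) = \bigl(\prod_{i=1}^g \tfrac{1+y_{a_i}}{1-y_{a_i}}\bigr) W_R(\mathbf{z})$; the paper realizes the per-index factorization by rewriting $\tfrac{1+y_{a_j}}{1-y_{a_j}} = 1 + 2\tfrac{y_{a_j}}{1-y_{a_j}}$ (using $y_{a_j}=y_{a_{g+j}}$) and proving a one-index-at-a-time ``augmentation'' identity by induction on the flag length, which is precisely the ``per-index standalone lemma'' you anticipate.
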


\begin{proof}
  By sorting the flags in $\WO_{\underline{2g}}$ by their radicals, we
  may partition the domain of summation of the LHS of \eqref{eq:match}
  as follows:
  $$\WO_{\underline{2g}}=\bigcup_{R\in\WO_{\underline g}} \{S\in
  \WO_{\underline{2g}} \mid \sqrt S =R\}.$$ The claim is equivalent to
  showing that, for all $R \in \WO_{\underline{g}}$,
  \begin{equation}\label{eq:equiv}
    \sum_{\substack{S\in\WO_{\underline{2g}}:\\\sqrt S =  R} } W_S( \mathbf{y}) = \left(\prod_{i = 1}^g \frac{1 + y_{a_i}}{1 - y_{a_i}}\right) W_{ R} (\mathbf{z})=\prod_{i = 1}^g \left(1+ 2\frac{ y_{a_i}}{1 - y_{a_i}}\right) W_{ R} (\mathbf{z}).
  \end{equation}
  
  Let $S=\{s_1<\dots < s_t\}=\{\sqrt{s_1}\cdot s_1^\prime s_1^{\prime
    \prime} <\dots <\sqrt{s_t} \cdot s_t^\prime s_t^{\prime \prime}
  \}\in\WO_{\underline{2g}}$, where, as above, for $k\in[t]$, $
  s_k^\prime=\prod_{i \in \mathcal{I}^\prime} a_i$, $s_k^{\prime
    \prime}= \prod_{i \in {\mathcal{I}^{\prime \prime}}} a_{i+g}$ and
  $\sqrt{s_k}=\prod_ {i \in {\mathcal{I}_k}} a_{i}a_{i+g}$ is
  radical. Denote $J(S)=\{y_{s_1},\dots,y_{s_t}\}$ and, 
  for $j\in [g]$, set $y_{a_j}J(S):=\{y_{a_j}y\mid y\in J(S)\}$. As
  before we set $s_0 = \widehat{0}$ and
  $s_{t+1} = \widehat{1} = v_{\underline {2g}}$.

We claim that, for all $j\in[g]$ and all $S\in\WO_{\underline{2g}}$
with $\sqrt{S}=R$ and the property that, for all $s\in S$ if $a_j|s$
or $a_{g+j}|s$ then $a_ja_{g+j}|s$, the following identity holds:
\begin{equation}\label{eq:singlej}
    \sum_{\substack{\overline S\in\WO_{\underline{2g}}:\,
        \sqrt{\overline S}= R,\\J(\overline S) \subset J(S) \cup
        y_{a_j}J(S)}} W_{\overline S}( \mathbf{y}) = \left(1+ 2\frac{
      y_{a_j}}{1 - y_{a_j}}\right) W_{ S} (\mathbf{y}).
\end{equation}
It is easy to see that \eqref{eq:equiv} follows by repeated
application of \eqref{eq:singlej} for~$j\in [g]$.

We prove \eqref{eq:singlej} by induction on $t$, the induction base
($t=0$) being trivial; we observe that our assumption on the numerical
data implies that $y_{a_j}=y_{a_{g+j}}$. The RHS may therefore be
written as
$$\left(\prod_{l=1}^i \gp{y_{s_l}}\right) \left( 1 + \gp{y_{a_j}} +
\gp{y_{a_{g+j}}}\right)\left( \prod_{l=i+1}^t \gp{y_{s_l}}\right).$$
The summand 1 in the central factor arises from the flag $\ol{S}=S$,
with $W_S(\bfy) = \prod_{i=1}^t \gp{y_{s_l}}$. The other two summands
account for flags $\ol{S}$ with $J(\ol{S}) = y_{a_j}J(S)$, i.e.\ for
flags whose words differ from those of $S$ by at most an extra factor
$a_j$ or $a_{g+j}$ (but not both, as they share with $S$ the radical
$R$), and which do feature at least one such a ``augmented'' word. We
will call such flags \emph{$a_j$-augmentations} (\emph{of~$S$}). It
remains to show that
\begin{equation}\label{prod}
  \sum_{\substack{\ol{S}\in \WO_{\underline{2g}}: \\\textup{ $a_j$-augmentation of $S$}}} W_{\ol{S}}(\bfy)
  = \left( \prod_{l=1}^t \gp{y_{s_l}}\right) \gp{y_{a_j}};
\end{equation}
the argument for $a_{g+j}$ is identical.

We note that there exists a unique $i\in[t]$ such that $a_j| s_{i+1}$
but $a_j \nmid s_{i}$. For all $a_j$-augmentations~$\ol{S}$ of~$S$,
the last $t-i$ words coincide with $s_{i+1},\dots,s_t$. Therefore
$\prod_{l=i+1}^t \gp{y_{s_l}}$ divides all relevant
$W_{\ol{S}}(\bfy)$. Without loss of generality we may thus assume that
$i=t$, i.e. that \emph{no word of $S$ is divisible by $a_j$}.

The claimed identity in \eqref{prod} will become clear by interpreting
the trivial identity
\begin{multline}\label{eq:braidflag.new}
  \left( \prod_{l=1}^{t} \gp{y_{s_l}}\right)\frac{y_{a_j}}{1-y_{a_j}}
  = \\ \left( \prod_{l=1}^{t-1} \gp{y_{s_l}}\right)
  \left(\frac{y_{a_j}y_{s_t}}{1-y_{a_j}y_{s_t}} +
  \frac{y_{s_t}}{1-y_{s_t}}\frac{y_{a_j}y_{s_t}}{1-y_{a_j}y_{s_t}} +
  \frac{y_{a_j}}{1-y_{a_j}}\frac{y_{a_j}y_{s_t}}{1-y_{a_j}y_{s_t}}
  \right). \end{multline} Informally, the RHS
of~\eqref{eq:braidflag.new} reflects the three alternatives for the
first occurrence of $a_j$ in an $a_j$-augmentation of $\overline S$. %
\begin{enumerate}
\item The first summand arises from the $a_j$-augmentation $\ol{S} =
  \left\{ \dots < s_{t-2} < s_{t-1} < a_js_t\right\}$.
\item The second summand arises from the $a_j$-augmentation
  $\ol{S} = \left\{ \dots < s_{t-1} < s_{t} < a_j s_t\right\}$.
\item The third summand arises from all $a_j$-augmentations of $S$
  whose last \emph{two} words are divisible by $a_j$, the last one
  being $a_js_t$, viz.\ $a_j$-augmentations of
  $S\setminus\{s_t\}$. All the relevant $W_{\ol{S}}(\bfy)$ are
  therefore divisible by $\gp{y_{a_j}y_{s_t}}$. By induction
  hypothesis, \eqref{prod} yields
  $$\left( \prod_{l=1}^{t-1} \gp{y_{s_l}}\right)\gp{y_{a_j}} =
  \sum_{\substack{\ol{S}\in \WO_{\underline{2g}}: \\\textup{
        $a_j$-augmentation of $S \setminus \{s_t\}$}}}
  W_{\ol{S}}(\bfy).
  $$
\end{enumerate}
  This proves the claim, and hence the proposition.
  \end{proof}

\section{Counting $\lri$-ideals in combinatorially defined $\lri$-Lie algebras} \label{sec:main.results}
In this section we compute the $\lri$-ideal zeta functions of
$\lri$-Lie algebras satisfying a certain combinatorial condition
(Hypothesis~\ref{hypothesis}) in terms of the generalized Igusa
functions introduced in Section~\ref{sec:general.igusa}. This prepares
the proof of Theorem~\ref{thm:main.local}, given in
Section~\ref{sec:application}.

\subsection{Informal overview}\label{subsec:overview}
We start by summarizing the principal ideas behind our approach, which
greatly generalize those of~\cite{SV1/15}.  Let $L$ be an $\lri$-Lie
algebra with derived subalgebra $L^\prime = [L,L]$.  As noted in
Section~\ref{subsubsec:methodology}, if $L$ is class-$2$-nilpotent,
then an $\lri$-sublattice $\Lambda \leq L$ is an $\lri$-ideal if
$[\overline{\Lambda}, L] \leq \Lambda \cap L^\prime$, where
$\overline{\Lambda} = (\Lambda + L^\prime)/L^\prime$.  For simplicity
of exposition we will assume, in this overview, that $L^\prime =
Z(L)$, i.e.~that $L$ has no abelian direct summands.  By an argument
going back to~\cite[Lemma~6.1]{GSS/88}, the computation of $\zidealo_L
(s)$ is reduced to a summation over pairs $(\overline{\Lambda},
\mathrm{M})$, where $\overline{\Lambda} \leq L / L^\prime$ and
$\mathrm{M} \leq L^\prime$ are $\lri$-sublattices such that
$[\overline{\Lambda}, L] \leq \mathrm{M}$.  Recall that the
$\Lri$-elementary divisor type of a finite-index $\Lri$-sublattice
$\Lambda \leq \Lri^n$, where $\Lri$ is a compact discrete valuation
ring with maximal ideal $\mathfrak{M}$, is the partition $(\lambda_1,
\dots, \lambda_n)$ such that
$$ \Lri^n / \Lambda \simeq \Lri / \mathfrak{M}^{\lambda_1} \times
\cdots \times \Lri / \mathfrak{M}^{\lambda_n}.$$ Given the
$\lri$-elementary divisor type $\lambda(\overline{\Lambda})$ of
$[\overline{\Lambda},L]$, the lattices $\mathrm{M}$ satisfying this
condition are enumerated by Birkhoff's
formula~\eqref{def:birkhoff.poly}.

An essential ingredient of our method, therefore, is an effective
description of the $\lri$-elemen\-ta\-ry divisor type
$\lambda(\overline{\Lambda})$ in terms of the structure of
$\overline{\Lambda}$.  For the $\lri$-Lie algebras considered in this
paper, this is accomplished as follows.  The quotient $L/ L^\prime$
decomposes, as an $\lri$-module, into a direct sum of $m$ components,
which are viewed as free modules over finite extensions $\Lri_1,
\dots, \Lri_m$ of $\lri$.  For each component, we consider the
$\Lri_i$-elementary divisor type $\nu^{(i)}$ of the $\Lri_i$-lattice
generated by the projection of $\overline{\Lambda}$ onto that
component.  These are the projection data of
Definition~\ref{dfn:projection.data} below.  The crucial
Hypothesis~\ref{hypothesis} requires that the parts of the partition
$\lambda(\overline{\Lambda})$ be given by the minima of term-by-term
comparisons among the elementary divisor types appearing in the
projection data.  Assuming Hypothesis~\ref{hypothesis}, we deduce a
purely combinatorial expression for $\zidealo_L (s)$ in
Proposition~\ref{pro:zeta.rewrite}.

Analogously to the argument of~\cite{SV1/15}, we break up the sum in
Proposition~\ref{pro:zeta.rewrite} into finitely many pieces on which
the Gaussian multinomial coefficients---arising via the factors
$\beta(\nu^{(i)}; q_i)$ and
$\alpha(\lambda(\boldsymbol{\nu}), \mu; q)$, in the notation used
there---and the dual partitions occurring in the
definition~\eqref{def:birkhoff.poly} of
$\alpha(\lambda(\boldsymbol{\nu}), \mu; q)$ are constant.  The sum
over each piece yields a product of Gaussian multinomials and
geometric progressions; these, in turn, are assembled into generalized
Igusa functions introduced in Section~\ref{sec:general.igusa}. As in
\cite{SV1/15}, Dyck words of fixed length turn out to be suitable
indexing objects for the finitely many pieces.

The technical complexity of the current paper, in comparison
to~\cite{SV1/15}, reflects the fact the translation between projection
data and the elementary divisor type $\lambda(\overline{\Lambda})$ is
considerably more involved.  While the data determining
$\lambda(\overline{\Lambda})$ in~\cite{SV1/15} were just a collection
of integers, here they are a collection of partitions (the $\nu^{(i)}$
defined above).  A more sophisticated combinatorial machinery,
viz.\ the weak orders of Section~\ref{subsec:gen.igusa.funeq}, is
required to keep track of the relative sizes of the parts of these
different partitions; this is necessary in order to specify domains of
summation over which the dual partition
$\lambda(\overline{\Lambda})^\prime$ is constant.

In Section~\ref{subsec:projection.data} we define the concept of
projection data and enumerate lattices $\overline{\Lambda} \leq
L/L^\prime$ with fixed projection data.  In
Section~\ref{subsec:rewriting} we introduce and explain the
combinatorial structure behind Hypothesis~\ref{hypothesis} and deduce
Proposition~\ref{pro:zeta.rewrite}, giving a general formula for
$\lri$-ideal zeta functions of $\lri$-Lie algebras satisfying
Hypothesis~\ref{hypothesis}.  In Section~\ref{sec:statement} we state
the section's main result, viz.\ Theorem~\ref{thm:zeta.explicit}, and
prove it modulo an auxiliary claim,
viz.\ Proposition~\ref{thm:beef.is.here}, whose rather technical proof
is given in Section~\ref{subsec:proof.of.beef}.

Throughout, let $\mfo$ be a complete discrete valuation ring with
finite residue field of cardinality~$q$, and let $\mfO_1, \dots,
\mfO_{\pnum}$ be finite extensions of~$\mfo$.  Let $\pi \in \lri$ be a
uniformizer.  For each $i \in [\pnum]$, let $e_i$ be the ramification
index and $f_i$ be the inertia degree of $\Lri_i$ over $\lri$.  Let
$q_i = q^{f_i}$ be the cardinality of the residue field of
$\mfO_i$. For each $i\in[\pnum]$, the local ring $\mfO_i$ is a free
$\mfo$-module of rank $e_i f_i$.  Let $(n_1, \dots, n_\pnum) \in
\N_0^{\pnum}$ and set $n = \sum_{i = 1}^{\pnum} e_i f_i n_i$. Consider
a family $\widetilde{\bsnu}=(\nu^{(1)},\dots,\nu^{(\pnum)})$ of
partitions $\nu^{(i)}$, each with $n_i$ parts.

\subsection{Counting lattices with fixed
  projections} \label{subsec:projection.data} Consider the $\lri$-module
   $$\Omega = \mfO_1^{n_1} \times \cdots \times \mfO_{\pnum}^{n_{\pnum}}$$
and, for each $i \in [\pnum]$, let $\pi_i : \Omega \to \mfO_i^{n_i}$ be
the projection onto the $i$-th component.  Choosing an $\mfO_i$-basis
$(e_1^{(i)}, \dots, e_{n_i}^{(i)})$ of $\mfO_i^{n_i}$ and an
$\mfo$-basis $(\alpha_1^{(i)}, \dots, \alpha_{e_if_i}^{(i)})$ of
each~$\mfO_i$, it is clear that the collection $\left\{ \alpha_j^{(i)}
e_k^{(i)} \right\}_{ijk}$ constitutes an $\mfo$-basis of $\Omega$ that
allows us to identify $\Omega$ with $\mfo^n$.

\begin{dfn} \label{dfn:projection.data}
For an $\mfo$-sublattice $\Lambda \leq \mfo^n$, we write $\nu^{(i)} =
\nu(\pi_i(\Lambda))$ for the elementary divisor type of the
$\mfO_i$-sublattice of $\Lri_i^{n_i}$ generated by
$\pi_i(\Lambda)$. Note that $\nu^{(i)}$ is a partition with $n_i$
parts. The family
$$\bsnu(\Lambda) = (\nu^{(1)},\dots,\nu^{(\pnum)})$$ of partitions is
called the \emph{projection data} of $\Lambda$ with respect to
$\Omega$.
\end{dfn}

For any partition $\nu = (\nu_1,
\dots, \nu_N)$ with $N$ parts, set $J_{\nu} = \{ d \in [N-1] \mid
\nu_d > \nu_{d+1} \}$.  For a variable $Y$, we define
\begin{equation} \label{equ:beta.definition}
\beta(\nu;Y) = \binom{N}{J_{{\nu}}}_{Y^{-1}} Y^{\sum_{d = 1}^{N-1}
  d(N-d)(\nu_d - \nu_{d+1})} \in \Q[Y].
\end{equation}
We observe that $\beta(\nu;Y) = \alpha(\lambda, \nu; Y)$, the
``Birkhoff polynomial'' \eqref{def:birkhoff.poly}, where $\lambda$ is
any partition whose parts are all at least $\nu_1$. It follows that
$\beta(\nu;q)$ enumerates the $\mfo$-sublattices of $\mfo^N$ of
elementary divisor type ${\nu}$. The following proposition, which is
key to our method, generalizes this formula and is analogous
to~\cite[Lemma~2.4]{SV1/15}. Recall the formula \eqref{equ:abelian}
for the zeta function $\zeta_{\mfo^n}(s)$ of an abelian (Lie) algebra
of finite rank over a compact discrete valuation ring.

\begin{pro} \label{pro:sum.projection.data}
Let $\widetilde{\bsnu}=(\nu^{(1)},\dots,\nu^{(\pnum)})$ be as above.
Then
\begin{equation*}
  \sum_{\Lambda \leq \mfo^n \atop \bsnu(\Lambda) = \widetilde{\bsnu}}
  | \mfo^n : \Lambda |^{-s} = \frac{\zeta_{\mfo^n}(s)}{\prod_{i =
      1}^{\pnum} \zeta_{\mfO_i^{n_i}}(s)} \left( \prod_{i = 1}^{\pnum}
  \beta(\nu^{(i)}; q_i) \right) t^{\sum_{i=1}^{\pnum} \left(
    \sum_{j=1}^{n_i} \nu^{(i)}_{j}\right) f_i}.
\end{equation*}

\end{pro}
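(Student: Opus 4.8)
The plan is to stratify the sum over $\mfo$-sublattices $\Lambda \leq \mfo^n$ with prescribed projection data according to the following observation: the isomorphism type of the quotient $\mfo^n / \Lambda$, and hence $|\mfo^n:\Lambda|$, is governed by two ingredients, namely (i) the elementary divisor types $\nu^{(i)}$ of the images $\pi_i(\Lambda)$ in each component $\mfO_i^{n_i}$, and (ii) the ``position'' of $\Lambda$ inside the preimage lattice
$$ \Lambda_0(\widetilde{\bsnu}) := \pi_1^{-1}(\Lambda^{(1)}) \cap \cdots \cap \pi_{\pnum}^{-1}(\Lambda^{(\pnum)}), $$
where $\Lambda^{(i)} \leq \mfO_i^{n_i}$ is \emph{any} fixed $\mfO_i$-sublattice of type $\nu^{(i)}$. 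First I would fix such reference lattices $\Lambda^{(i)}$ and note that every $\Lambda$ with $\bsnu(\Lambda) = \widetilde{\bsnu}$ is, up to the $\prod_i \GL_{n_i}(\mfO_i)$-action, contained in $\Lambda_0(\widetilde{\bsnu})$ with full projection onto each $\pi_i(\Lambda_0) = \Lambda^{(i)}$. The number of sublattices $\Lambda^{(i)} \leq \mfO_i^{n_i}$ of a given type $\nu^{(i)}$ is $\beta(\nu^{(i)}; q_i)$ by the remark following \eqref{equ:beta.definition}, which accounts for the factor $\prod_{i=1}^{\pnum} \beta(\nu^{(i)}; q_i)$.

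Next I would compute the index contribution. The quotient $\mfo^n / \Lambda_0(\widetilde{\bsnu})$ is isomorphic to $\bigoplus_{i=1}^{\pnum} \mfO_i^{n_i}/\Lambda^{(i)}$, which as an $\mfo$-module has $\mfo$-cardinality $\prod_{i=1}^{\pnum} q_i^{\,|\nu^{(i)}|} = \prod_{i=1}^{\pnum} (q^{f_i})^{\sum_j \nu^{(i)}_j}$; this is the origin of the monomial factor $t^{\sum_i (\sum_j \nu^{(i)}_j) f_i}$ after writing $t = q^{-s}$. It remains to enumerate the sublattices $\Lambda \leq \Lambda_0(\widetilde{\bsnu})$ that surject onto each $\Lambda^{(i)}$, weighted by $|\Lambda_0 : \Lambda|^{-s}$. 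Here I would invoke the key structural point: $\Lambda_0(\widetilde{\bsnu})$ fits into a short exact sequence of $\mfo$-modules
$$ 0 \to \mfo^n / \textstyle\bigoplus_i \Lri_i^{n_i} \text{-type adjustments} \longrightarrow \Lambda_0 \xrightarrow{(\pi_i)} \bigoplus_i \Lambda^{(i)} \to 0, $$
and that requiring $\Lambda$ to have full projections means $\Lambda$ is the preimage under $(\pi_i)$ of a sublattice of $\bigoplus_i \Lambda^{(i)}$ of \emph{full} projection onto each factor; but since $\bigoplus_i \Lambda^{(i)} \cong \bigoplus_i \mfO_i^{n_i} \cong \mfo^n$ as $\mfo$-modules (each $\mfO_i$ being free of rank $e_i f_i$ over $\mfo$), the enumeration of such $\Lambda$ with the index weighting is exactly
$$ \frac{\zeta_{\mfo^n}(s)}{\prod_{i=1}^{\pnum} \zeta_{\mfO_i^{n_i}}(s)}, $$
the ratio encoding ``all sublattices of $\mfo^n$'' divided by ``the constraint that the projection to each $\mfO_i^{n_i}$ is already full'' — i.e. one divides out the zeta function of each factor because the projections $\pi_i(\Lambda)$ are pinned down. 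I would make this precise using \eqref{equ:abelian} and the multiplicativity of lattice counting over the $\mfo$-module decomposition $\mfo^n \cong \bigoplus_i \mfO_i^{n_i}$.

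The main obstacle, and the step deserving the most care, is the last one: justifying rigorously that the weighted count of full-projection sublattices of $\Lambda_0(\widetilde{\bsnu})$ equals $\zeta_{\mfo^n}(s) / \prod_i \zeta_{\mfO_i^{n_i}}(s)$, \emph{independently} of $\widetilde{\bsnu}$. The subtlety is that $\Lambda_0(\widetilde{\bsnu})$ is not literally $\bigoplus_i \mfO_i^{n_i}$ but a twisted sublattice thereof; one must check that the $\mfo$-module isomorphism $\Lambda_0(\widetilde{\bsnu}) \cong \mfo^n$ carries ``full projection onto $\Lambda^{(i)}$'' to ``full projection onto the $i$-th $\mfO_i$-component'', so that the conditional count genuinely factors as the global count $\zeta_{\mfo^n}(s)$ divided by the local counts $\zeta_{\mfO_i^{n_i}}(s)$. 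I expect this to follow by a direct change-of-basis argument in the spirit of \cite[Lemma~2.4]{SV1/15}, using the $\mfo$-bases $\{\alpha_j^{(i)} e_k^{(i)}\}$ constructed above and the Smith normal form of each $\Lambda^{(i)}$; the finitely many ramification subtleties are absorbed into the definition of $\zeta_{\mfO_i^{n_i}}$ via \eqref{equ:abelian} applied over $\mfo$.
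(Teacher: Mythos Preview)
Your proposal is correct and follows essentially the same strategy as the paper: stratify by the actual $\mfO_i$-sublattices $H_i = \langle \pi_i(\Lambda)\rangle_{\mfO_i}$, count these via $\beta(\nu^{(i)};q_i)$, and reduce to showing that the weighted count of $\mfo$-lattices with prescribed $\mfO_i$-projections is independent of the $H_i$ and equals $\zeta_{\mfo^n}(s)/\prod_i \zeta_{\mfO_i^{n_i}}(s)$. The only difference is that the paper makes your ``change-of-basis'' step completely explicit: it embeds $\mfO_i \hookrightarrow \Mat_{e_if_i}(\mfo)$, forms the block-diagonal matrix $B = \mathrm{diag}(\iota_i(B_i))$ from $\mfO_i$-bases of the $H_i$, and observes that $B' \mapsto B'B$ gives the required index-scaling bijection---after which summing $\Sigma_H$ over all $H$ yields $\zeta_{\mfo^n}(s) = \Sigma_{\mathbf{0}}\prod_i \zeta_{\mfO_i^{n_i}}(s)$, which is precisely the identity you said you would ``make precise''. (Your short exact sequence is unnecessary, since $\Lambda_0(\widetilde{\bsnu}) = \prod_i \Lambda^{(i)}$ on the nose, and there are no genuine ramification subtleties in this step.)
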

\begin{proof}
  Recall that for every $i \in [\pnum]$ there is a natural embedding of
  rings $\iota_i: \mfO_i \hookrightarrow \Mat_{e_i f_i}(\mfo)$ that
  sends an element $y \in \mfO_i$ to the matrix representing the
  $\mfo$-linear operator $x \mapsto xy$ on $\mfO_i$ with respect to
  the chosen $\lri$-basis $\{ \alpha_j^{(i)} \}_{j=1}^{e_if_i}$.
  Moreover, $\det \iota_i(y) = \mathrm{N}_{\mfO_i / \mfo}(y)$ for all
  $y \in \mfO_i$.  This map extends naturally to an embedding of
  matrix rings $\Mat_{n_i} (\mfO_i) \hookrightarrow \Mat_{e_if_i
    n_i}(\mfo)$ that we continue to denote by $\iota_i$.

Consider the set $\mathcal{H} = \left\{ (H_1, \dots, H_{\pnum}) \mid \forall
i\in[\pnum]: H_i \leq \mfO_i^{n_i} \right\}$.  Given $H \in \mathcal{H}$,
denote
\begin{equation*}
\Sigma_H = \sum_{\Lambda \leq \mfo^n \atop \pi_i(\Lambda) = H_i}
|\mfo^n : \Lambda |^{-s}.
\end{equation*}
Thus
\begin{equation}\label{sum.eq.sum}
  \sum_{\Lambda \leq \mfo^n \atop \bsnu(\Lambda)=\widetilde{\bsnu}} |
  \mfo^n : \Lambda |^{-s} = \sum_{H\in \mathcal{H} \atop \nu(H_i) =
    \nu^{(i)}} \Sigma_H.
\end{equation}

For every $i \in [\pnum]$, let $B_i \in \Mat_{n_i}(\mfO_i)$ be a matrix
whose rows comprise an $\mfO_i$-basis of $H_i$.  Let $B \in
\Mat_n(\mfo)$ be the block-diagonal matrix with blocks $\iota_i(B_i)$.
We observe that the map $\Mat_n(\mfo) \rightarrow \Mat_n(\mfo), \,
B^\prime \mapsto B^\prime B$ induces a bijection between the set of
$\mfo$-lattices $\Lambda \leq \mfo^n$ such that $\pi_i(\Lambda) =
\mfO_i^{n_i}$ for all $i \in [\pnum]$ and the set of lattices $\Lambda
\leq \mfo^n$ such that $\pi_i(\Lambda) = H_i$ for all $i \in [\pnum]$.
Furthermore, $\det B = \prod_{i = 1}^{\pnum} \mathrm{N}_{\mfO_i / \mfo}(\det
B_i)$; cf., for instance, \cite[Theorem 1]{KSW/99}.  The norms
preserve normalized valuation, hence $| \det B|_\mfo = \prod_{i =
  1}^{\pnum} q_i^{-\sum_{j = 1}^{n_i} \nu^{(i)}_{j}}$.  We conclude that
\begin{equation}\label{equ:aux1}
\Sigma_H = t^{\sum_{i,j} \nu^{(i)}_{j} f_i} \Sigma_\bfz = \prod_{i =
  1}^{\pnum} |\mfO_i^{n_i} : H_i|^{-s} \Sigma_\bfz,
\end{equation}
where $\bfz = (\mfO_1^{n_1}, \dots, \mfO_{\pnum}^{n_{\pnum}}) \in \mathcal{H}$.
Thus
\begin{equation}\label{equ:aux2}
\zeta_{\mfo^n}(s) = \sum_{H \in \mathcal{H}} \Sigma_H = \Sigma_\bfz
\sum_{H \in \mathcal{H}} \prod_{i = 1}^{\pnum} |\mfO_i^{n_i} :
H_i|^{-s} = \Sigma_\bfz \prod_{i = 1}^{\pnum} \zeta_{\mfO_i^{n_i}}(s).
\end{equation}
It follows immediately from \eqref{equ:aux1} and \eqref{equ:aux2} that
\begin{equation*}
\Sigma_H = \frac{\zeta_{\mfo^n}(s)}{\prod_{i = 1}^{\pnum} \zeta_{\mfO_i^{n_i}}(s)}t^{\sum_{i,j} \nu^{(i)}_{j} f_i}, 
\end{equation*}
and substitution of this expression into~\eqref{sum.eq.sum} implies our claim.
\end{proof}

\subsection{Rewriting the $\lri$-ideal zeta functions of suitable $\lri$-Lie
  algebras} \label{subsec:rewriting} Now let $L$ be a
class-$2$-nilpotent $\lri$-Lie algebra.  We assume that its derived
subalgebra $L^\prime$ is isolated, viz.\ $L/L'$ is torsion-free. Let
further $L^\prime \subseteq A \subseteq Z(L)$ be a central, isolated
subalgebra.  Suppose that
\begin{equation} \label{equ:abelianization.condition}
L/A \simeq \mfO_1^{n_1} \times \cdots \times \mfO_{\pnum}^{n_{\pnum}}.
\end{equation}
Fixing such an isomorphism, we obtain projections
$\pi_i : L/A \to \mfO_i^{n_i}$ and are in the setting of
Section~\ref{subsec:projection.data}.  Then $c^\prime$ and $c$, in the
notation of Section~\ref{subsec:projection.data}, are the ranks of the
free $\lri$-modules $L^\prime$ and $A$, respectively, whereas
$n = \sum_{i = 1}^{\pnum} n_i e_i f_i = \rk_\lri L/A$. In particular,
$n+c = \rk_\lri L$.

Given an $\mfo$-sublattice $\Lambda \leq L/A$ of finite index, the
commutator $[\Lambda, L]$ is well-defined, as $A$ is central, and of
finite index in~$L^\prime$.  Let $\lambda(\Lambda)$ be the
$\lri$-elementary divisor type of the $\lri$-submodule $[\Lambda, L] \leq L'$.

\begin{dfn}\label{def:partition.ast}
Let ${\nu}^{(1)} = (\nu_{1}^{(1)}, \dots, \nu_{n_1}^{(1)})$ and
${\nu}^{(2)} = (\nu^{(2)}_1, \dots, \nu_{n_2}^{(2)})$ be partitions
with $n_1$ and $n_2$ parts, respectively.  We define ${\nu}^{(1)} \ast
{\nu}^{(2)}$ to be the partition whose $n_1 n_2$ parts are obtained
from the multiset
$$\left\{ \min \{ \nu^{(1)}_{k}, \nu^{(2)}_{\ell} \} \right\}_{k \in [n_1],\;
  \ell \in [n_2]}.$$
  
Given, in addition, $b \in [n_1]$, we define $(\nu^{(1)})^{\ast b}$ to
be the partition whose $\binom{n_1}{b}$ parts are obtained from the
multiset
$$ \left\{ \min \{ \nu^{(1)}_i \mid i \in I \} \right\}_{I \subseteq [n_1], \;|I| = b}.$$
\end{dfn}

We observe that $\ast$ is an associative binary operation on the set
of partitions and that $(\nu^{(1)})^{\ast 2} \neq \nu^{(1)} \ast
\nu^{(1)}$.

\begin{dfn} \label{dfn:lambda.nu}
 Let $\klim \in \N_0$ and fix, for every $k \in [\klim]$, a pair
 $\widetilde{\mathfrak{S}}_k = (\mathfrak{S}_k,
 \underline{\sigma}_k)$, where $\mathfrak{S}_k = \{ s_{k1}, \dots,
 s_{k,\tau_k} \} \subseteq [\pnum]$ is a subset of cardinality
 $\tau_k$ and $\underline{\sigma}_k = (\sigma_{k1}, \dots,
 \sigma_{k,\tau_k}) \in \N^{\tau_k}$.
      
 Given a family $\widetilde{\boldsymbol{\nu}} = (\nu^{(1)}, \dots,
 \nu^{(\pnum)})$ of partitions $\nu^{(i)}$, each with $n_i$ parts,
 define $\lambda(\widetilde{\boldsymbol{\nu}})$ to be the partition
 obtained from the multiset
$$ \bigcup_{k = 1}^{\klim} \left\{ (\nu^{(s_{k1})})^{\ast \sigma_{k1}}
 \ast \cdots \ast (\nu^{(s_{k,\tau_k})})^{\ast \sigma_{k,\tau_k}}
 \right\},$$ where $\{ \nu^{(i)} \}$ denotes the multiset of parts of
 the partition $\nu^{(i)}$ and the union is a union of multisets.
\end{dfn}

We will suppose for the rest of Section~\ref{sec:main.results} that the following assumption on
$(L,A)$ holds.
\begin{hyp}  \label{hypothesis}
 The pairs $\widetilde{\mathfrak{S}}_1, \dots,
 \widetilde{\mathfrak{S}}_{\klim}$ in Definition~\ref{dfn:lambda.nu}
 may be chosen so that for any $\mfo$-sublattice $\Lambda \leq L/A$,
 the equality of partitions $\lambda(\Lambda) =
 \lambda(\boldsymbol{\nu}(\Lambda))$ holds.
 \end{hyp}

Comparing the lengths of the partitions $\lambda(\Lambda)$ and
$\lambda(\boldsymbol{\nu}(\Lambda))$, we find that
Hypothesis~\ref{hypothesis} implies that
$$ c^\prime = \sum_{k = 1}^{\klim} \binom{n_{s_{k1}}}{\sigma_{k1}}
\binom{n_{s_{k2}}}{\sigma_{k2}} \cdots
\binom{n_{s_{k,\tau_k}}}{\sigma_{k,\tau_k}}.$$

\begin{dfn} \label{dfn:m}
Let $\mathfrak{S} = \bigcup_{k = 1}^{\klim} \mathfrak{S}_k \subseteq
[\pnum]$.  Let $m = | \mathfrak{S} |$.  Renumbering the components
in~\eqref{equ:abelianization.condition} if necessary, we may suppose
without loss of generality that $\mathfrak{S} = [m]$.
\end{dfn}

We briefly discuss the motivation for Hypothesis~\ref{hypothesis}.  It
ensures that the elementary divisor type $\lambda(\Lambda)$ depends
only on the projection data $\boldsymbol{\nu}(\Lambda)$ and can be
described combinatorially in terms of $\boldsymbol{\nu}(\Lambda)$, and
that all parts of $\lambda(\Lambda)$ also appear as parts of
$\boldsymbol{\nu}(\Lambda)$.  This assumption is crucial to our method
and enables us to express the $\mfo$-ideal zeta function
$\zidealo_L(s)$ in terms of the generalized Igusa functions of
Definition~\ref{def:igusa.wo}.  A further consequence of
Hypothesis~\ref{hypothesis} is a dichotomy among the components of
$L/A$ in~\eqref{equ:abelianization.condition}.  If, on the one hand,
$i > m$, then the commutator $[\Lambda,L]$ is independent of the
component $\mfO_i^{n_i}$; this means that $\mfO_i^{n_i}$ lies in the
kernel of the projection $\mathrm{pr}: L/A \to L/Z(L)$.  If, on the
other hand, $i\leq m$, then $\mathrm{pr}(\mfO_i^{n_i})$ and
$\mfO_i^{n_i}$ have the same rank as $\mfo$-modules, namely
$n_ie_if_i$.  In particular,
\begin{equation} \label{equ:cocenter}
\sum_{i = 1}^m n_i e_i f_i = \mathrm{rk}_{\mfo} (L/Z(L)).
\end{equation}
This consequence of Hypothesis~\ref{hypothesis} is used in a subtle
but crucial way in the proof of Corollary~\ref{cor:funct.eq}, which
establishes the functional equation satisfied by
$\zeta^{\triangleleft\,\lri}_L(s)$.  Indeed,
Theorem~\ref{thm:zeta.explicit} expresses
$\zeta^{\triangleleft\,\lri}_L(s)$ as a sum of finitely many summands.
The above observation ensures that each summand satisfies a functional
equation with the same symmetry factor.

\begin{rem} \label{rem:direct.product}
We note that, trivially, Hypothesis~\ref{hypothesis} is stable under
direct products.
\end{rem}

\begin{rem}\label{rem:ram}
  Before proceeding, we observe that Hypothesis~\ref{hypothesis}
  constrains the extensions $\mfO_i$ of $\mfo$ to be unramified in
  natural examples, such as the non-abelian examples considered in
  Section~\ref{sec:application}.  Suppose that
  $L = \mcL_1(\mfO_1) \times \cdots \times \mcL_r(\mfO_r)$, where
  $\mcL_i$ is a class-$2$-nilpotent Lie ring and $\mfO_i$ is a finite
  extension of $\mfo$ for every $i \in [r]$.  Suppose that the
  subalgebra $L^\prime \leq A \leq Z(L)$ is of the form
  $A = A_1 \times \cdots \times A_r$, where each $A_i$ is an isolated
  subalgebra of $\mcL_i(\mfO_i)$; this will be true, for instance, if
  $A = L^\prime$ or $A = Z(L)$.  Then
  $L/A \simeq \mcL_1(\mfO_1) / A_1 \times \cdots \times \mcL_r(\mfO_r)
  / A_r$.  Suppose, furthermore, that we decompose
\begin{eqnarray*}
\mcL_1(\mfO_1) / A_1 & \simeq &\mfO_1^{n_1} \times \cdots \times \mfO_1^{n_{N_1}} \\
\mcL_2(\mfO_2) / A_2 & \simeq &\mfO_2^{n_{N_1 + 1}} \times \cdots \times \mfO_2^{n_{N_2}} \\
\vdots & & \vdots \\
\mcL_r(\mfO_r) / A_r & \simeq &\mfO_r^{n_{N_{r-1} + 1}} \times \cdots \times \mfO_r^{n_{N_r}}
\end{eqnarray*}
and consider the projection data with respect to the resulting
decomposition 
$$ L/A \simeq \mfO_1^{n_1} \times \cdots \times \mfO_r^{n_{N_r}}.$$
Here the number of projections is $\pnum = {N_r}$.  Assume that
Hypothesis~\ref{hypothesis} is satisfied.  We claim that $\mfO_i /
\mfo$ is unramified for all $i \in [r]$ such that $\mcL_i$ is not
abelian.

Indeed, fix uniformizers $\Pi_i \in \mfO_i$, let $\tau \in \N$, and
consider the lattice
$$ \Lambda = \Pi_1^\tau \mfO_1^{n_1} \times \cdots \times \Pi_1^\tau
\mfO_1^{n_{N_1}} \times \Pi_2^\tau \mfO_2^{n_{N_1 + 1}} \times \cdots
\times \Pi_r^\tau \mfO_r^{n_{N_r}}.$$ The projection data are
$\nu^{(i)}_j = \tau$ for all $i \in [N_r]$ and all $j \in [n_i]$.
Furthermore, it is clear that
$$ [\Lambda, L] = \Pi_1^\tau [\mcL_1(\mfO_1), \mcL_1(\mfO_1)] \times
\cdots \times \Pi_r^\tau [\mcL_r(\mfO_r), \mcL_r(\mfO_r)].$$ For every
$i \in [r]$, let $b_i$ be the rank of
$[\mcL_i(\mfO_i), \mcL_i(\mfO_i)]$ as an $\mfo$-module.  Then it is
immediate from Lemma~\ref{lem:oedt.to.zp} that the partition
$\lambda(\Lambda)$ is the disjoint union of the sets
$\{ \tau \}_{e_i, f_i}$ (cf.~Definition~\ref{dfn:dictionary}), with
respective multiplicities $b_i$.  Suppose that $\mcL_i$ is not
abelian.  Then $b_i > 0$.  If, in addition, $e_i \geq 2$, then the
elements of $\{ \tau \}_{e_i, f_i}$ are not all equal to $\tau$.
Hence there are parts of $\lambda(\Lambda)$ that do not appear as
parts of the projection data $\widetilde{\boldsymbol{\nu}}$,
contradicting Hypothesis~\ref{hypothesis}.
\end{rem}

\begin{dfn} \label{dfn:dominance.of.partitions}
Set $\varepsilon = c - c^\prime$.  Given partitions $\lambda$ and
$\mu$ with $c^\prime$ and $c$ parts, respectively, we say that $\mu
\leq \lambda$ if $\mu \leq \widetilde{\lambda}$, where
$\widetilde{\lambda}$ is any partition with $c$ parts whose parts
consist of the $c^\prime$ parts of $\lambda$ together with any
$\varepsilon$ integers $\xi_1 \geq \cdots \geq \xi_{\varepsilon} \geq
\mu_1$.  By $\alpha(\lambda, \mu; Y)$ we will mean
$\alpha(\widetilde{\lambda}, \mu; Y)$, the ``Birkhoff polynomial''
\eqref{def:birkhoff.poly}; note that both definitions are independent
of the choice of $\widetilde{\lambda}$.
\end{dfn}

Our objective, which will be attained with Theorem~\ref{thm:zeta.explicit},
is to compute the $\lri$-ideal zeta function of the $\lri$-Lie
algebra~$L$. We maintain the notation from above. Recall, in
particular, that $n = \sum_{i = 1}^{\pnum} e_i f_i n_i$ is the
$\mfo$-rank of $L/A$.  Observe that if $\Lambda \leq L/A$ as above,
then there exists an $\mfo$-sublattice $\mathrm{M} \leq A$ of elementary
divisor type $\mu$ such that $[\Lambda, L] \leq \mathrm{M}$ if and only if $\mu
\leq \lambda(\Lambda)$.  Furthermore, as $L'$ is isolated in $L$, the
number of sublattices $\mathrm{M} \leq A$ of elementary divisor type $\mu$ that
contain $[\Lambda, L]$ is given by $\alpha(\lambda(\Lambda), \mu; q)$.

Recall $m$ from Definition~\ref{dfn:m}.  Given projection data
$\widetilde{\boldsymbol{\nu}} = (\nu^{(1)}, \dots, \nu^{(\pnum)})$,
the partition $\lambda(\widetilde{\boldsymbol{\nu}})$ depends only on
the $m$-tuple $\boldsymbol{\nu} = (\nu^{(1)}, \dots, \nu^{(m)})$.
Thus we will write $\lambda(\boldsymbol{\nu})$ for
$\lambda(\widetilde{\boldsymbol{\nu}})$.

\begin{pro} \label{pro:zeta.rewrite} 
Assuming
  Hypothesis~\ref{hypothesis}, the $\lri$-ideal zeta function of $L$
  is given by
\begin{equation*}
  \zeta^{\triangleleft\,\lri}_L(s) =
  \frac{\zeta_{\mfo^{n}}(s)}{\prod_{i = 1}^{m} \zeta_{\mfO_i^{n_i}}(s)}
  \sum_{\boldsymbol{\nu}, {\mu} \atop {\mu} \leq
    {{\lambda}}(\boldsymbol{\nu})} \left( \prod_{i = 1}^{m}
  \beta(\nu^{(i)} ; q_i) \right) \alpha({{\lambda}}(\boldsymbol{\nu}),
       {\mu};q) (q^{n}t)^{\sum_{k = 1}^{c} \mu_k} t^{\sum_{i = 1}^{m}
         (\sum_{j = 1}^{n_i} \nu^{(i)}_{j})f_i}.
\end{equation*}
Here $\boldsymbol{\nu} = ({\nu}^{(1)}, \dots, {\nu}^{(m)})$ runs over
all $m$-tuples of partitions with $n_1, \dots, n_{m} $ parts,
respectively, whereas $\mu$ runs over all partitions with $c$
parts. The condition $\mu\leq\lambda(\boldsymbol{\nu})$ is to be
understood as in Definition~\ref{dfn:dominance.of.partitions}.
\end{pro}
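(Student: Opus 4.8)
The plan is to adapt the classical reduction of Grunewald, Segal and Smith \cite[Lemma~6.1]{GSS/88} to the present situation and then feed in Proposition~\ref{pro:sum.projection.data} and Hypothesis~\ref{hypothesis}. \emph{Step 1 (parametrising ideals).} Let $\pi\colon L\to L/A$ be the projection. Given a finite-index ideal $I\triangleleft L$, set $\Lambda=(I+A)/A\leq L/A$ and $\mathrm{M}=I\cap A\leq A$, both of finite index. Since $A$ is central, the submodule $[I,L]$ depends only on the image of $I$ in $L/A$, so $[I,L]=[\Lambda,L]\subseteq L'\subseteq A$; hence $I$ is an ideal precisely when $[\Lambda,L]\subseteq\mathrm{M}$. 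The filtration $I\subseteq\pi^{-1}(\Lambda)=I+A\subseteq L$ gives $|L:I|=|L/A:\Lambda|\cdot|A:\mathrm{M}|$. Conversely, fix finite-index $\mfo$-sublattices $\Lambda\leq L/A$ and $\mathrm{M}\leq A$ with $[\Lambda,L]\subseteq\mathrm{M}$; the ideals $I$ with $(I+A)/A=\Lambda$ and $I\cap A=\mathrm{M}$ correspond bijectively to the $\mfo$-module complements of $A/\mathrm{M}$ in $\pi^{-1}(\Lambda)/\mathrm{M}$ (every such complement pulls back to an ideal, because the ideal condition $[\Lambda,L]\subseteq\mathrm{M}$ involves only $\Lambda$ and $\mathrm{M}$). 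As $\pi^{-1}(\Lambda)/\mathrm{M}$ is an extension of the free $\mfo$-module $\Lambda$ by the finite module $A/\mathrm{M}$, it splits, and the set of complements is a torsor under $\Hom_\mfo(\Lambda,A/\mathrm{M})$; since $\Lambda$ is free of rank $n=\rk_\mfo(L/A)$, there are exactly $|A:\mathrm{M}|^{n}$ of them. We therefore obtain
\[
 \zeta^{\triangleleft\,\lri}_L(s)=\sum_{\Lambda\leq L/A}|L/A:\Lambda|^{-s}\sum_{\substack{\mathrm{M}\leq A\\ [\Lambda,L]\subseteq\mathrm{M}}}|A:\mathrm{M}|^{\,n}\,|A:\mathrm{M}|^{-s},
\]
the sums ranging over finite-index $\mfo$-sublattices.

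\emph{Step 2 (sorting by elementary divisor types and projection data).} If $\mathrm{M}$ has elementary divisor type $\mu$ (a partition with $c=\rk_\mfo A$ parts), then $|A:\mathrm{M}|^{\,n}\,|A:\mathrm{M}|^{-s}=(q^{n}t)^{\sum_{k=1}^{c}\mu_k}$, and, as recorded immediately before the statement, the number of such $\mathrm{M}$ containing $[\Lambda,L]$ equals $\alpha(\lambda(\Lambda),\mu;q)$ in the sense of Definition~\ref{dfn:dominance.of.partitions}, which vanishes unless $\mu\leq\lambda(\Lambda)$. By Hypothesis~\ref{hypothesis}, $\lambda(\Lambda)=\lambda(\boldsymbol{\nu}(\Lambda))$, and this partition depends only on $\boldsymbol{\nu}=(\nu^{(1)},\dots,\nu^{(m)})$. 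I would therefore group the outer sum by the projection data $\widetilde{\boldsymbol{\nu}}=\boldsymbol{\nu}(\Lambda)=(\nu^{(1)},\dots,\nu^{(\pnum)})$ and, after identifying $L/A\simeq\mfo^{n}$ via the fixed isomorphism~\eqref{equ:abelianization.condition}, apply Proposition~\ref{pro:sum.projection.data} to rewrite $\sum_{\boldsymbol{\nu}(\Lambda)=\widetilde{\boldsymbol{\nu}}}|L/A:\Lambda|^{-s}$.

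\emph{Step 3 (eliminating the components indexed by $i>m$).} After this substitution, the dependence on $\nu^{(m+1)},\dots,\nu^{(\pnum)}$ is confined to the factor $\prod_{i>m}\frac{\beta(\nu^{(i)};q_i)\,t^{f_i\sum_j\nu^{(i)}_j}}{\zeta_{\mfO_i^{n_i}}(s)}$; summing each $\nu^{(i)}$ over all partitions with $n_i$ parts contributes $1$, since $\sum_{\nu^{(i)}}\beta(\nu^{(i)};q_i)\,t^{f_i\sum_j\nu^{(i)}_j}=\zeta_{\mfO_i^{n_i}}(s)$ --- this is precisely the geometric-progression identity underlying the proof of Proposition~\ref{pro:sum.projection.data}, expressing the $\mfo$-module ideal zeta function of $\mfO_i^{n_i}$ in terms of the $\beta$-polynomials. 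A trivial rearrangement of what remains yields the claimed formula.

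I expect the main obstacle to be the fibre count in Step~1: one must check carefully that every $\mfo$-module complement of $A/\mathrm{M}$ in $\pi^{-1}(\Lambda)/\mathrm{M}$ pulls back to an \emph{ideal} (and not merely a sublattice) of $L$, that distinct complements give distinct ideals, and that the exponent occurring is genuinely the rank $n$ that governs the whole computation, so that $|A:\mathrm{M}|^{\,n}\,|A:\mathrm{M}|^{-s}=(q^nt)^{\sum_k\mu_k}$ with the same $n$ as in the prefactor $\zeta_{\mfo^n}(s)$. Everything else is bookkeeping, given Proposition~\ref{pro:sum.projection.data}, the $\alpha$-count quoted before the statement, and Hypothesis~\ref{hypothesis} (and using the dichotomy, noted there, that forces the commutators to be insensitive to the components indexed by $i>m$).
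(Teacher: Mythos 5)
Your proof is correct and follows essentially the same route as the paper: reduce via the Grunewald--Segal--Smith decomposition to a double sum over $(\Lambda,\mathrm{M})$, group by projection data and apply Proposition~\ref{pro:sum.projection.data}, then sum out the components indexed by $i>m$ against the corresponding factors of $\prod_i\zeta_{\mfO_i^{n_i}}(s)$. The only difference is that you rederive the GSS reduction from scratch (the torsor/complement argument in Step~1, which you flag as the main concern, is indeed sound), whereas the paper simply cites \cite[Lemma~6.1]{GSS/88} for that identity.
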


\begin{proof}
  The quotient $L/A$ has $\lri$-rank $n$, so it follows
  from~\cite[Lemma~6.1]{GSS/88} that
\begin{equation*}
  \zidealo_L(s) = \sum_{\Lambda \leq L/A} | L/A : \Lambda |^{-s}
  \sum_{[\Lambda, L] \leq \mathrm{M} \leq A} | A :\mathrm{M}|^{n-s}.
\end{equation*}
Grouping the lattices $\Lambda \leq L/A$ by their projection data
$\boldsymbol{\nu}(\Lambda)$, we obtain
\begin{equation*}
\zidealo_L(s) = \sum_{\widetilde{\boldsymbol{\nu}}} \sum_{\Lambda \leq L/A \atop
  \bsnu(\Lambda)=\widetilde{\bsnu}} | L/A : \Lambda |^{-s} \sum_{[\Lambda, L] \leq
  \mathrm{M} \leq A} | A :\mathrm{M}|^{n-s}.
\end{equation*}
Setting ${\mu}$ to be the elementary divisor type of $\mathrm{M}$ and
recalling that $\lambda(\boldsymbol{\nu}(\Lambda))$ is the elementary
divisor type of $[\Lambda, L]$ by Hypothesis~\ref{hypothesis}, it now
follows from Proposition~\ref{pro:sum.projection.data} that
\begin{equation*}
  \zeta^{\triangleleft\,\lri}_L(s) =
  \frac{\zeta_{\mfo^{n}}(s)}{\prod_{i = 1}^{\pnum} \zeta_{\mfO_i^{n_i}}(s)}
  \sum_{\widetilde{\boldsymbol{\nu}}, {\mu} \atop {\mu} \leq
    {{\lambda}}(\widetilde{\boldsymbol{\nu}})} \left( \prod_{i = 1}^{\pnum}
  \beta(\nu^{(i)} ; q_i) \right) \alpha({{\lambda}}(\widetilde{\boldsymbol{\nu}}),
       {\mu};q) (q^{n}t)^{\sum_{k = 1}^{c} \mu_k} t^{\sum_{i = 1}^{\pnum}
         (\sum_{j = 1}^{n_i} \nu^{(i)}_{j})f_i}.
\end{equation*}

As we observed above,
$\alpha({{\lambda}}(\widetilde{\boldsymbol{\nu}}),{\mu};q)$ depends
only on the first $m$ components of the
$\pnum$-tuple~$\widetilde{\boldsymbol{\nu}}$.  Hence the sum in the
previous displayed formula may be expressed as
\begin{multline} \label{equ:intermediate}
 \sum_{\boldsymbol{\nu}, {\mu} \atop {\mu} \leq
    {{\lambda}}(\boldsymbol{\nu})} \left( \prod_{i = 1}^{m}
  \beta(\nu^{(i)} ; q_i) \right) \alpha({{\lambda}}(\boldsymbol{\nu}),
       {\mu};q) (q^{n}t)^{\sum_{k = 1}^{c} \mu_k} t^{\sum_{i = 1}^{m}
         (\sum_{j = 1}^{n_i} \nu^{(i)}_{j})f_i} \cdot \\
 \sum_{(\nu^{(m+1)}, \dots, \nu^{(\pnum)})} \left( \prod_{i = m+1}^{\pnum} \beta(\nu^{(i)}; q_i) \right) t^{\sum_{i = m+1}^{\pnum} (\sum_{j=1}^{n_i} \nu_j^{(i)} ) f_i }.
\end{multline}
Observing that 
\begin{equation*}
\sum_{\nu^{(i)}} \beta(\nu^{(i)}; q_i) t^{\sum_{j = 1}^{n_i} \nu_j^{(i)} f_i} = \sum_{\mathrm{M} \leq \mfO_i^{n_i}} [\mfO_i^{n_i} : \mathrm{M}]^{-s} = \zeta_{\mfO_i^{n_i}}(s),
\end{equation*}
we see that the second sum in~\eqref{equ:intermediate} is equal to $\prod_{i = m+1}^{\pnum} \zeta_{\mfO_i^{n_i}}(s)$.  The claim follows.
\end{proof}

Let $w\in \mathcal D_{2c}$ be a Dyck word. Recall, from
Section~\ref{subsec:prelim.dyck}, that $w$ is specified by two
$r$-tuples $(L_1,L_2,\dots,L_r)$ and $(M_1,M_2,\dots,M_r)$ satisfying
$L_i-M_i \geq 0$ for all $i\in[r]$ and $L_r=M_r=c$. Recall further
that $\varepsilon = c - c^\prime$ and define $\widetilde{L}_j = {L}_j
- \varepsilon$ for all $j \in [r]$.

\begin{dfn} \label{dfn:overlaps}
Let $\lambda$ and $\mu$ be partitions with $c^\prime$ and $c$ parts,
respectively, and let $w \in \mathcal{D}_{2 c}$ such that $L_1 \geq
\varepsilon$.  Fix a partition $\widetilde{\lambda}$ with $c$ parts as
above; without loss of generality we may take $\xi_\varepsilon \geq
\max\{ \lambda_1, \mu_1 \}$.  We say that $\lambda$ and $\mu$ have
       {\emph{overlap type $w$}}, written $w(\lambda, \mu) = w$, if
       their parts satisfy the following inequalities:
\begin{multline*} 
 \xi_1 \geq \cdots \geq \xi_{\varepsilon} \geq \lambda_1 \geq \cdots \geq \lambda_{\widetilde{L}_1} \geq \mu_1 \geq \cdots \geq
 \mu_{{M}_1} > \lambda_{\widetilde{L}_1 + 1} \geq \cdots \geq \lambda_{\widetilde{L}_2} \geq \\
 \mu_{{M}_1 + 1} \geq \cdots \geq \mu_{{M}_2} > \cdots >
 \lambda_{\widetilde{L}_{r-1}+1} \geq \cdots \geq \lambda_{\widetilde{L}_r} \geq \mu_{{M}_{r-1} + 1}
 \geq \cdots \geq \mu_{{M}_r}.
\end{multline*}
In other words, $w(\lambda, \mu) = w$ if $w(\widetilde{\lambda},\mu) = w$
in the sense of~\eqref{mult:limi}.  Note that $\widetilde{L}_1 = 0$ may
occur, if $\varepsilon > 0$.  Moreover, the set $\mathcal{D}_{2c}$
depends on $c$ and so on the choice of $A$.
\end{dfn}

Observe that $\lambda \geq \mu$ if and only if there exists a Dyck
word $w \in \mathcal{D}_{2 c}$, necessarily unique, such that
$w(\lambda, \mu) = w$.  Given $w \in \mathcal{D}_{2c}$, we define the
function
\begin{equation} \label{equ:dw.definition}
 D_w(q,t) = \sum_{\boldsymbol{\nu}} \sum_{{\mu} \leq
   {{\lambda}}(\boldsymbol{\nu}) \atop w({{\lambda}(\boldsymbol{\nu})}, {\mu}) = w}
 \left( \prod_{i = 1}^m \beta(\nu^{(i)}; q_i) \right)
 \alpha({{\lambda}}(\boldsymbol{\nu}), {\mu};q) (q^{n}t)^{\sum_{k =
     1}^{c} \mu_k} t^{\sum_{i = 1}^m (\sum_{j = 1}^{n_i}
   \nu^{(i)}_{j})f_i}.
\end{equation}

\begin{rem} \label{rem:dyck.word.epsilon}
  If $w$ is such that $L_1 < \varepsilon$, then the above sum is empty
  and so $D_w(q,t) = 0$.  In addition, the definition of the partition
  ${{\lambda}}(\boldsymbol{\nu})$ will usually impose some equalities
  among its parts.  Thus, it may happen that the set of projection
  data $\boldsymbol{\nu}$ whose associated partition
  ${{\lambda}}(\boldsymbol{\nu})$ is compatible with a given Dyck word
  $w$ is empty even if $w$ satisfies the condition $L_1 \geq
  \varepsilon$ of Definition~\ref{dfn:overlaps}.  We will see examples
  of this phenomenon below, e.g.\ in Section~\ref{subsec:g22}.
\end{rem}

Proposition~\ref{pro:zeta.rewrite} now tells us that
\begin{equation}\label{eq:zeta.dw}
\zidealo_L(s) = \frac{\zeta_{\mfo^{n}}(s)}{\prod_{i = 1}^m
  \zeta_{\mfO_i^{n_i}}(s)}\sum_{w \in \mathcal{D}_{2c}} D_w(q,t).
\end{equation}

\subsection{An explicit expression for
  $\zidealo_L(s)$} \label{sec:statement} Our aim in this section is to
give explicit formulae for the terms $D_w(q,t)$ in
\eqref{eq:zeta.dw}. We will achieve it with
Proposition~\ref{thm:beef.is.here}---a result whose proof will be
given in Section~\ref{subsec:proof.of.beef}---leading to a fully
explicit formula for the relevant $\lri$-ideal zeta functions in
Theorem~\ref{thm:zeta.explicit}.

We maintain the notation of Section~\ref{subsec:rewriting} and resume
some of the notation introduced in Section~\ref{sec:general.igusa}.  Consider the
composition $\underline{n} = (n_1, \dots, n_m)$ and a family $\bsnu =
(\nu^{(1)},\dots,\nu^{(m)})$ of partitions $\nu^{(i)}$, each with
$n_i$ parts. The natural ordering of the elements of the
multiset
\begin{equation*}
  S = \bigcup_{i = 1}^m \left\{ \nu^{(i)}_{j} \mid j \in [n_i] \right\}
\end{equation*}
gives rise to an element $V(\boldsymbol{\nu}) \in
\WO_{\underline{n}}$.  Indeed, the word $v =\prod_{i = 1}^m
a_i^{\alpha_i} \in C_{\underline{n}}$ appears in the flag
$V(\boldsymbol{\nu})$ if and only if any element of the multiset
\begin{equation} \label{equ:sr.def} S_v = \bigcup_{i = 1}^m \left\{
    \nu^{(i)}_{j} \mid j \in [\alpha_i] \right\}
\end{equation} 
is larger than any element of the complement
$S \setminus S_v = \bigcup_{i=1}^m \left\{ \nu^{(i)}_{j} \mid j \in [\alpha_i + 1, n_i] \right\}.$ 
Given a word $v \in C_{\underline{n}}$, let $m(v)$ denote a minimal
element of the multiset $S_v$.  Since, by virtue of
Definition~\ref{dfn:lambda.nu}, all parts of
${\lambda}(\boldsymbol{\nu})$ appear in $S$, we see that if $k \in \N$
satisfies $\lambda^\prime_k > \lambda^\prime_{k+1}$, then necessarily
$k = m(v)$ for some $v \in C_{\underline{n}}$.  Here we denote the
dual partition of $\lambda(\boldsymbol{\nu})$ by $\lambda^\prime$ for
brevity.  Moreover, Hypothesis~\ref{hypothesis} implies that
$\lambda^\prime_{m(v)}$ depends only on $v$ and not on the flag
$V(\boldsymbol{\nu})$ or on the actual values of the parts
$\nu^{(i)}_{j}$.

\begin{dfn} \label{dfn:rad}
Let $v \in C_{\underline{n}}$.
\begin{enumerate} 
\item Set $\ell(v) = \lambda^\prime_{m(v)}$.  In particular, $\ell(v^\prime) \leq \ell(v)$ if $v^\prime \leq v$.
\item We say that $v$ is {\emph{radical}} if
  $\ell(v^\prime) < \ell(v)$ for all proper subwords $v^\prime < v$.
\end{enumerate}
\end{dfn}

Note the following explicit formula for $\ell(v)$.

\begin{lem} \label{lem:length.function}
Let $v = \prod_{i = 1}^m a_i^{\alpha_i} \in C_{\underline{n}}$.  Then
\begin{equation*}
\ell(v) = \lambda(\boldsymbol{\nu})^\prime_{m(v)} = \sum_{k = 1}^{\klim}  \prod_{j = 1}^{\tau_k} \binom{\alpha_{s_{kj}}}{\sigma_{kj}} .
\end{equation*}
\end{lem}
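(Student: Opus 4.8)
The first equality, $\ell(v) = \lambda(\boldsymbol{\nu})^\prime_{m(v)}$, holds by definition (Definition~\ref{dfn:rad}), so the whole content lies in the second equality. I would begin by pinning down $m(v)$: since every $\nu^{(i)}$ is weakly decreasing, the minimum of the multiset $S_v = \bigcup_{i=1}^m \{\nu^{(i)}_j \mid j \in [\alpha_i]\}$ is $\mu := \min\{\nu^{(i)}_{\alpha_i} \mid i \in [m],\ \alpha_i \geq 1\}$, so $m(v) = \mu$. Next I would unwind $\lambda(\boldsymbol{\nu})$ via Definition~\ref{dfn:lambda.nu}: it is assembled from the multiset union, over $k \in [\klim]$, of the parts of $P_k := (\nu^{(s_{k1})})^{\ast \sigma_{k1}} \ast \cdots \ast (\nu^{(s_{k\tau_k})})^{\ast \sigma_{k\tau_k}}$. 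Since $\lambda(\boldsymbol{\nu})^\prime_\mu$ is precisely the number of parts of $\lambda(\boldsymbol{\nu})$ that are at least $\mu$ (counted with multiplicity), this reduces the claim to showing that, for each fixed $k$, the number $N_k$ of parts of $P_k$ that are at least $\mu$ equals $\prod_{j=1}^{\tau_k} \binom{\alpha_{s_{kj}}}{\sigma_{kj}}$.

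To compute $N_k$ I would expand the two operations of Definition~\ref{def:partition.ast} (using that $\ast$ is associative): the parts of $P_k$ are precisely the numbers $\min\{\nu^{(s_{kj})}_i \mid j \in [\tau_k],\ i \in I_j\}$ as $(I_1, \dots, I_{\tau_k})$ ranges over all tuples with $I_j \subseteq [n_{s_{kj}}]$ and $|I_j| = \sigma_{kj}$. Such a number is at least $\mu$ if and only if $\nu^{(s_{kj})}_i \geq \mu$ for every $j$ and every $i \in I_j$; and since each $\nu^{(s_{kj})}$ is weakly decreasing, the set $\{ i \mid \nu^{(s_{kj})}_i \geq \mu\}$ is an initial segment $[d_{kj}]$ of $[n_{s_{kj}}]$, where $d_{kj} := \#\{ i \mid \nu^{(s_{kj})}_i \geq \mu\}$. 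Hence the admissible tuples are counted by $N_k = \prod_{j=1}^{\tau_k} \binom{d_{kj}}{\sigma_{kj}}$.

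It thus remains — and this is the heart of the matter — to prove that $d_{kj} = \alpha_{s_{kj}}$ for all $k, j$. The inequality $d_{kj} \geq \alpha_{s_{kj}}$ is immediate from $\nu^{(s_{kj})}_{\alpha_{s_{kj}}} \geq \mu$, which is part of the description of $\mu$ above. For the reverse I would invoke that $v$ is a word of the flag $V(\boldsymbol{\nu})$: by the defining property of $V(\boldsymbol{\nu})$, any element of $S_v$ strictly exceeds any element of $S \setminus S_v$; as $\nu^{(s_{kj})}_{\alpha_{s_{kj}}+1} \in S \setminus S_v$ (with the convention $\nu^{(i)}_{n_i+1} = 0$) and $\mu \in S_v$, we obtain $\mu > \nu^{(s_{kj})}_{\alpha_{s_{kj}}+1} \geq \nu^{(s_{kj})}_i$ for all $i > \alpha_{s_{kj}}$, so $d_{kj} \leq \alpha_{s_{kj}}$; the degenerate case $\alpha_{s_{kj}} = 0$ goes the same way, with $\nu^{(s_{kj})}_1 \in S \setminus S_v$, forcing $d_{kj} = 0$ and hence the vanishing factor $\binom{0}{\sigma_{kj}}$. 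Substituting $d_{kj} = \alpha_{s_{kj}}$ into the formula for $N_k$ and summing over $k$ yields the lemma. I expect the main difficulty to be purely organisational: carefully matching the iterated $\ast$- and $(\cdot)^{\ast b}$-structure of $P_k$ with tuples of subsets in the second step, and correctly using the flag membership of $v$ in the last step (the identity would fail for a word $v$ not lying in $V(\boldsymbol{\nu})$).
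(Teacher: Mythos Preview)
Your argument is correct and is precisely the unpacking that the paper declines to spell out (its proof reads, in full, ``This is a straightforward consequence of Definition~\ref{dfn:lambda.nu}''). Your observation that the identification $d_{kj}=\alpha_{s_{kj}}$ genuinely uses $v\in V(\boldsymbol{\nu})$ is astute and correct: the equality $\lambda(\boldsymbol{\nu})'_{m(v)}=\sum_k\prod_j\binom{\alpha_{s_{kj}}}{\sigma_{kj}}$ can fail for words not in $V(\boldsymbol{\nu})$, so the paper's assertion just before Definition~\ref{dfn:rad} that $\lambda'_{m(v)}$ is independent of $\boldsymbol{\nu}$ should be read with that caveat; the formula of the lemma then serves as the working definition of $\ell(v)$ for arbitrary $v$.
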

\begin{proof}
This is a straightforward consequence of Definition~\ref{dfn:lambda.nu}.
\end{proof}

\begin{dfn} Let $w \in \mathcal{D}_{2c}$ be a Dyck word {with exactly
    $r$ letter changes from $\bfz$ to $\bfo$}; cf.\
  Section~\ref{subsec:prelim.dyck}. A flag
  $V = \{ v_1 < \cdots < v_t \}$ of elements of $C_{\underline{n}}$ is
  said to be {\emph{compatible}} with $w$, or simply
  $w$-\emph{compatible}, if
  \begin{itemize}
  \item $t = r$,
  \item for all $j \in [r]$, the word $v_j$ is radical and satisfies
    $\ell(v_j) = \widetilde{L}_j$.
  \end{itemize}
\end{dfn} 

\begin{rem} \label{rmk:rho.ir} 
It follows from Hypothesis~\ref{hypothesis} that all parts of
$\boldsymbol{\nu}$ participate in the minima that determine the parts
of $\lambda(\boldsymbol{\nu})$.  Therefore, the maximal word $\prod_{i
  = 1}^m a_i^{n_i}$ is always radical, and $v_r = \prod_{i = 1}^m
a_i^{n_i}$ for any $w$-compatible flag $V$.

In addition, note that if $\varepsilon > 0$, i.e.~if $L^\prime < A$,
then some Dyck words $w \in \mathcal{D}_{2c}$ for which there exist
$w$-compatible flags will satisfy $\tilde{L}_1 = 0$.  In this case,
$v_1 = \varnothing$ for any such flag.
\end{rem}

For $w \in \mathcal{D}_{2c}$, let $\mathcal{F}_w$ denote the set of
$w$-compatible flags. It will be convenient to organize the
information carried by an element of $\mathcal{F}_w$ in matrix form.
Given an element $V = \{ v_1 < \cdots < v_{r} \} \in \mathcal{F}_w$,
we let $v_0$ be the empty word and define $\rho_{ij}$, for $i \in [m]$
and $j \in [r]$, by
\begin{equation*}
\frac{v_j}{v_{j-1}} = \prod_{i = 1}^m a_i^{\rho_{ij}}.
\end{equation*}
In this way, the flag $V$ gives rise to a matrix
${\rho}(V) \in \mathrm{Mat}_{m,r}(\N_0)$.  Conversely, given a
matrix ${\rho} \in \mathrm{Mat}_{m,r}(\N_0)$, we consider
the cumulative sums of its rows: for $i \in [m]$ and $j \in [r]$,
define
\begin{equation}\label{def:rho}
  \Rho_{ij} = \sum_{k = 1}^j \rho_{ik}.
\end{equation}

\begin{dfn} \label{dfn:adm.comp} Let
  $\mathcal M_{\underline{n}, w}\subseteq \mathrm{Mat}_{m,r}(\N_0)$
  be the set of $({\underline{n}} , w)$-\emph{admissible
    compositions}, namely of matrices $\rho$ satisfying the following
  two properties:
\begin{enumerate}
\item $\ell(\prod_{i = 1}^m a_i^{\Rho_{ij}}) = \widetilde{L}_j$ for all $j \in [r]$.
\item The word $\prod_{i = 1}^m a_i^{\Rho_{ij}}$ is radical for all $j \in [r]$.
\end{enumerate}
\end{dfn}

By Remark~\ref{rmk:rho.ir}, these properties imply that $\Rho_{ir} = n_i$ for all $i \in [m]$.
Set $w_j = \prod_{i = 1}^m a_i^{\Rho_{ij}}$ for all $j \in [r]$. It
is easy to see that the map
$\mathcal{F}_w \rarr \mathcal M_{\underline{n}, w}$ given by
$V \mapsto {\rho}$ is a bijection, with inverse
${\rho} \mapsto \left\{w_1 < \dots < w_{r} \right\}$.  Denote
$$\Rho_i = \{ \Rho_{ij} \mid j \in [r] \}$$ for all $i \in [m]$.  For
$j \in [r]$, we denote by $\underline{\rho_j}$ the following composition
with $m$ parts:
\begin{equation} \label{equ:def.kj} \underline{\rho_j} = (\rho_{1j}, \dots,
  \rho_{mj}).
\end{equation}
Recall from Definition~\ref{def:igusa} the notion of Igusa function
and from Definition~\ref{def:igusa.wo} the notion of generalized Igusa
function
$I^{\textup{wo}}_{\underline{\rho_j}}(\bfY;\bfX) \in \Q(Y_1,\dots,Y_{m};(X_v)_{v
  \leq w_j})$.

\begin{dfn} \label{dfn:dwk} Let
  $ \rho \in \mathcal M_{\underline n, w}$.  We define
\begin{multline*}
  D_{w, \rho}(q,t)= \left(\prod_{i=1}^m
  \binom{n_i}{\Rho_i}_{q_i^{-1}}\right) \prod_{j=1}^{r} \left(
  \binom{L_j-M_{j-1}}{L_j-M_j} _{q^{-1}}
  I^{\textup{wo}}_{\underline{\rho_j}}(q_1^{-1}, \dots,
  q_m^{-1};\bfy^{(j)}) \right)\cdot
  \\\prod_{j=1}^{r-1}I_{M_j-M_{j-1}}^\circ (q^{-1};x_{M_{j-1}+1},
  \dots, x_{M_j}) I_{M_r-M_{r-1}}(q^{-1};x_{M_{r-1}+1}, \dots,
  x_{M_r}), \end{multline*} with numerical data defined as
follows.  For a
subword $v=\prod_{i=1}^m a_i^{\alpha_i}$ of $\prod_{i=1}^m
a_i^{\rho_{ij}}$ we set $\alpha_i^{(j)} = \Rho_{i,j-1} + \alpha_i$ and
${v}^{(j)} = v \cdot w_{j-1} = \prod_{i = 1}^m a_i^{\alpha_i^{(j)}}$.
Set
$${\delta_v^{(j)}} = \begin{cases}0, &  \textup{ if }\ell(v^{(j)}) = \ell(w_{j-1}),\\  1,&  \textup{ otherwise,} \end{cases}$$ and define
$$ B_v^{(j)} = \begin{cases}
  \sum_{i = 1}^m f_i\alpha_i (n_i - \alpha_i) , & \textup{ if } {\delta}_v^{(j)} = 0, \\
  \sum_{i = 1}^m f_i\alpha_i^{(j)}(n_i - \alpha_i^{(j)}) , & \textup{
    if }{\delta}_v^{(j)} = 1.
\end{cases}
$$
Finally, we set  
$$y_v^{(j)} = q^{\delta_v^{(j)} M_{j-1}(n + \ell(v^{(j)}) + \varepsilon - M_{j-1}) + B_v^{(j)} } t^{\sum_{i = 1}^m \alpha_i f_i + \delta_v^{(j)}(M_{j-1} + \sum_{i = 1}^m \Rho_{i,j-1} f_i)},$$
where $\ell(v^{(j)})$ is given explicitly by Lemma~\ref{lem:length.function}.  For $k \in [M_{j-1} + 1, M_j]$, we set 
\begin{equation*}
  x_k   =  q^{k(n + L_j - k) + \sum_{i = 1}^m f_i\Rho_{ij}(n_i - \Rho_{ij})} t^{k + \sum_{i = 1}^m f_i \Rho_{ij}}.
\end{equation*}
\end{dfn}

\begin{pro}\label{pro:dwfeq}
The following functional equation holds:
\begin{equation*}
 D_{w, \rho}(q^{-1},t^{-1})= (-1)^{c + \sum_{i = 1}^m n_i} q^{\binom{n+c}{2} - \binom{n}{2} + \sum_{i = 1}^m f_i  \binom{n_i}{2}} t^{c + 2 \sum_{i = 1}^m n_i f_i}
D_{w, \rho}(q,t).
\end{equation*}
\end{pro}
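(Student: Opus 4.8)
The plan is to invert $q$ and $t$ in the product of Definition~\ref{dfn:dwk} and to apply, factor by factor, the functional equations already available: equation~\eqref{eq:funeq.gauss} for the Gaussian binomials $\binom{n_i}{\Rho_i}_{q_i^{-1}}$ (regarded as products of ordinary Gaussian binomials via~\eqref{def:gaussian.multi}) and $\binom{L_j-M_{j-1}}{L_j-M_j}_{q^{-1}}$; equations~\eqref{eq:funeq.igusa} and~\eqref{eq:funeq.igusa.circ} for the classical Igusa functions $I_{M_r-M_{r-1}}$ and $I^\circ_{M_j-M_{j-1}}$; and Theorem~\ref{thm:funeq} for each generalized Igusa function $I^{\textup{wo}}_{\underline{\rho_j}}$. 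What makes this work is that every numerical datum $x_k$ and $y_v^{(j)}$ in Definition~\ref{dfn:dwk} is a Laurent monomial in $q$ and $t$, so the substitution $(q,t)\mapsto(q^{-1},t^{-1})$ replaces it by its reciprocal; since moreover $q_i=q^{f_i}$, this substitution puts each factor of $D_{w,\rho}$ into exactly the shape on the left-hand side of its own functional equation. (If $\widetilde L_1<0$ then $\mathcal M_{\underline{n},w}=\varnothing$ and there is nothing to prove, so we may assume $\widetilde L_1\geq 0$.)

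Applying those equations termwise gives $D_{w,\rho}(q^{-1},t^{-1})=S\cdot D_{w,\rho}(q,t)$ with $S$ a product of symmetry factors, which I would organise into four packets. \emph{Signs}: $I^{\textup{wo}}_{\underline{\rho_j}}$ contributes $(-1)^{\sum_i\rho_{ij}}$ and each of $I^\circ_{M_j-M_{j-1}}$, $I_{M_r-M_{r-1}}$ contributes $(-1)^{M_j-M_{j-1}}$; by $\sum_j\rho_{ij}=\Rho_{ir}=n_i$ and $\sum_j(M_j-M_{j-1})=M_r=c$ these multiply to $(-1)^{c+\sum_i n_i}$. \emph{$q_i$-powers}: $\binom{n_i}{\Rho_i}_{q_i^{-1}}\mapsto q_i^{\deg\binom{n_i}{\Rho_i}}\binom{n_i}{\Rho_i}_{q_i^{-1}}$ with $\deg\binom{n_i}{\Rho_i}=\sum_{j<j'}\rho_{ij}\rho_{ij'}$ (the degree of a Gaussian multinomial being $\sum_{a<b}$ of products of its block sizes), while Theorem~\ref{thm:funeq} contributes $\prod_i q_i^{\binom{\rho_{ij}}{2}}$ from $\prod_i Y_i^{-\binom{\rho_{ij}}{2}}$ at $Y_i=q_i^{-1}$; since $\binom{n_i}{2}=\binom{\sum_j\rho_{ij}}{2}=\sum_j\binom{\rho_{ij}}{2}+\sum_{j<j'}\rho_{ij}\rho_{ij'}$, these two packets combine to $\prod_i q_i^{\binom{n_i}{2}}=q^{\sum_i f_i\binom{n_i}{2}}$. \emph{Last-variable factors}: $I^{\textup{wo}}_{\underline{\rho_j}}$ returns the variable $y^{(j)}$ of its top word $w_j/w_{j-1}$, each $I^\circ_{M_j-M_{j-1}}$ returns $x_{M_j}^{-1}$, and $I_{M_r-M_{r-1}}$ returns $x_{M_r}$. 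Setting $\widetilde x^{(j)}_k=q^{k(n+L_j-k)+\sum_i f_i\Rho_{ij}(n_i-\Rho_{ij})}\,t^{\,k+\sum_i f_i\Rho_{ij}}$, one reads off from Definition~\ref{dfn:dwk}---using $\ell(w_j)=\widetilde L_j$ from Definition~\ref{dfn:adm.comp}, and that for $j\geq 2$ the inequality $\widetilde L_j>\widetilde L_{j-1}$ forces $\delta=1$ while for $j=1$ the factor $M_0=0$ makes $\delta$ irrelevant---that $x_k=\widetilde x^{(j)}_k$ for $k\in[M_{j-1}+1,M_j]$ and that the top-word variable of block $j$ equals $\widetilde x^{(j)}_{M_{j-1}}$. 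Hence this packet telescopes to $\widetilde x^{(1)}_{M_0}\bigl(\prod_{j=1}^{r-1}\widetilde x^{(j+1)}_{M_j}/\widetilde x^{(j)}_{M_j}\bigr)\widetilde x^{(r)}_{M_r}$, and a short computation using $M_0=0$, $L_r=M_r=c$ and $\Rho_{ir}=n_i$ (which kills every $\sum_i f_i\Rho_{ir}(n_i-\Rho_{ir})$ term) evaluates its $t$-exponent to $c+2\sum_i n_i f_i$ and its $q$-exponent to $cn+\sum_{j=1}^{r-1}M_j(L_{j+1}-L_j)$. \emph{Residual $q$-powers}: $q^{(L_j-M_j)(M_j-M_{j-1})}$ from the Gaussian binomial and $q^{\binom{M_j-M_{j-1}}{2}}$ from $I^\circ$, $I$.

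It remains to add the $q$-exponents of the last two packets and verify the total is $cn+\binom c2=\binom{n+c}{2}-\binom n2$. Writing $m_j=M_j-M_{j-1}$ and $\ell_j=L_j-L_{j-1}$, so that $\sum_j m_j=\sum_j\ell_j=c$ and $M_0=L_0=0$, an Abel summation gives $\sum_j(L_j-M_j)m_j=\sum_j(m_j-\ell_j)M_{j-1}$ and $\sum_{j=1}^{r-1}M_j\ell_{j+1}=\sum_j M_{j-1}\ell_j$, so the residual and last-variable $q$-contributions add to $cn+\sum_j m_jM_{j-1}+\sum_j\binom{m_j}{2}$, which equals $cn+\binom c2$ because $\binom{\sum_j m_j}{2}=\sum_j\binom{m_j}{2}+\sum_j m_jM_{j-1}$. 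Together with the $q^{\sum_i f_i\binom{n_i}{2}}$ of the second packet and the sign of the first, this is exactly the claimed symmetry factor. The one genuinely delicate step is the third packet: one must recognise the top-word monomials produced by Theorem~\ref{thm:funeq} as precisely the missing left endpoints $\widetilde x^{(j)}_{M_{j-1}}$ of the Igusa telescopes, which requires keeping the piecewise definitions of $\delta_v^{(j)}$, $B_v^{(j)}$ and $y_v^{(j)}$ under control; once the telescope is set up, the remaining identities are elementary manipulations with the two compositions $(m_j)_j$ and $(\ell_j)_j$ of $c$.
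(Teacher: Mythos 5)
Your proof is correct and follows exactly the approach the paper sketches: apply the functional equations \eqref{eq:funeq.gauss}, \eqref{eq:funeq.igusa}, \eqref{eq:funeq.igusa.circ}, and Theorem~\ref{thm:funeq} factor by factor, using $\Rho_{ir}=n_i$ and the definitions of the numerical data. Your bookkeeping (the four packets, the identification of the top-word variable with $\widetilde{x}^{(j)}_{M_{j-1}}$, the telescope, and the Abel-summation identities on the compositions $(m_j)$ and $(\ell_j)$ of $c$) is a complete and accurate expansion of the computation the paper leaves to the reader.
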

\begin{proof}
  The proof is a straightforward computation using the functional
  equations of
  \begin{enumerate}
  \item Gaussian binomials \eqref{eq:funeq.gauss},
  \item classical Igusa functions \eqref{eq:funeq.igusa},
    \eqref{eq:funeq.igusa.circ}, and
  \item generalized Igusa functions {given in
    Theorem~\ref{thm:funeq}},
  \end{enumerate}
  as well as the definition \eqref{def:rho} of~$\Rho_{ij}$ and the
  observation that $\Rho_{ir} = n_i$ for all $i \in [m]$.
\end{proof}

Recall the functions $D_w(q,t)$ introduced
in~\eqref{equ:dw.definition} and used to describe the $\lri$-ideal
zeta function of $L$ in~\eqref{eq:zeta.dw}.  The following result,
which constitutes the technical heart of the computation of the ideal
zeta function $\zidealo_L(s)$, relates $D_w(q,t)$ with the explicit
functions $D_{w,\rho}(q,t)$ of Definition~\ref{dfn:dwk}.  We defer its
proof to the next section.

\begin{pro} \label{thm:beef.is.here} Let $w \in \mathcal{D}_{2c}$ be a
  Dyck word.  Then
  \begin{equation*}
D_w(q,t) = \sum_{{\rho} \in \mathcal{M}_{\underline{n},w}} D_{w, {\rho}}(q,t).
\end{equation*}
\end{pro}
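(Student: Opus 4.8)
I would prove Proposition~\ref{thm:beef.is.here} by dissecting the sum $D_w(q,t)$ in \eqref{equ:dw.definition} according to the combinatorial data that makes the various Gaussian multinomials and dual-partition exponents locally constant, exactly as outlined in Section~\ref{subsec:overview}. The starting point is the observation that, given projection data $\boldsymbol{\nu}$, the flag $V(\boldsymbol{\nu}) \in \WO_{\underline{n}}$ records the ``overlap pattern'' of the multiset $S = \bigcup_i \{\nu^{(i)}_j\}$, while the radical subwords $w_1 < \cdots < w_r$ that are forced on us by the condition $w(\lambda(\boldsymbol{\nu}),\mu) = w$ — namely those with $\ell(w_j) = \widetilde L_j$ — single out a unique $(\underline n, w)$-admissible composition ${\rho} \in \mathcal M_{\underline n, w}$ via $V \mapsto {\rho}(V)$. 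So the first step is to partition the domain of summation of $D_w$ over $\rho \in \mathcal M_{\underline n, w}$, according to which admissible composition is cut out by the radical words of $V(\boldsymbol{\nu})$ at the ``jump levels'' of $\mu$. This reduces the claim to showing that the $\rho$-part of $D_w$ equals $D_{w,\rho}(q,t)$.

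**The core computation.** Fixing $\rho$, I would then further stratify: between consecutive jump levels $w_{j-1}$ and $w_j$, the flag $V(\boldsymbol{\nu})$ refines $w_{j-1} < w_j$ by some flag of subwords, and summing the factors $\beta(\nu^{(i)};q_i)$ over the partitions $\nu^{(i)}$ whose parts realize a given refinement produces — via the Coxeter-length interpretation \eqref{eq:coxlength} of Gaussian multinomials and the factorization Lemma~\ref{lem:binom} — exactly a product of Gaussian binomials times a geometric-series factor $W_{v^{(j)}}$. Assembling these over all refinements between $w_{j-1}$ and $w_j$ gives the generalized Igusa function $I^{\textup{wo}}_{\underline{\rho_j}}(q_1^{-1},\dots,q_m^{-1};\bfy^{(j)})$, which is the reason that object was defined. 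In parallel, expanding $\alpha(\lambda(\boldsymbol{\nu}),\mu;q)$ using Birkhoff's formula \eqref{def:birkhoff.poly}, together with the overlap-type stratification of Definition~\ref{dfn:overlaps}, and summing over the $\mu$ with $w(\lambda,\mu) = w$, yields the classical Igusa factors $I^\circ_{M_j - M_{j-1}}$ (for $j < r$) and $I_{M_r - M_{r-1}}$, plus the binomials $\binom{L_j - M_{j-1}}{L_j - M_j}_{q^{-1}}$; here one uses that the dual partition $\lambda(\boldsymbol{\nu})^\prime$ is constant on each stratum, with $\ell(v) = \lambda(\boldsymbol{\nu})^\prime_{m(v)}$ given by Lemma~\ref{lem:length.function}. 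The outermost factor $\prod_i \binom{n_i}{\Rho_i}_{q_i^{-1}}$ comes from the ``global'' count of how the parts of each $\nu^{(i)}$ distribute among the $r$ blocks determined by $\Rho_i$. Matching the exponents of $q$ and $t$ on the two sides is a bookkeeping exercise: the exponents $B_v^{(j)}$, $\delta_v^{(j)}$, $y_v^{(j)}$, $x_k$ in Definition~\ref{dfn:dwk} are precisely designed to absorb the contributions of $\beta$, the Birkhoff polynomial $\alpha$, and the monomials $(q^n t)^{\sum_k \mu_k}$ and $t^{\sum_{i} (\sum_j \nu^{(i)}_j) f_i}$, split according to whether a subword $v^{(j)}$ sits at an $\ell$-level strictly above $w_{j-1}$ or not.

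**The main obstacle.** The delicate point is the interaction between the two independent stratifications — the one of the $\boldsymbol{\nu}$'s by the flag $V(\boldsymbol{\nu})$ (governed by weak orders on $C_{\underline n}$) and the one of the pairs $(\lambda,\mu)$ by the Dyck word $w$ (governed by $\mathcal D_{2c}$) — because the partition $\lambda = \lambda(\boldsymbol{\nu})$ that links them is not a free parameter but is determined combinatorially from $\boldsymbol{\nu}$ through the $\ast$-operations of Definition~\ref{dfn:lambda.nu}. One must check carefully that demanding $\ell(w_j) = \widetilde L_j$ and $w_j$ radical (the conditions defining $\mathcal M_{\underline n, w}$) is exactly equivalent to the compatibility of $V(\boldsymbol{\nu})$ with the $\mu$-overlap structure, so that no admissible configuration is counted twice and none is omitted — in particular that the ``distinct'' and ``weakly decreasing'' inequalities in \eqref{mult:limi}/Definition~\ref{dfn:overlaps} get correctly sorted into the strict jumps (handled by the $I^\circ$ factors and the binomial $\binom{L_j - M_{j-1}}{L_j - M_j}_{q^{-1}}$) versus the internal freedom within each block (handled by the generalized Igusa functions). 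This is where Hypothesis~\ref{hypothesis} is used in its full strength: it guarantees that $\lambda^\prime_{m(v)}$ depends only on $v$, which is what makes the stratification well-defined, and that all parts of $\lambda$ are parts of $S$, which is what makes the map $V(\boldsymbol{\nu}) \mapsto \rho(V)$ land in $\mathcal M_{\underline n, w}$. Because the statement is deferred to Section~\ref{subsec:proof.of.beef}, I would expect the actual write-up to be a lengthy but essentially mechanical verification once this equivalence of stratifications is nailed down; the burden is entirely in organizing the combinatorics, not in any single hard estimate.
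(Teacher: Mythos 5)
Your proposal follows essentially the same route as the paper's proof: decompose $D_w$ by the admissible composition $\rho(\boldsymbol{\nu})$ cut out by the radical subwords at the levels $\widetilde{L}_j$ (the paper's $\Delta_{w,\rho}$), then further stratify by the interstitial weak orders in $\WO_{\underline{\rho_j}}$, factor the multinomials with Lemma~\ref{lem:binom}, and reparametrize by successive differences so the exponent sums become geometric progressions that assemble into the generalized and classical Igusa functions of Definition~\ref{dfn:dwk}. One small inaccuracy: the paper's computation of the $\beta$-factors uses Lemma~\ref{lem:binom} directly rather than the Coxeter-length identity \eqref{eq:coxlength} (which is reserved for the functional-equation proof), but this does not change the structure of the argument.
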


\begin{thm} \label{thm:zeta.explicit} The $\lri$-ideal zeta function
  of $L$ is
\begin{equation*}
\zidealo_L(s) = \frac{\zeta_{\mfo^{n}}(s)}{\prod_{i = 1}^m \zeta_{\mfO_i^{n_i}}(s)}\sum_{w \in \mathcal{D}_{2c}} \sum_{{\rho} \in \mathcal{M}_{\underline{n},w}} D_{w, {\rho}}(q,t).
\end{equation*}
\end{thm}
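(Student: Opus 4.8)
The plan is to obtain Theorem~\ref{thm:zeta.explicit} directly by assembling the two preceding structural results. Equation~\eqref{eq:zeta.dw}, which was deduced from Proposition~\ref{pro:zeta.rewrite} by partitioning the pairs $(\boldsymbol{\nu},\mu)$ occurring there according to the overlap type $w(\lambda(\boldsymbol{\nu}),\mu)\in\mathcal{D}_{2c}$, already expresses
$$\zidealo_L(s) = \frac{\zeta_{\mfo^{n}}(s)}{\prod_{i = 1}^m \zeta_{\mfO_i^{n_i}}(s)}\sum_{w \in \mathcal{D}_{2c}} D_w(q,t),$$
a finite sum since $\mathcal{D}_{2c}$ is finite. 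Substituting, for each Dyck word $w$, the identity $D_w(q,t) = \sum_{\rho \in \mathcal{M}_{\underline{n},w}} D_{w,\rho}(q,t)$ supplied by Proposition~\ref{thm:beef.is.here} and collecting the resulting double sum over $w$ and over $\rho\in\mathcal{M}_{\underline{n},w}$ yields precisely the asserted formula. The common prefactor $\zeta_{\mfo^n}(s)/\prod_{i=1}^m\zeta_{\mfO_i^{n_i}}(s)$ is merely carried along, so the entire content of the theorem is the equality $\sum_w D_w(q,t) = \sum_w\sum_\rho D_{w,\rho}(q,t)$. One should note in passing that the ranges on the two sides are compatible even in degenerate cases: by Remark~\ref{rem:dyck.word.epsilon}, a Dyck word $w$ with $L_1<\varepsilon$, or one for which no projection datum has $\lambda(\boldsymbol{\nu})$ of overlap type $w$, contributes $D_w(q,t)=0$, and for such $w$ one has $\mathcal{M}_{\underline{n},w}=\varnothing$, so no spurious terms are introduced.

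Thus the proof of Theorem~\ref{thm:zeta.explicit} itself is immediate once Proposition~\ref{thm:beef.is.here} is available, and the genuine obstacle is that proposition, whose proof is deferred to Section~\ref{subsec:proof.of.beef}. To prove it I would fix $w\in\mathcal{D}_{2c}$ and sort the projection data $\boldsymbol{\nu}$ appearing in~\eqref{equ:dw.definition} by the flag $V(\boldsymbol{\nu})\in\WO_{\underline{n}}$ that the ordering of the multiset of their parts induces. By Hypothesis~\ref{hypothesis} the dual partition $\lambda(\boldsymbol{\nu})^\prime$, and hence the radicality of the words of $V(\boldsymbol{\nu})$ and the values $\ell(v)$, depends only on the flag and not on the actual sizes of the parts $\nu^{(i)}_j$; this is what pins down the flags that can contribute---exactly the $w$-compatible ones---and, via the bijection $\mathcal{F}_w\to\mathcal{M}_{\underline{n},w}$, the admissible composition matrices $\rho$ indexing the sum. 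For a fixed such flag, the remaining sum over the actual values of the $\nu^{(i)}_j$ and over the $\mu_k$ of prescribed overlap type factorizes: the Birkhoff polynomial $\alpha(\lambda(\boldsymbol{\nu}),\mu;q)$ of~\eqref{def:birkhoff.poly} and the factors $\beta(\nu^{(i)};q_i)$ contribute Gaussian multinomials that are constant on the piece, while summation over the free blocks of parts produces geometric progressions; using the Coxeter-length description~\eqref{eq:coxlength} of Gaussian multinomials and the definitions of Section~\ref{sec:general.igusa}, these reassemble into exactly the classical Igusa factors $I^{\circ}$, $I$ and the generalized Igusa factors $I^{\textup{wo}}_{\underline{\rho_j}}$ occurring in $D_{w,\rho}(q,t)$ (Definition~\ref{dfn:dwk}).

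The hard part will be the exponent bookkeeping: verifying that the monomials $y_v^{(j)}$ and $x_k$ prescribed in Definition~\ref{dfn:dwk}, together with the corrections $\delta_v^{(j)}$ and $B_v^{(j)}$ that record whether appending a letter to a word increases $\ell$, reproduce the weights $(q^{n}t)^{\sum_k\mu_k}\,t^{\sum_i(\sum_j\nu^{(i)}_j)f_i}$ of~\eqref{equ:dw.definition} on each piece, and that the Gaussian multinomials split along the nested structure $\Rho_{i1}\le\cdots\le\Rho_{ir}=n_i$ via Lemma~\ref{lem:binom}. Getting these shifts, and the contributions of the inertia degrees $f_i$, exactly right is what makes Section~\ref{subsec:proof.of.beef} technical; everything above it, including the present theorem, is essentially organizational.
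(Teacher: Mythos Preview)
Your proof is correct and matches the paper's own argument exactly: the paper's proof of Theorem~\ref{thm:zeta.explicit} consists of the single sentence ``The claim is immediate from~\eqref{eq:zeta.dw} and Proposition~\ref{thm:beef.is.here}.'' Your additional sketch of how Proposition~\ref{thm:beef.is.here} is established in Section~\ref{subsec:proof.of.beef} is a fair outline of that argument, though note that the paper sorts first by the admissible composition $\rho(\boldsymbol{\nu})$ extracted from the \emph{radical} subflag of $V(\boldsymbol{\nu})$, and only then by the tuple of local weak orders $\boldsymbol{v}(\boldsymbol{\nu})=(v_1,\dots,v_r)$; this two-step refinement is what makes the generalized Igusa functions $I^{\textup{wo}}_{\underline{\rho_j}}$ appear.
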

\begin{proof}
The claim is immediate from~\eqref{eq:zeta.dw} and
Proposition~\ref{thm:beef.is.here}.
\end{proof}

\begin{cor} \label{cor:funct.eq} 
Suppose that the extension $\mfO_i / \mfo$ is unramified for all $i
\in [m]$.  Then the $\lri$-ideal zeta function of $L$ satisfies the
functional equation
\begin{equation*}
\left.\zidealo_L(s)\right\rvert_{q\rarr q^{-1}} =
(-1)^{\mathrm{rk}_{\lri}(L)}q^{\binom{\mathrm{rk}_{\lri}(L)}{2}}t^{\mathrm{rk}_{\lri}(L) + \mathrm{rk}_{\lri}(L/Z(L))} \zidealo_L(s).
\end{equation*}
\end{cor}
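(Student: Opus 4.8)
The plan is to deduce the functional equation directly from the explicit formula of Theorem~\ref{thm:zeta.explicit}, by combining the per-summand functional equation of Proposition~\ref{pro:dwfeq} with an elementary computation for the rational prefactor. First I would record that, by Theorem~\ref{thm:zeta.explicit},
$$
\zidealo_L(s) = P(q,t) \sum_{w \in \mathcal{D}_{2c}} \sum_{\rho \in \mathcal{M}_{\underline{n},w}} D_{w,\rho}(q,t), \qquad P(q,t) := \frac{\zeta_{\mfo^{n}}(s)}{\prod_{i = 1}^m \zeta_{\mfO_i^{n_i}}(s)}.
$$
Since the symmetry factor appearing in Proposition~\ref{pro:dwfeq} depends neither on $w$ nor on $\rho$, it pulls out of the double sum: setting
$$
\gamma(q,t) = (-1)^{c + \sum_{i=1}^m n_i}\, q^{\binom{n+c}{2} - \binom{n}{2} + \sum_{i=1}^m f_i \binom{n_i}{2}}\, t^{c + 2 \sum_{i=1}^m n_i f_i},
$$
one obtains $\sum_{w,\rho} D_{w,\rho}(q^{-1},t^{-1}) = \gamma(q,t) \sum_{w,\rho} D_{w,\rho}(q,t)$ (the identity holding trivially on those $w$ for which $\mathcal{M}_{\underline{n},w} = \varnothing$).

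Next I would compute the functional equation of $P(q,t)$. Because each extension $\mfO_i / \mfo$ is unramified, its residue field has cardinality $q_i = q^{f_i}$, so $q_i^{-s} = t^{f_i}$, and by~\eqref{equ:abelian} both $\zeta_{\mfo^{n}}(s) = \prod_{j=0}^{n-1}(1-q^j t)^{-1}$ and $\zeta_{\mfO_i^{n_i}}(s) = \prod_{j=0}^{n_i-1}(1 - q^{f_i j} t^{f_i})^{-1}$ are products of geometric progressions. Using the elementary identity $\tfrac{1}{1-X^{-1}} = -X \cdot \tfrac{1}{1-X}$ under the substitution $q \mapsto q^{-1}$, $t \mapsto t^{-1}$, a direct computation gives $\zeta_{\mfo^{n}}(s)\big\vert_{q \to q^{-1}} = (-1)^n q^{\binom{n}{2}} t^n \zeta_{\mfo^{n}}(s)$ and, similarly, $\zeta_{\mfO_i^{n_i}}(s)\big\vert_{q \to q^{-1}} = (-1)^{n_i} q^{f_i\binom{n_i}{2}} t^{f_i n_i}\zeta_{\mfO_i^{n_i}}(s)$ for each $i$, whence
$$
P(q^{-1},t^{-1}) = (-1)^{\,n - \sum_{i=1}^m n_i}\, q^{\binom{n}{2} - \sum_{i=1}^m f_i\binom{n_i}{2}}\, t^{\,n - \sum_{i=1}^m f_i n_i}\, P(q,t).
$$

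Finally I would multiply $P(q^{-1},t^{-1})$ by $\gamma(q,t)$ and simplify. The $q$-exponents telescope to $\binom{n+c}{2}$, the signs collapse to $(-1)^{n+c}$, and the $t$-exponent becomes $(n+c) + \sum_{i=1}^m f_i n_i$. The last step looks routine but is really the point: one rewrites these quantities intrinsically using $n + c = \rk_{\lri} L$ by construction, and invokes~\eqref{equ:cocenter} — which holds by Hypothesis~\ref{hypothesis}, combined with the unramifiedness hypothesis ($e_i = 1$ for $i \in [m]$) — to get $\sum_{i=1}^m f_i n_i = \sum_{i=1}^m n_i e_i f_i = \rk_{\lri}(L/Z(L))$. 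Substituting these identities yields
$$
\zidealo_L(s)\big\vert_{q \to q^{-1}} = (-1)^{\rk_{\lri}(L)}\, q^{\binom{\rk_{\lri}(L)}{2}}\, t^{\rk_{\lri}(L) + \rk_{\lri}(L/Z(L))}\, \zidealo_L(s),
$$
as claimed. I expect the only genuine obstacle to be this bookkeeping of exponents, and in particular the role of Hypothesis~\ref{hypothesis}: without the consequence~\eqref{equ:cocenter} the $t$-exponent would only be expressible as $\sum f_i n_i$ rather than as $\rk_{\lri}(L/Z(L))$; every remaining step is a formal manipulation of functional equations already established in the excerpt.
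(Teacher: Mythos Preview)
Your proposal is correct and follows essentially the same approach as the paper: apply Proposition~\ref{pro:dwfeq} to each summand (noting the symmetry factor is independent of $w$ and $\rho$), compute the functional equation of the prefactor $\zeta_{\mfo^n}(s)/\prod_i \zeta_{\mfO_i^{n_i}}(s)$, combine, and then invoke~\eqref{equ:cocenter} together with the unramifiedness assumption ($e_i=1$) to rewrite $\sum_i f_i n_i$ as $\rk_{\lri}(L/Z(L))$. The paper's proof is terser but structurally identical.
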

\begin{proof} 
Recall that $n + c = \mathrm{rk}_{\lri}(L/A) + \mathrm{rk}_{\lri}(A) =
\mathrm{rk}_{\lri}(L)$.  Observe that the symmetry factor in
Proposition~\ref{pro:dwfeq} is independent of $w$ and
${\rho}$. Consequently, the sum $\sum_{w \in \mathcal{D}_{2c}}
\sum_{{\rho} \in \mathcal{M}_{\underline{n},w}} D_{w, {\rho}}(q,t)$
itself satisfies a functional equation with the same symmetry factor.
The remaining factors in Theorem~\ref{thm:zeta.explicit} satisfy
$$\left. \frac{\zeta_{\mfo^{n}}(s)}{\prod_{i = 1}^m
  \zeta_{\mfO_i^{n_i}}(s)}\right\rvert_{q\rarr
  q^{-1}}=\frac{(-1)^{n}q^{\binom{n}{2}}t^{n}}{\prod_{i=1}^{m}(-1)^{n_i}q^{f_i
    \binom{n_i}{2}}t^{n_i f_i}} \cdot \frac{\zeta_{\mfo^{n}}(s)}{\prod_{i =
    1}^m \zeta_{\mfO_i^{n_i}}(s) }.$$ This yields the functional
equation
\begin{equation*}
\left.\zidealo_L(s)\right\rvert_{q\rarr q^{-1}} =
(-1)^{\mathrm{rk}_{\lri}(L)}q^{\binom{\mathrm{rk}_{\lri}(L)}{2}}t^{\mathrm{rk}_{\lri}(L) + \sum_{i = 1}^m n_i f_i} \zidealo_L(s).
\end{equation*}
Since we have assumed that all the extensions $\mfO_i / \mfo$ are
unramified, our claim is now immediate from~\eqref{equ:cocenter}.
\end{proof}

\begin{rem}\label{rem:local.abs.con}
  The explicit formula given in Theorem~\ref{thm:zeta.explicit} allows
  one to determine, in principle, the (local) \emph{abscissa of
    convergence} $\alpha^{\triangleleft\,\lri}_L$ of the $\lri$-ideal
  zeta function $\zeta^{\triangleleft\,\lri}_{L}(s)$, viz.
$$\alpha^{\triangleleft\,\lri}_L := \inf\left\{ \alpha\in \R_{>0} \mid
\zeta^{\triangleleft\,\lri}_{L}(s) \textup{ converges on }\{ s\in\C
\mid \Re(s) > \alpha\} \right\} \in \Q_{>0}.$$

Indeed, if one writes
the rational function $\zeta^{\triangleleft\,\lri}_{L}(s)$ over a common
denominator of the form $\prod_{(a,b)\in I} (1-q^at^b)$, with $a,b$
given by the numerical data given in Definition~\ref{dfn:dwk}, then
  $$\alpha^{\triangleleft\,\lri}_L = \max\left\{ n, \frac{a}{b}\mid
(a,b)\in I\right\}.$$ This reflects the facts that $a/b$ is the
abscissa of convergence of the geometric progression
$q^{a-bs}/(1-q^{a-bs})$ and that each of the $D_w(q,t)$ is a non-negative
linear combination of products of such geometric progressions.
\end{rem}

\begin{rem}
  Observe that if $L$ is replaced by the $\beta$-fold direct product
  $L^\beta$, then $c$ is replaced by $\beta c$, and the number of
  summands on the right-hand side of Theorem~\ref{thm:zeta.explicit}
  grows super-exponentially in $\beta$.  Cancellations may occur, as
  in Remark~\ref{rmk:grenham.explicit} below, that cause the
  complexity of $\zeta_{L^\beta}^{\ideal \mathfrak{o}} (s)$ to grow
  less rapidly with respect to $\beta$; however, explicit computations
  in the case of the Heisenberg Lie algebra suggest that the growth
  can indeed be this rapid.  By contrast, if $L^{\ast \beta}$ is the
  $\beta$-fold amalgamation of $L$ over its derived subring, then the
  complexity of $\zeta_{L^{\ast \beta}}^{\ideal \mathfrak{o}} (s)$
  grows in a precisely controlled way~\cite[Theorem~1.1]{BS/23}.
\end{rem}

\subsection{Proof of Proposition~\ref{thm:beef.is.here}}\label{subsec:proof.of.beef} 
We start with a lemma involving the notions of
Definition~\ref{dfn:rad}.  This observation is simple but crucial
to the method of the article.

\begin{lem} \label{lem:def.radical} Let $v \in C_{\underline{n}}$.
  There is a unique radical subword $\sqrt{v} \leq v$ such that
  $\ell(\sqrt{v}) = \ell(v)$.
\end{lem}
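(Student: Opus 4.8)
The plan is to realise $\sqrt v$ as the unique minimal element of the equi-level set
$$\Sigma_v := \{\, v' \in C_{\underline{n}} \mid v' \leq v \text{ and } \ell(v') = \ell(v)\,\},$$
the decisive point being that $\Sigma_v$ is closed under the meet operation (componentwise minimum of exponent vectors) of the lattice $C_{\underline{n}}$. Granting this, $\Sigma_v$ is a finite, nonempty (as $v \in \Sigma_v$), meet-closed subset, so $\sqrt v := \bigwedge_{v' \in \Sigma_v} v'$ again lies in $\Sigma_v$ and is its unique minimal element; the claim then follows quickly.

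First I would fix the bookkeeping. By Lemma~\ref{lem:length.function}, writing $v = \prod_{i=1}^m a_i^{\alpha_i}$ we have $\ell(v) = \sum_{k=1}^{\klim} T_k(v)$ with $T_k(v) := \prod_{j=1}^{\tau_k} \binom{\alpha_{s_{kj}}}{\sigma_{kj}}$. Each $T_k$ is a non-negative, coordinatewise non-decreasing function on $C_{\underline{n}}$, being a product of maps $x \mapsto \binom{x}{\sigma}$ that are non-decreasing on $\N_0$ and injective on their nonvanishing locus $\{x \geq \sigma\}$; this recovers the monotonicity of $\ell$ recorded in Definition~\ref{dfn:rad}(1). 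The elementary but crucial observation is then: \emph{if $v' \leq v$ and $\ell(v') = \ell(v)$, then $T_k(v') = T_k(v)$ for every $k \in [\klim]$}. Indeed, $0 \leq T_k(v) - T_k(v')$ for every $k$, and these differences sum to $\ell(v) - \ell(v') = 0$.

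The hard part will be the meet-closure of $\Sigma_v$. Let $u_1 = \prod_i a_i^{\alpha_i}$ and $u_2 = \prod_i a_i^{\beta_i}$ lie in $\Sigma_v$, and write $v = \prod_i a_i^{\gamma_i}$; I would show $T_k(u_1 \wedge u_2) = T_k(v)$ for every $k$, which, since $u_1 \wedge u_2 \leq v$, places $u_1 \wedge u_2 \in \Sigma_v$. Fix $k$. If $T_k(v) = 0$, then $T_k(u_1 \wedge u_2) \leq T_k(v) = 0$ by monotonicity. If $T_k(v) > 0$, then $\gamma_{s_{kj}} \geq \sigma_{kj}$ for all $j$, and by the observation above $\prod_j \binom{\alpha_{s_{kj}}}{\sigma_{kj}} = \prod_j \binom{\gamma_{s_{kj}}}{\sigma_{kj}}$; this is a product of positive factors, each no larger than the corresponding factor on the right, equal to that product, whence the factorwise equalities $\binom{\alpha_{s_{kj}}}{\sigma_{kj}} = \binom{\gamma_{s_{kj}}}{\sigma_{kj}}$ (in particular $\alpha_{s_{kj}} \geq \sigma_{kj}$). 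Injectivity of $x \mapsto \binom{x}{\sigma_{kj}}$ on $\{x \geq \sigma_{kj}\}$ now gives $\alpha_{s_{kj}} = \gamma_{s_{kj}}$, and symmetrically $\beta_{s_{kj}} = \gamma_{s_{kj}}$, for all $j$; hence $\min(\alpha_{s_{kj}}, \beta_{s_{kj}}) = \gamma_{s_{kj}}$ for all $j$ and $T_k(u_1 \wedge u_2) = T_k(v)$. This monotone/injectivity reconciliation is the only place where the precise shape of $\ell$ as a sum of \emph{products of binomial coefficients} is used, and the statement would be false for a general monotone function.

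Finally I would collect the consequences. As noted, $\sqrt v \in \Sigma_v$, so $\sqrt v \leq v$ and $\ell(\sqrt v) = \ell(v)$. Moreover $\sqrt v$ is radical: any proper subword $w < \sqrt v$ satisfies $\ell(w) \leq \ell(\sqrt v) = \ell(v)$, and equality would put $w \in \Sigma_v$ strictly below its minimal element $\sqrt v$---impossible---so $\ell(w) < \ell(\sqrt v)$. For uniqueness, if $u \leq v$ is radical with $\ell(u) = \ell(v)$, then $u \in \Sigma_v$, whence $\sqrt v \leq u$; were this strict, radicality of $u$ would yield $\ell(\sqrt v) < \ell(u) = \ell(v) = \ell(\sqrt v)$, a contradiction. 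Thus $u = \sqrt v$.
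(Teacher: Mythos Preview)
Your proof is correct and rests on the same key observation as the paper's: if $v' \leq v$ and $\ell(v') = \ell(v)$, then for every $k$ with $T_k(v) > 0$ the exponents of $v'$ at the positions $s_{kj}$ must equal those of $v$. The paper packages this as an explicit construction $\sqrt{v} = \prod_{i \in \mathfrak{S}_v} a_i^{\alpha_i}$ (where $\mathfrak{S}_v = \bigcup_{T_k(v)>0} \mathfrak{S}_k$) and the characterisation $\Sigma_v = [\sqrt{v},v]$, whereas you phrase it as meet-closure of $\Sigma_v$ and take $\sqrt{v} = \bigwedge \Sigma_v$; these are the same argument in slightly different dress.
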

\begin{proof} 
Suppose $v = \prod_{i = 1}^m a_i^{\alpha_i}$.  If a binomial
coefficient $\binom{\alpha}{\sigma}$ is positive, then it will
decrease if $\alpha$ is decreased.  It follows that if the $k$-th term
in the sum in the statement of Lemma~\ref{lem:length.function} is
positive, then in any subword $v^\prime \leq v$ satisfying
$\ell(v^\prime) = \ell(v)$ all the variables $a_{s_{kj}}$ must appear
with exponent $\alpha_{s_{kj}}$.  Hence we are led to define the set
$$ \mathcal{K}_v = \left\{ k \in [\klim] \mid \alpha_{s_{kj}} \geq
\sigma_{kj} \, \text{for all} \, j \in [\tau_k] \right\}.$$
Furthermore, we put $\mathfrak{S}_v = \bigcup_{k \in \mathcal{K}_v}
\mathfrak{S}_k$ and finally define $\sqrt{v} = \prod_{i \in
  \mathfrak{S}_v} a_i^{\alpha_i}$.  It is clear from the preceding
discussion that a subword $v^\prime \leq v$ satisfies $\ell(v^\prime)
= \ell(v)$ if and only if $\sqrt{v} \leq v^\prime \leq v$.  The
claimed existence and uniqueness follow.
\end{proof}

\begin{cor} \label{lem:radical.well.def}
Suppose that $v_1 < v_2$ are two elements of $C_{\underline{n}}$ such
that $\ell(v_1) = \ell(v_2)$.  Then $\sqrt{v_1} = \sqrt{v_2}$.
\end{cor}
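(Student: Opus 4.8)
The plan is to derive this corollary as an immediate consequence of Lemma~\ref{lem:def.radical}, and in particular of the characterization established in its proof: for any $v \in C_{\underline{n}}$, a subword $v' \le v$ satisfies $\ell(v') = \ell(v)$ if and only if $\sqrt{v} \le v' \le v$.

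First I would apply this characterization with $v = v_2$ and $v' = v_1$. Since $v_1 \le v_2$ and, by hypothesis, $\ell(v_1) = \ell(v_2)$, we obtain $\sqrt{v_2} \le v_1 \le v_2$; in particular, $\sqrt{v_2}$ is a subword of $v_1$. By construction $\sqrt{v_2}$ is radical, and by Lemma~\ref{lem:def.radical} we have $\ell(\sqrt{v_2}) = \ell(v_2) = \ell(v_1)$. Thus $\sqrt{v_2}$ is a radical subword of $v_1$ whose length-function value equals $\ell(v_1)$. Since Lemma~\ref{lem:def.radical}, now applied to $v_1$, guarantees that there is a \emph{unique} radical subword of $v_1$ with this property, namely $\sqrt{v_1}$, we conclude that $\sqrt{v_1} = \sqrt{v_2}$, as claimed.

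I do not expect any genuine obstacle here; the only point requiring care is to invoke the uniqueness clause of Lemma~\ref{lem:def.radical} in the right direction, i.e.\ applied to $v_1$ rather than to $v_2$. For completeness one could also give a hands-on argument using the explicit description $\sqrt{v} = \prod_{i \in \mathfrak{S}_v} a_i^{\alpha_i}$ with $\mathfrak{S}_v = \bigcup_{k \in \mathcal{K}_v} \mathfrak{S}_k$ from the proof of Lemma~\ref{lem:def.radical}: writing $v_1 = \prod_i a_i^{\alpha_i}$ and $v_2 = \prod_i a_i^{\beta_i}$ with $\alpha_i \le \beta_i$, the strict monotonicity of $\binom{\alpha}{\sigma}$ in $\alpha$ for $\alpha \ge \sigma \ge 1$, together with the formula of Lemma~\ref{lem:length.function} and the equality $\ell(v_1) = \ell(v_2)$, forces $\mathcal{K}_{v_1} = \mathcal{K}_{v_2}$ and $\alpha_i = \beta_i$ for all $i \in \mathfrak{S}_{v_1} = \mathfrak{S}_{v_2}$, whence $\sqrt{v_1} = \sqrt{v_2}$. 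The argument via uniqueness is shorter and is the one I would present.
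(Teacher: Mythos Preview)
Your argument is correct and is exactly the unpacking of the paper's one-line proof, which merely says the claim is immediate from the construction of $\sqrt{v}$ in the proof of Lemma~\ref{lem:def.radical}; you use precisely the characterization $\ell(v')=\ell(v)\iff \sqrt{v}\le v'\le v$ established there, together with the uniqueness clause applied to~$v_1$. Your optional hands-on alternative via $\mathcal{K}_v$ and $\mathfrak{S}_v$ is also fine and matches the explicit construction in that proof.
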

\begin{proof}
This is immediate from the construction of $\sqrt{v}$ in the proof of
Lemma~\ref{lem:def.radical}.
\end{proof}

Fix a Dyck word $w \in \mathcal{D}_{2c}$.  We aim to evaluate the
function $D_w(q,t)$ of~\eqref{equ:dw.definition}.  Let
$\boldsymbol{\nu} = ({\nu}^{(1)}, \dots, {\nu}^{(m)})$ be an $m$-tuple
of partitions, where, for each $i \in [m]$, the partition
${\nu}^{(i)}$ has $n_i$ parts.  Let ${\mu}$ be a partition with $c$
parts such that ${\mu} \leq {\lambda}(\boldsymbol{\nu})$ and
$w({\lambda}(\boldsymbol{\nu}), {\mu}) = w$, in the sense of
Definitions~\ref{dfn:dominance.of.partitions} and~\ref{dfn:overlaps}.
To simplify the notation, we write ${\lambda}$ for
${\lambda}(\boldsymbol{\nu})$.

Now let $\{ L_j, M_j \}_{j \in [r]}$ be the parameters associated with
the Dyck word $w$.  Recall that we have set $L_0 = M_0 = 0$.  It
follows from the assumption $w({\lambda}, {\mu}) = w$ that
$\lambda_{\widetilde{L}_j} > \lambda_{\widetilde{L}_j + 1}$ for all
$j \in [r-1]$, hence that all the positive $\widetilde{L}_j$ appear as
parts of the dual partition ${\lambda}^\prime$.  By the observations
before Definition~\ref{dfn:rad}, there exists a subflag
$\kappa_1 < \kappa_2 \cdots < \kappa_r$ of $V(\boldsymbol{\nu})$ such
that $\ell(\kappa_j) = \widetilde{L}_j$ for every $j \in [r]$; if
$\widetilde{L}_1 = 0$, then we may take $\kappa_1 = \varnothing$.
This subflag need not be unique, and its constituent words need not be
radical.  However, the flag
$\sqrt{\kappa_1} < \cdots < \sqrt{\kappa_r}$ is well-defined by
Corollary~\ref{lem:radical.well.def}.  Moreover, it is clear that this
flag is an element of $\mathcal{F}_w$ and thus corresponds to an
$(\underline{n}, w)$-admissible composition
${\rho}(\boldsymbol{\nu}) \in \mathcal{M}_{\underline{n}, w}$.

For every ${\rho} \in \mathcal{M}_{\underline{n},w}$ we define the function
\begin{equation} \label{equ:deltaw.definition}
 \Delta_{w, {\rho}}(q,t) = \sum_{\boldsymbol{\nu} \atop
   {\rho}(\boldsymbol{\nu}) = {\rho}} \sum_{{\mu} \leq
   \lambda(\boldsymbol{\nu}) \atop w({{\lambda}}, {\mu}) = w} \left(
 \prod_{i = 1}^m \beta({\nu}^{(i)};q_i) \right)
 \alpha({{\lambda}}(\boldsymbol{\nu}), {\mu};q) (q^{n}t)^{\sum_{k
     = 1}^{c} \mu_k} t^{\sum_{i = 1}^m \sum_{j = 1}^{n_i}
   \nu^{(i)}_{j}}.
\end{equation}
Clearly,
$D_w(q,t) = \sum_{{\rho} \in \mathcal{M}_{\underline{n}, w}}
\Delta_{w, {\rho}}(q,t)$.  Hence, to prove
Proposition~\ref{thm:beef.is.here} it suffices to show the following:

\begin{lem} \label{lem:most.technical.lemma}
The equality $\Delta_{w, {\rho}}(q,t) = D_{w, {\rho}}(q,t)$ holds for all ${\rho} \in \mathcal{M}_{\underline{n},  w}$.  
\end{lem}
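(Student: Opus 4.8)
The plan is to stratify the summation defining $\Delta_{w,\rho}(q,t)$ so that, on each stratum, the Gaussian multinomials hidden inside the factors $\beta(\nu^{(i)};q_i)$ and $\alpha(\lambda(\boldsymbol{\nu}),\mu;q)$, as well as the dual partition $\lambda(\boldsymbol{\nu})^\prime$, are constant; the residual sums over the numerical values of the parts $\nu^{(i)}_j$ and $\mu_k$ then become products of geometric series which, upon reassembly, reproduce the generalized Igusa functions, the classical Igusa functions $I^\circ$ and $I$, and the Gaussian binomials occurring in $D_{w,\rho}(q,t)$.

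First I would fix the combinatorial bookkeeping attached to a family $\boldsymbol{\nu}$ with $\rho(\boldsymbol{\nu})=\rho$. Let $\widehat{0}=w_0<w_1<\cdots<w_r$ be the $w$-compatible flag associated with $\rho$, where $w_j=\prod_{i=1}^m a_i^{\Rho_{ij}}$. By construction the element $V(\boldsymbol{\nu})\in\WO_{\underline{n}}$ contains each $w_j$, so it decomposes as a concatenation of subflags $V_j$ lying in the intervals $[w_{j-1},w_j]$ of $C_{\underline{n}}$; translating by $w_{j-1}$ identifies each $V_j$ with a flag $\widehat{V}_j\in\WO_{\underline{\rho_j}}$. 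Via Lemma~\ref{lem:def.radical} and Corollary~\ref{lem:radical.well.def}, the condition $\rho(\boldsymbol{\nu})=\rho$ pins down the radical of the word of $V(\boldsymbol{\nu})$ of $\ell$-value $\widetilde{L}_j$ to be $w_j$ for each $j$, which translates into admissibility constraints on the $\widehat{V}_j$ of exactly the kind encoded by the domain of summation of $I^{\textup{wo}}_{\underline{\rho_j}}$; the extremal word $w_r=\prod_i a_i^{n_i}$ is forced by Remark~\ref{rmk:rho.ir}.

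Next I would carry out the summation in two stages. For a fixed flag $V(\boldsymbol{\nu})$ --- equivalently, for fixed $\widehat{V}_1,\dots,\widehat{V}_r$ --- the descent set $J_{\nu^{(i)}}$ of each $\nu^{(i)}$ decomposes along the blocks, and Lemma~\ref{lem:binom} splits the Gaussian multinomial part $\binom{n_i}{J_{\nu^{(i)}}}_{q_i^{-1}}$ of $\beta(\nu^{(i)};q_i)$ as $\binom{n_i}{\Rho_i}_{q_i^{-1}}$ times a product of blockwise Gaussian multinomials; by~\eqref{eq:coxlength} the latter are the Coxeter-theoretic avatar of the factors $\binom{\underline{\rho_j}}{\widehat{V}_j}_{\bfY}$ in Definition~\ref{def:igusa.wo}. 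Summing the residual $q$- and $t$-powers over the numerical values of the $\nu^{(i)}_j$, subject only to the weak inequalities prescribed by $V(\boldsymbol{\nu})$ and to the strict inequalities between consecutive blocks forced by $w(\lambda,\mu)=w$, then yields, block by block, a geometric-series evaluation of $I^{\textup{wo}}_{\underline{\rho_j}}$ at the monomials $\bfy^{(j)}$; here the values $\ell(v^{(j)})$, and hence the exponents $\delta_v^{(j)}$, $B_v^{(j)}$ and $y_v^{(j)}$ of Definition~\ref{dfn:dwk}, are computed from Lemma~\ref{lem:length.function}. In parallel, with $\lambda=\lambda(\boldsymbol{\nu})$ now of fixed block shape --- in particular with fixed dual partition $\lambda^\prime$ --- sorting the partitions $\mu\leq\lambda$ by overlap type $w$ as in Definition~\ref{dfn:overlaps} and summing $\alpha(\lambda,\mu;q)(q^n t)^{\sum_k\mu_k}$, whose Gaussian-binomial part is constant on this stratum by~\eqref{def:birkhoff.poly}, produces the binomials $\binom{L_j-M_{j-1}}{L_j-M_j}_{q^{-1}}$ together with the product $\prod_{j=1}^{r-1}I^\circ_{M_j-M_{j-1}}(q^{-1};x_{M_{j-1}+1},\dots,x_{M_j})\cdot I_{M_r-M_{r-1}}(q^{-1};x_{M_{r-1}+1},\dots,x_{M_r})$, by an argument parallel to that of \cite{SV1/15}. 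Assembling the contributions and matching the accumulated $q$- and $t$-powers against the numerical data $x_k$ and $y_v^{(j)}$ of Definition~\ref{dfn:dwk} gives $\Delta_{w,\rho}(q,t)=D_{w,\rho}(q,t)$.

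The main obstacle is the passage from the projection data $\boldsymbol{\nu}$ to the elementary divisor type $\lambda(\boldsymbol{\nu})$ and its dual partition: one must check that the chosen stratification is fine enough that $\lambda(\boldsymbol{\nu})^\prime$ is genuinely constant on each piece --- so that both the factor $\alpha(\lambda,\mu;q)$ and the indexing of the variables $x_k$ stabilise --- yet coarse enough that the residual sums remain geometric. Lemma~\ref{lem:length.function}, the associativity of $\ast$, and the radical machinery of Lemma~\ref{lem:def.radical} make this possible, but the interleaving of the parts coming from distinct components $\nu^{(i)}$ --- a phenomenon with no counterpart in \cite{SV1/15}, where the analogous data are integers rather than partitions --- is exactly what forces the use of the full weak-order complex $\WO_{\underline{\rho_j}}$ and of the generalized Igusa functions in place of mere subsets and classical Igusa functions; pushing this interleaving through the exponent computations of Definition~\ref{dfn:dwk} is the delicate part of the argument.
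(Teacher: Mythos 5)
Your overall plan is the one the paper implements: stratify $\Delta_{w,\rho}$ by weak-order data attached to the projection data, split the Gaussian multinomials inside $\beta$ and $\alpha$ with Lemma~\ref{lem:binom}, evaluate the residual sums as geometric series, and reassemble them into the factors $I^{\textup{wo}}_{\underline{\rho_j}}$, $I$, $I^{\circ}$ and the Gaussian binomials of Definition~\ref{dfn:dwk}. You also correctly single out the role of Lemmata~\ref{lem:def.radical}, \ref{lem:radical.well.def}, \ref{lem:length.function} and the parallel with~\cite{SV1/15}.

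There is, however, a genuine flaw in the stratification step. You assert that ``$V(\boldsymbol{\nu})$ contains each $w_j$, so it decomposes as a concatenation of subflags $V_j$ lying in the intervals $[w_{j-1},w_j]$.'' This is false in general: $w_j = \sqrt{\kappa_j}$ for some $\kappa_j\in V(\boldsymbol{\nu})$ with $\ell(\kappa_j)=\widetilde{L}_j$, but if $\kappa_j$ is not radical then $w_j<\kappa_j$ need not satisfy the weak-order condition defining $V(\boldsymbol{\nu})$, and $V(\boldsymbol{\nu})$ may contain words incomparable with $w_j$. Concretely, take $L=\mathfrak{h}(\lri)\times\mathfrak{h}(\lri)$ so that $\underline{n}=(1,1,1,1)$ and $\ell(a_1^{\alpha_1}a_2^{\alpha_2}a_3^{\alpha_3}a_4^{\alpha_4})=\alpha_1\alpha_3+\alpha_2\alpha_4$, and take $\nu^{(1)}_1=3$, $\nu^{(2)}_1=5$, $\nu^{(3)}_1=3$, $\nu^{(4)}_1=1$. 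Then $V(\boldsymbol{\nu})=\{a_2 < a_1a_2a_3 < a_1a_2a_3a_4\}$; for a Dyck word with $\widetilde{L}_1=1$ one has $\kappa_1=a_1a_2a_3$ and $w_1=\sqrt{\kappa_1}=a_1a_3$, which does \emph{not} belong to $V(\boldsymbol{\nu})$, while $a_2\in V(\boldsymbol{\nu})$ is incomparable with $w_1$, so your interval decomposition breaks down. This is exactly the inter-block interleaving you mention at the end: parts of $\boldsymbol{\nu}$ that do not contribute to $\lambda(\boldsymbol{\nu})$ (the ones with $\delta^{(j)}_v=0$) can exceed parts that do. The paper's proof sidesteps the problem by defining the blockwise weak orders $v_j(\boldsymbol{\nu})\in\WO_{\underline{\rho_j}}$ directly from the multisets $\mathcal{S}_j=\bigcup_i\{\nu^{(i)}_k \mid k\in[\Rho_{i,j-1}+1,\Rho_{ij}]\}$, with no reference to whether $w_j$ or any related word lies in $V(\boldsymbol{\nu})$, and then stratifying by $\boldsymbol{v}(\boldsymbol{\nu})=(v_1(\boldsymbol{\nu}),\dots,v_r(\boldsymbol{\nu}))$ rather than by the full flag $V(\boldsymbol{\nu})$. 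This coarser stratification is precisely what makes the inner sums assemble, in a single step, into the generalized Igusa functions. You would also need to spell out the reparametrization of the pairs $(\boldsymbol{\nu},\mu)$ by the free non-negative integer differences $s_{jk}$ and $r_k$, which is how the paper turns ``the residual sums become geometric'' into an actual computation and matches the exponents against the numerical data $x_k$, $y_v^{(j)}$.
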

\begin{proof}
Fix ${\rho} \in \mathcal{M}_{\underline{n}, w}$. For each $j \in [r]$ we define a multiset
\begin{equation*}
\mathcal{S}_j =  \bigcup_{i = 1}^m \left\{ \nu^{(i)}_{k} \mid k \in [\Rho_{i,j-1} + 1, \Rho_{ij}] \right\}.
\end{equation*}
Recall the compositions $\underline{\rho_j}$ defined
in~\eqref{equ:def.kj} above, which depend only on ${\rho}$.  For each
$j \in [r]$, the natural ordering of the elements of $\mathcal{S}_j$
provides a weak order $v_j \in \WO_{\underline{\rho_j}}$.  Again,
these depend only on the projection data $\boldsymbol{\nu}$, so we
denote them $v_j(\boldsymbol{\nu})$ and set
$\boldsymbol{v}(\boldsymbol{\nu}) = (v_1(\boldsymbol{\nu}), \dots,
v_r(\boldsymbol{\nu}))$.  As in the previous section, we define
$w_j = \prod_{i = 1}^m a_i^{\Rho_{ij}} \in C_{\underline{n}}$.

Now fix an $r$-tuple
$(v_1, \dots, v_r) \in \prod_{j = 1}^r \WO_{\underline{\rho_j}}$.  For
every $j \in [r]$, suppose that $v_j$ includes the word
$\prod_{i = 1}^m a_i^{\rho_{ij}}$ (except when $\underline{\rho_1}$ is
the zero composition, in which case $v_1$ is empty).  Write
\begin{equation*}
v_j = \{ v_{j1} < v_{j2} < \cdots < v_{j, \ell_j} \}
\end{equation*}
for some $\ell_j \in \N_0$.  We define
$\widetilde{v}_{jk} = w_{j-1} \cdot v_{jk} \in C_{\underline{n}}$.
Consider the set $S_{\widetilde{v}_{jk}}$ and its minimal element
$m(\widetilde{v}_{jk})$ as in~\eqref{equ:sr.def}.  Note that
$v_{j,\ell_j} = \prod_{i = 1}^m a_i^{\rho_{ij}}$ and that consequently
$m(\widetilde{v}_{j, \ell_j}) = \lambda_{\widetilde{L}_j}$.  Let
$\epsi_j \in \N$ be the minimal positive integer such that
$\ell(\widetilde{v}_{j, \epsi_j}) > \widetilde{L}_{j-1}$.  Then
$m(\widetilde{v}_{j, \epsi_j}) = \lambda_{\widetilde{L}_{j-1} + 1}$.
Observe that $\delta^{(j)}_{v_{jk}} = 0$ in Definition~\ref{dfn:dwk}
if and only if $k < \epsi_j$; in this case, $m(\widetilde{v}_{jk})$ is
equal to a part of $\boldsymbol{\nu}$ that does not appear in the
partition $\lambda(\boldsymbol{\nu})$.

For every element $v_{jk} = \prod_{i = 1}^m a_i^{\gamma_i} \in
C_{\underline{\rho_j}}$, define $$m(v_{jk}) = \min \{ \nu^{(i)}_u \mid
u \in [\Rho_{i,j-1} + 1, \Rho_{i,j-1} + \gamma_i] \}.$$ Note that the
elements of the set $\{ m(v_{jk}) \mid j \in [r], k \in [\ell_j] \}$ are
exactly the parts of the projection data $\boldsymbol{\nu}$.
Moreover, if $\delta^{(j)}_{v_{jk}} = 1$, then $m(v_{jk}) =
m(\widetilde{v}_{jk})$.  Otherwise, it may happen that $m(v_{jk}) >
m(\widetilde{v}_{jk})$, as the set defining $m(v_{jk})$ consists
entirely of parts of $\boldsymbol{\nu}$ that do not appear in
$\lambda(\boldsymbol{\nu})$ and may all be larger than the minimal
element of the disjoint set $S_{w_{j-1}}$.

We now
define a collection of differences that will provide a convenient
parametrization of the pairs $(\boldsymbol{\nu}, {\mu})$ that we are
considering:
\begin{align*}
  s_{jk} & =  \begin{cases}
    m({v}_{jk}) - m({v}_{j,k+1}), &\textup{for } k < \ell_j, \\
    m({v}_{jk}) - \mu_{M_{j-1} + 1}, &\textup{for } k = \ell_j,
\end{cases} 
  \\
  r_k & =  \begin{cases}
    \mu_k - m({v}_{j+1, \epsi_{j+1}}), &\textup{ for }k \in \{ M_1, \dots, M_{r-1} \}, \\
    \mu_k, &\textup{ for } k = M_r, \\
    \mu_k - \mu_{k+1}, &\textup { otherwise.}
\end{cases}
\end{align*}
Here the indices of the $r_k$ run over the set $[M_r] = [c]$, whereas
the indices of the $s_{jk}$ satisfy $j \in [r]$ and $k \in [\ell_j]$.
We emphasize that the $r_k$ have no connection with the parameter $r$
defined earlier.  Observe that the $s_{jk}$ and the $r_k$ are all
non-negative integers.  Moreover, if we allow all the $s_{jk}$ to run
over $\N_0$ and all the $r_k$ to run over $\N$ if $k \in \{ M_1,
\dots, M_{r-1} \}$ and over $\N_0$ otherwise, then we obtain
precisely the pairs $(\boldsymbol{\nu}, {\mu})$ satisfying the following
three conditions:
\begin{enumerate}
\item $w({\lambda}(\boldsymbol{\nu}), {\mu}) = w$
\item ${\rho}(\boldsymbol{\nu}) = {\rho}$
\item $\boldsymbol{v}(\boldsymbol{\nu}) = (v_1, \dots, v_r)$.
\end{enumerate}

Let $\Delta_{w,{\rho}, \boldsymbol{v}}(q,t)$ be the function defined
by the right-hand side of~\eqref{equ:deltaw.definition}, except that
the sum runs only over the data $\boldsymbol{\nu}$ satisfying
$\boldsymbol{v}(\boldsymbol{\nu}) = (v_1, \dots, v_r)$.  Our task is now
to rewrite the ingredients of $\Delta_{w,{\rho},
  \boldsymbol{v}}(q,t)$, and hence the function itself, in terms of
the parameters $s_{jk}$ and $r_k$.
Consider
the following collection of intervals:
\begin{align} \label{align:intervals}
& [\mu_k - r_k + 1, \mu_k],  && k \in [c], \\ \nonumber
& [m({v}_{jk}) - s_{jk} + 1, m({v}_{jk})], & & j \in [2,r], \, k \in [\epsi_j, \ell_j].
\end{align}
The reader will easily verify that these intervals are disjoint and
that their union is the interval $[1, \mu_1]$.  It follows from this
observation that
\begin{equation} \label{equ:mu.differences}
\mu_k = \sum_{b = k}^c r_b + \sum_{b = j+1}^r \sum_{u = \epsi_b}^{\ell_b} s_{bu}
\end{equation}
if $k \in [M_{j-1} + 1, M_j]$, whereas if $\nu^{(i)}_d = m(v_{jk})$, then
\begin{equation} \label{equ:nu.differences}
\nu^{(i)}_d = \sum_{u = k}^{\ell_j} s_{ju} + \sum_{b = j+1}^r \sum_{u = \epsi_b}^{\ell_b} s_{bu} + \sum_{b = M_{j-1} + 1}^c r_b.
\end{equation}

We now treat the ingredients of
$\Delta_{w, \rho, \boldsymbol{\nu}}(q,t)$, starting with the
$\beta(\nu^{(i)};q_i)$.  Since ${\rho}(\boldsymbol{\nu}) = {\rho}$, it
follows that
$\{ \Rho_{ij} \mid j \in [r-1] \} \subseteq J_{\nu^{(i)}}$ for all
$i \in [m]$.  For every $j \in [r]$ define the set
\begin{equation*}
J_{\nu^{(i)}}^{(j)}  =  \{ k - \Rho_{i,j-1} \mid k \in J_{\nu^{(i)}} \cap (\Rho_{i,j-1}, \Rho_{ij}) \}.
\end{equation*}
Lemma~\ref{lem:binom} implies that
\begin{equation} \label{equ:beta.binomial}
\binom{n_i}{J_{\nu^{(i)}}}_{q_i^{-1}}  =  \binom{n_i}{\Rho_i}_{q_i^{-1}} \prod_{j = 1}^r \binom{\rho_{ij}}{J_{\nu^{(i)}}^{(j)}}_{q_i^{-1}}. 
\end{equation}
Using~\eqref{equ:mu.differences} and~\eqref{equ:nu.differences}, the
differences $\nu^{(i)}_{d} - \nu^{(i)}_{d+1}$ appearing in the
exponents in $\beta({\nu}^{(i)};q_i)$, as defined
in~\eqref{equ:beta.definition}, can be expressed as sums of distinct
parameters $s_{jk}$ and $r_k$.  In particular, we observe that the
elements of $J_{\nu^{(i)}}^{(j)}$ are precisely the exponents of the
variable $a_i$ that occur in the weak order $v_j$.  It then follows
from~\eqref{equ:beta.binomial} that
\begin{equation*}
\prod_{i = 1}^m \binom{n_i}{J_{\nu^{(i)}}}_{q_i^{-1}} = \prod_{i = 1}^m \binom{n_i}{\Rho_i}_{q_i^{-1}}  \prod_{j = 1}^r \binom{\underline{\rho_j}}{v_j}_{\bfY},
\end{equation*}
where $\bfY = (q_1^{-1}, \dots, q_m^{-1})$ and
$\binom{\underline{\rho_j}}{v_j}_{\bfY}$ is as in
Definition~\ref{dfn:chain.binomial}.  This completes our analysis of
the factors $\beta({\nu}^{(i)};q_i)$.

We now consider the factors
$\alpha({\lambda}(\boldsymbol{\nu}), {\mu};q)$, using the idea behind
the proofs of~\cite[Lemmata 2.16 and 2.17]{SV1/15}.  The range of
parameters $k$ over which the infinite product
of~\eqref{def:birkhoff.poly} giving
$\alpha({\lambda}(\boldsymbol{\nu}), {\mu}; q) =
\alpha(\widetilde{\lambda}, \mu; q)$ may have non-trivial factors is
precisely $[1, \mu_1]$.  Recall that
$\widetilde{\lambda}^\prime_k = \lambda_k^\prime + \epsi$ for all $k$
and observe that the dual partitions $\widetilde{\lambda}^\prime$ and
${\mu}^\prime$ are constant on each of the intervals
of~\eqref{align:intervals}.  Indeed, if
$d \in [\mu_k - r_k + 1, \mu_k]$, where $k \in [M_{j-1} + 1, M_j],$
then $\widetilde{\lambda}^\prime_d = L_j$ and $\mu_d^\prime = k$.
Similarly, if $d \in [m({v}_{jk}) - s_{jk} + 1, m({v}_{jk})]$ with
$k \in [\epsi_j, \ell_j]$, then
$\lambda_d^\prime = \ell(\widetilde{v}_{jk})$, hence
$\widetilde{\lambda}_d^\prime = \ell(\widetilde{v}_{jk}) +
\varepsilon$, and $\mu_d^\prime = M_{j-1}$.  By manipulations with
Gaussian binomials analogous to those above we find that
\begin{equation*}
\prod_{k = 1}^\infty \binom{\widetilde{\lambda}_k^\prime - \mu_{k+1}^\prime}{\widetilde{\lambda}_k^\prime - \mu_k^\prime}_{q^{-1}} = \prod_{j = 1}^r \binom{L_j - M_{j-1}}{L_j - M_j}_{q^{-1}} \binom{M_j - M_{j-1}}{I_{\mu}^{(j)}}_{q^{-1}},
\end{equation*}
where $I_\mu^{(j)} = \{ k - M_{j-1} \mid k \in J_\mu \cap (M_{j-1},
M_j) \} \subset [M_j - M_{j-1} - 1]$.  Combining these observations,
we obtain
\begin{multline*}
\alpha(\lambda(\boldsymbol{\nu}), \mu; q) = \\ \prod_{j=1}^r \left( 
\binom{L_j - M_{j-1}}{L_j - M_j}_{q^{-1}} \binom{M_j - M_{j-1}}{I_{\mu}^{(j)}}_{q^{-1}} \prod_{k = M_{j-1} + 1}^{M_j} q^{k(L_j - k)r_k} \prod_{k = \epsi_j}^{\ell_j} q^{M_{j-1}(\ell(\widetilde{v}_{jk}) + \varepsilon - M_{j-1}) s_{jk}} \right).
\end{multline*}

The exponents in the remaining factor
$(q^{n}t)^{\sum_{k = 1}^{c} \mu_k} t^{\sum_{i = 1}^m \sum_{j =
    1}^{n_i} \nu^{(i)}_j}$ of the right-hand side
of~\eqref{equ:deltaw.definition} are again readily expressed as sums
of parameters $r_k$ and $s_{jk}$ using~\eqref{equ:mu.differences}
and~\eqref{equ:nu.differences}.  We leave the final assembly as an
exercise for the reader.  Summing the parameters $r_k$ and $s_{jk}$
over the ranges indicated above, we obtain
\begin{multline*}
\Delta_{w, {\rho}, \boldsymbol{v}}(q,t) = \left(\prod_{i = 1}^m
\binom{n_i}{\Rho_i}_{q_i^{-1}}\right) \prod_{j = 1}^r \left( \binom{L_j -
  M_{j-1}}{L_j - M_j}_{q^{-1}} \binom{\underline{\rho_j}}{v_j}_{\bfY}
\prod_{k = 1}^{\ell_j} \frac{y_{v_{jk}}^{(j)}}{1 - y_{v_{jk}}^{(j)}}
\right) \cdot \\ \prod_{j = 1}^{r-1} I_{M_j - M_{j - 1}}^\circ
(q^{-1}; x_{M_{j-1}+1}, \dots, x_{M_j}) \cdot I_{M_r - M_{r-1}}
(q^{-1}; x_{M_{t-1} + 1}, \dots, x_{M_t}),\label{eq:Delta}
\end{multline*}
where the numerical data $x_k$ and $y_{v_{jk}}^{(j)}$ are as given in
Definition~\ref{dfn:dwk}.  In particular, note that $y_{v_{jk}}^{(j)}$
depends only on the word $\widetilde{v}_{jk}$ and not on the weak
order $v_j$.  Summation over all $r$-tuples $\boldsymbol{v} = (v_1,
\dots, v_r) \in \prod_{j=1}^r \WO_{\underline{\rho_j}}$ now completes
the proof of Lemma~\ref{lem:most.technical.lemma}, and hence of
Proposition~\ref{thm:beef.is.here}.
\end{proof}

\section{Application to the class $\mfL$ -- proof of Theorem~\ref{thm:main.local}}\label{sec:application}
In order to deduce Theorem~\ref{thm:main.local} from the results of
the previous section, namely Theorem~\ref{thm:zeta.explicit} and
Corollary~\ref{cor:funct.eq}, it remains to show that
Hypothesis~\ref{hypothesis} is satisfied for $\mfo$-Lie algebras $L$
as in the statement of Theorem~\ref{thm:main.local}.  We noted in
Remark~\ref{rem:direct.product} that the hypothesis is stable under
direct products.  Hence it suffices to verify the hypothesis in the
case $L = \mcL(\mfO_1)\times \dots \times \mcL(\mfO_g)$, where $\mcL$
is a Lie ring from one of the three defining subclasses in
Definition~\ref{def:L} and $\mfO_i$ is a finite extension of $\lri$,
for each $i\in[g]$. It is enough to compute the $\lri$-ideal zeta
function of $L$; indeed, the $\Lri$-ideal zeta function of $L(\Lri)$
is obtained from the $\lri$-ideal zeta function of $L$ by substituting
$q^f$ for $q$, where $f$ is the inertia degree of $\Lri/\lri$.  This
verification (and more) is done in
Sections~\ref{subsec:free.nilpotent},~\ref{subsec:rel.free.prod},
and~\ref{subsec:higher.heisenberg}.  We recover, \emph{en passant},
the results of previous work by several authors.

\subsection{Abelian Lie rings}
It is instructive to consider the output of
Theorem~\ref{thm:zeta.explicit} for the basic example of the abelian
$\lri$-Lie algebra $L = \mfo^b$.  Its zeta function is well-known;
cf.\ \eqref{equ:abelian}. Let $A \leq L$ be an $\mfo$-sublattice of
rank $c$ with a torsion-free quotient $L/A \simeq \mfo^{n}$; here
$n = b-c$.  Now, let $\pnum \in \N$ and $n_i, e_i, f_i$, for
$i \in [\pnum]$, be natural numbers such that
$\sum_{i = 1}^{\pnum} n_i e_i f_i = n$, and let
$\Lri_1,\dots,\Lri_\pnum$ be arbitrary finite extensions of $\lri$
with ramification indices $e_i$ and inertia degrees $f_i$.  Then we
may express
$L / A \simeq \mfO_1^{n_1} \times \cdots \times
\mfO_{\pnum}^{n_{\pnum}}$ as in~\eqref{equ:abelianization.condition}.
Hypothesis~\ref{hypothesis} is satisfied vacuously, as $c^\prime = 0$.
Moreover, $m = 0$ in the sense of Definition~\ref{dfn:m}.  As
$\epsi = c$, it follows from Remark~\ref{rem:dyck.word.epsilon} that
the only Dyck word $w \in \mathcal{D}_{2c}$ for which
$D_w(q,t) \neq 0$ is the ``trivial'' word $w = \bfz^c \bfo^c$.  Since
the composition $\underline{n}$ is empty, the only
$(\underline{n},w)$-admissible partition is the empty one.  We then
read off from Theorem~\ref{thm:zeta.explicit} that
\begin{equation*}
\zidealo_L(s) = \zeta_{\mfo^{n}}(s)  I_c(q^{-1}; x_1, \dots, x_c),
\end{equation*}
where the
numerical data are given by
$x_k = q^{k(n+c-k)} t^k = q^{k(b-k)} t^k$. 
Indeed, it is immediate from~\eqref{equ:abelian} and~\eqref{equ:on.igusa.identity} that 
$$ I_c(q^{-1} ; x_1, \dots, x_c) = \zeta_{\mfo^c}(s-n) = \prod_{i = n}^{b-1} \frac{1}{1 - q^i t}
= \frac{\zeta_{\mfo^b}(s)}{\zeta_{\mfo^n}(s)}.$$

\subsection{Free class-$2$-nilpotent Lie
  rings} \label{subsec:free.nilpotent} Let $\mff_{2,d}$ denote the
free class-$2$-nilpotent Lie ring on $d$ generators.  If $\mfO$ is a
finite extension of $\mfo$ with ramification index $e$ and inertia
degree $f$, then the derived subalgebra of $\mff_{2,d}(\mfO)$ is
isolated and has $\lri$-rank $\binom{d}{2} ef$ and abelianization of
$\lri$-rank~$def$.  We will now implement the general framework
developed in Section~\ref{sec:main.results} to compute the
$\lri$-ideal zeta function of the direct product
$$L = \mff_{2,d_1}(\mfO_1) \times \cdots \times
\mff_{2,d_m}(\mfO_m),$$ where $d_i \in \N$ and $\mfO_i$ is a finite
extension of $\mfo$ for all $i \in [m]$.  The abelianization of
$\mff_{2,d_i}(\mfO_i)$ is isomorphic to $\mfO_i^{d_i}$ as an
$\mfo$-module.  Thus $L$
satisfies~\eqref{equ:abelianization.condition}, with $A = L^\prime =
Z(L)$ and $n_i = d_i$ for every $i \in [m]$. We set $\ol{L} = L/L'$
and let $\pi_i : \overline{L} \to \mfO_i^{d_i}$ be the projections as
in Section~\ref{subsec:projection.data}.  Let $\Lambda \leq
\overline{L}$ be a finite-index $\mfo$-sublattice and
$\nu(\pi_i(\Lambda))$ be the elementary divisor type of the
$\mfO_i$-sublattice of $\mfO_i^{d_i}$ generated by $\pi_i(\Lambda)$.
To use the method of the previous section, we must compute the
elementary divisor type of the commutator lattice $[\Lambda, L]$.

\begin{lem}\label{lem:free.lambda}
Let $L = \mff_{2,d_1}(\mfO_1) \times \cdots \times
\mff_{2,d_m}(\mfO_m)$ and let $\Lambda \leq \overline{L}$ be an
$\mfo$-sublattice.  For every $i \in [m]$, let $\nu^{(i)} =
\nu(\pi_i(\Lambda)) = (\nu^{(i)}_{1}, \dots, \nu^{(i)}_{d_i})$. Then
the $\lri$-elementary divisor type $\lambda(\Lambda)$ of the
commutator $[\Lambda,L]\leq L'$ is obtained from the following
multiset with $c = \sum_{i = 1}^m \binom{d_i}{2} e_i f_i$
elements:
$$ \coprod_{i = 1}^m \coprod_{1 \leq j < k \leq d_i} \{ \min \{
\nu^{(i)}_{j}, \nu^{(i)}_{k} \} \}_{e_i, f_i}.$$
\end{lem}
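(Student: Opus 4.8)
The plan is to reduce the statement to a single free factor, compute the commutator lattice there by diagonalising the sublattice, and then pass from $\mfO_i$-modules back to $\mfo$-modules via Lemma~\ref{lem:oedt.to.zp}.

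\emph{Step 1: reduction to one factor.} Writing $L_i = \mff_{2,d_i}(\mfO_i)$, I would first record that in the direct product $L = L_1 \times \cdots \times L_m$ one has $L' = L_1' \times \cdots \times L_m'$ and $[L_i, L_j] = 0$ for $i \neq j$, so that $[\Lambda, L] = \bigoplus_{i=1}^m [\pi_i(\Lambda), L_i]$, where $[\pi_i(\Lambda), L_i]$ is formed after choosing any lift of $\pi_i(\Lambda) \subseteq L_i/L_i'$ to $L_i$ (the result being independent of the lift, since brackets kill $L_i'$). The inclusion ``$\subseteq$'' is immediate from bilinearity of the bracket; for ``$\supseteq$'' I would note that every element of $\pi_i(\Lambda)$ is the image of some $\bar\lambda \in \Lambda$, and bracketing $\bar\lambda$ with the $i$-th summand of $L$ realises all of $[\pi_i(\Lambda), L_i]$ inside $[\Lambda, L]$. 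Since the $\mfo$-elementary divisor type of a sublattice $\bigoplus_i N_i \leq \bigoplus_i M_i$ is the multiset union of the types of the $N_i \leq M_i$ (by uniqueness of the invariant-factor decomposition of a finite $\mfo$-module), it suffices to treat a single factor: given $\mcL = \mff_{2,d}(\mfO)$ with $\mfO/\mfo$ of ramification index $e$ and inertia degree $f$, and a finite-index $\mfo$-sublattice $\Lambda \leq \mcL/\mcL' \cong \mfO^d$ with $\nu = (\nu_1, \dots, \nu_d)$ the elementary divisor type of the $\mfO$-sublattice of $\mfO^d$ generated by $\Lambda$ (as in Definition~\ref{dfn:projection.data}), show that the $\mfo$-elementary divisor type of $[\Lambda,\mcL] \leq \mcL'$ is obtained from $\coprod_{1 \le j < k \le d} \{\min\{\nu_j,\nu_k\}\}_{e,f}$.

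\emph{Step 2: the single-factor computation.} Here I would first note that $[\Lambda,\mcL]$ is already an $\mfO$-submodule of $\mcL'$ and is unchanged if $\Lambda$ is replaced by its $\mfO$-span $\Lambda^{\mfO}$, because $[a\lambda, x] = [\lambda, ax]$ for all $a \in \mfO$. As $\mfO$ is a discrete valuation ring, Smith normal form provides an $\mfO$-basis $e_1, \dots, e_d$ of $\mfO^d$ with $\Lambda^{\mfO} = \bigoplus_j \mathfrak{M}^{\nu_j} e_j$, where $\mathfrak{M} = \Pi\mfO$. By the defining property of $\mff_{2,d}$, the elements $[e_j,e_k]$ for $1 \le j < k \le d$ form an $\mfO$-basis of $\mcL'$, and $[\Lambda^{\mfO},\mcL]$ is the $\mfO$-module generated by the $\Pi^{\nu_j}[e_j,e_k]$; using $[e_k,e_j] = -[e_j,e_k]$, the coefficient of $[e_j,e_k]$ (for $j<k$) ranges over $\Pi^{\nu_j}\mfO + \Pi^{\nu_k}\mfO = \mathfrak{M}^{\min\{\nu_j,\nu_k\}}$. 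Hence
$$[\Lambda,\mcL] = \bigoplus_{1 \le j < k \le d} \mathfrak{M}^{\min\{\nu_j,\nu_k\}}[e_j,e_k], \qquad \mcL'/[\Lambda,\mcL] \cong \bigoplus_{1 \le j < k \le d} \mfO/\mathfrak{M}^{\min\{\nu_j,\nu_k\}}$$
as $\mfO$-modules, so the $\mfO$-elementary divisor type of $[\Lambda,\mcL]$ in $\mcL'$ is $(\min\{\nu_j,\nu_k\})_{1\le j<k\le d}$.

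\emph{Step 3: descent to $\mfo$.} Finally I would apply Lemma~\ref{lem:oedt.to.zp} to each summand $\mfO/\mathfrak{M}^\tau$: its $\mfo$-elementary divisor type is the multiset $\{\tau\}_{e,f}$ of Definition~\ref{dfn:dictionary}. Taking the multiset union over $1 \le j < k \le d$, and then reassembling over the $m$ factors via Step~1, yields precisely $\lambda(\Lambda) = \coprod_{i=1}^m \coprod_{1 \le j < k \le d_i}\{\min\{\nu^{(i)}_j,\nu^{(i)}_k\}\}_{e_i,f_i}$; counting elements confirms that this partition has $c = \sum_{i=1}^m \binom{d_i}{2} e_i f_i$ parts. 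I expect the only genuinely delicate point to be the $\mfo$-versus-$\mfO$ bookkeeping in Steps~1 and~2 — in particular the observation that $[\Lambda,\mcL]$ depends only on the $\mfO$-span of (the projections of) $\Lambda$, which is what makes the diagonalisation legitimate; everything else is routine linear algebra over a discrete valuation ring together with the standard structure of $\mff_{2,d}$ and its base change.
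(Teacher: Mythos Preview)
Your proof is correct and follows essentially the same approach as the paper: reduce to a single factor via the direct product decomposition $[\Lambda,L] = \bigoplus_i [\pi_i(\Lambda),\pi_i(L)]$, diagonalise the $\mfO_i$-span of the projection to read off the $\mfO_i$-elementary divisors as pairwise minima, and then descend to $\mfo$ via Lemma~\ref{lem:oedt.to.zp}. You are in fact slightly more careful than the paper in making explicit the point that $[\Lambda,\mcL]$ depends only on the $\mfO$-span of the projection (your identity $[a\lambda,x]=[\lambda,ax]$), which the paper leaves implicit in its ``clearly''.
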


\begin{proof}
  Let $(x^{(i)}_1, \dots, x^{(i)}_{d_i})$ be an $\mfO_i$-basis of
  $\mff_{2,d_i}(\mfO_i)$ with respect to which $\pi_i(\Lambda)$ is
  diagonal:
$$ \pi_i(\Lambda) = \langle \Pi_i^{\nu^{(i)}_1} x^{(i)}_1, \dots, \Pi_i^{\nu^{(i)}_{d_i}} x^{(i)}_{d_i} \rangle_{\mfO_i},$$
where $\Pi_i \in \mfO_i$ is a uniformizer.  Observe that the
collection of commutators
\begin{equation*}
\left\{ [x^{(i)}_j, x^{(i)}_k] \right\}_{1 \leq j < k \leq d_i}
\end{equation*}
provides an $\Lri_i$-basis of the derived subalgebra of
$\mff_{2,d_i}(\mfO_i)$.  Clearly, the commutator subalgebra
$[\pi_i(\Lambda), \pi_i(L)]$ is the $\mfO_i$-lattice spanned by the
elements $ \{ \Pi_i^{\nu^{(i)}_j} [x_j^{(i)}, x_k^{(i)}] \}_{j \neq
  k}$.  The $\mfO_i$-elementary divisor type of this lattice is the
partition with parts $\min \{ \nu^{(i)}_j, \nu^{(i)}_k \}$, as
observed already just before~\cite[Lemma~5.2]{GSS/88}.  The elementary
divisor type of the same object, viewed as a lattice over $\mfo$, is
given by the multiset
$$ \coprod_{1 \leq j < k \leq d_i} \{ \min \{ \nu^{(i)}_{j},
\nu^{(i)}_{k} \} \}_{e_i, f_i} $$ by Lemma~\ref{lem:oedt.to.zp}.  To
complete the proof, we observe that the direct product structure of
$L$ implies that $[\Lambda, L] = \bigoplus_{i = 1}^m [\pi_i(\Lambda),
  \pi_i(L)]$.
\end{proof}

\begin{rem} \label{rem:f2d.unramified}
Observe that $\{ \nu \}_{1,f}$ is simply the multiset consisting of
the element $\nu$ with multiplicity $f$.  Therefore, if the extensions
$\mfO_i / \mfo$ are all unramified (i.e.~$e_i = 1$ for all $i$) then
it is immediate from Lemma~\ref{lem:free.lambda} that $L$ satisfies
Hypothesis~\ref{hypothesis}.  Indeed, we may set $\klim = \sum_{i =
  1}^m f_i$ and let the collection $\widetilde{\mathfrak{S}}_1, \dots,
\widetilde{\mathfrak{S}}_{\klim}$ consist of $f_i$ copies of the pair
$(\{i\}, 2)$ for every $i \in [m]$.  Moreover, our decomposition of
$L/A$ satisfies the conditions of Remark~\ref{rem:ram}.  Therefore,
Hypothesis~\ref{hypothesis} necessarily fails if any of the extensions
$\mfO_i / \mfo$ are ramified, and the method of
Section~\ref{sec:main.results} is inapplicable.  \emph{We therefore
  assume for the remainder of Section~\ref{subsec:free.nilpotent} that
  all the $\mfO_i$ are unramified over $\mfo$.}
\end{rem}

As at the beginning of Section~\ref{sec:statement}, the possible
orderings of the projection data $\bsnu = (\nu^{(1)}, \dots,
\nu^{(m)})$ are parametrized by the the chain complex
$\WO_{\underline{n}}$ of $C_{\underline{n}}$.  Recall the function
$\ell(v)$ of Definition~\ref{dfn:rad}.

\begin{lem} \label{lem:f2d.length}
Let $v = \prod_{i = 1}^m a_i^{\alpha_i} \in C_{\underline{n}}$.  Then
$\ell(v) = \sum_{i = 1}^m \binom{\alpha_i}{2} f_i$.
\end{lem}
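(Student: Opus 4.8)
The plan is to deduce this directly from Lemma~\ref{lem:length.function} together with the explicit combinatorial data identified in Remark~\ref{rem:f2d.unramified}. Recall that in that remark, under the standing assumption that all $\mfO_i/\mfo$ are unramified, Hypothesis~\ref{hypothesis} was verified for $L = \mff_{2,d_1}(\mfO_1) \times \cdots \times \mff_{2,d_m}(\mfO_m)$ by taking $\klim = \sum_{i=1}^m f_i$ and letting the pairs $\widetilde{\mathfrak{S}}_1, \dots, \widetilde{\mathfrak{S}}_{\klim}$ consist of $f_i$ copies of $(\{i\}, 2)$ for each $i \in [m]$ --- this being exactly the data that reproduces, via Definition~\ref{dfn:lambda.nu}, the multiset description of $\lambda(\Lambda)$ furnished by Lemma~\ref{lem:free.lambda} (with Lemma~\ref{lem:oedt.to.zp} specialized to $e_i = 1$). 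In particular, each such pair has $\tau_k = 1$, singleton $\mathfrak{S}_k = \{i\}$, and $\sigma_{k1} = 2$; moreover $\mathfrak{S} = \bigcup_k \mathfrak{S}_k = [m]$, so the parameter $m$ of Definition~\ref{dfn:m} coincides with the number of direct factors and no renumbering of components is required.

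First I would substitute this data into the formula of Lemma~\ref{lem:length.function},
$$\ell(v) = \sum_{k=1}^{\klim} \prod_{j=1}^{\tau_k} \binom{\alpha_{s_{kj}}}{\sigma_{kj}}.$$
For a pair $\widetilde{\mathfrak{S}}_k = (\{i\}, 2)$ the inner product collapses to the single binomial coefficient $\binom{\alpha_i}{2}$. Hence the right-hand side is a sum of $\klim$ terms, each of the form $\binom{\alpha_i}{2}$ for some $i \in [m]$. Grouping these $\klim$ summands according to the index $i$ they are attached to --- there being precisely $f_i$ of them for each $i \in [m]$ --- yields $\ell(v) = \sum_{i=1}^m f_i \binom{\alpha_i}{2}$, as claimed.

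I do not anticipate any genuine obstacle: once Hypothesis~\ref{hypothesis} has been established for $L$ (which is the content of Remark~\ref{rem:f2d.unramified}), the lemma is a purely bookkeeping consequence of the general length formula. The only point that requires a moment's care is matching the indexing conventions --- confirming that the $m$ of $C_{\underline n}$ agrees with the number of factors of $L$ and that every component genuinely participates in the minima defining $\lambda(\boldsymbol{\nu})$ --- but this is immediate from the choice of data above.
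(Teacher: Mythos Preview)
Your proposal is correct and coincides with the second of the two arguments the paper gives: after the direct combinatorial count, the paper writes ``Alternatively, apply Lemma~\ref{lem:length.function} and the description of the sets $\widetilde{\mathfrak{S}}_1, \dots, \widetilde{\mathfrak{S}}_{\klim}$ given in Remark~\ref{rem:f2d.unramified} above,'' which is exactly what you have done. The paper's primary proof is a short direct count---for each $i$ there are $\alpha_i$ parts of $\nu^{(i)}$ at least $m(v)$, hence $\binom{\alpha_i}{2}$ pairwise minima at least $m(v)$, each contributing with multiplicity $f_i$---but your route via the general length formula is equally valid and equally short.
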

\begin{proof}
Let $i \in [m]$.  There are exactly $\alpha_i$ parts of the partition
$\nu(\pi_i(\Lambda))$ that are not less than~$m(v)$, and hence there
are $\binom{\alpha_i}{2}$ pairwise minima that are not less than
$m(v)$.  Each of these minima appears in ${\lambda}(\Lambda)$ with
multiplicity $f_i$.  Alternatively, apply
Lemma~\ref{lem:length.function} and the description of the
sets $\widetilde{\mathfrak{S}}_1, \dots,
\widetilde{\mathfrak{S}}_{\klim}$ given in Remark~\ref{rem:f2d.unramified} above.
\end{proof}

We now have all the ingredients necessary to apply
Definition~\ref{dfn:dwk} and Theorem~\ref{thm:zeta.explicit} to obtain
an explicit expression for $\zeta_L^{\vartriangleleft\,\lri} (s)$.

\begin{exm}
  We recover an expression for the $\Zp$-ideal zeta function of
  $\mff_{2,d}(\Z_p)$, where $d \geq 2$, which was computed by the
  third author in~\cite{Voll/05a}.  The expressions of
  Theorem~\ref{thm:zeta.explicit} reduce to a particularly simple form
  in this case.  Here $m = 1$ and $\mfo = \Z_p$, and, given a
  $\Z_p$-sublattice $\Lambda \leq \overline{L}$, there is only one
  relevant projection datum, namely the elementary divisor type
  ${\nu} = (\nu_1, \dots, \nu_d)$ of $\Lambda$ itself.  The derived
  subalgebra has rank $c = \binom{d}{2}$.  In view of
  Lemma~\ref{lem:f2d.length}, the parts of the dual partition
  $\lambda(\Lambda)^\prime = {\lambda}({\nu})^\prime$ are all
  triangular numbers.  In particular, if
  $w \in \mathcal{D}_{2c} = \mathcal{D}_{d(d-1)}$ is a Dyck word, then
  $D_{w}(p,t) = 0$ unless all the parameters $L_1, \dots, L_r$
  associated to $w$ are triangular numbers.

So suppose that $w \in \mathcal{D}_{d(d-1)}$ is such that $L_j =
\binom{\gamma_j}{2}$ for all $j \in [r]$.  It is easy to see from
Definition~\ref{dfn:adm.comp} that there is only one
$(d,w)$-admissible composition, namely $\rho_{1j} = \gamma_j -
\gamma_{j-1}$ for all $j \in [r]$ (where we have set $\gamma_0 = 0$).
Thus $\Rho_{1j} = \gamma_j$ for all $j$.  Noting from
Example~\ref{exm:gen.igusa} that the generalized Igusa function
associated to a composition with one part is a classical Igusa
function in the sense of Definition~\ref{def:igusa}, we read off from
Definition~\ref{dfn:dwk} that
\begin{multline*}
D_w (p,t) = \prod_{j = 1}^r \left( \binom{L_j - M_{j-1}}{L_j -
  M_j}_{p^{-1}} \binom{d}{\gamma_j}_{p^{-1}} I_{\gamma_j -
  \gamma_{j-1}} (p^{-1}; y_1^{(j)}, \dots, y_{\gamma_j -
  \gamma_{j-1}}^{(j)}) \right) \cdot
\\ \prod_{j=1}^{r-1}I_{M_j-M_{j-1}}^\circ (p^{-1};x_{M_{j-1}+1},
\dots, x_{M_j}) \cdot I_{M_r-M_{r-1}}(p^{-1};x_{M_{r-1}+1}, \dots,
x_{M_r}),
\end{multline*}
where
\begin{align*}
y_k^{(j)} & = p^{M_{j-1}(d + \binom{\gamma_{j-1} + k}{2} - M_{j-1}) +
  (\gamma_{j-1} + k)(d - \gamma_{j-1} - k)} t^{\gamma_{j-1} + k +
  M_{j-1}}, \\ x_k & = p^{k(d + \binom{\gamma_j}{2} - k) + \gamma_j(d -
  \gamma_j)} t^{k + \gamma_j}.
\end{align*}
Here, as usual, we have $k \in [M_{j-1} + 1, M_j]$ in the definition
of $x_k$.  Indeed, observe that the only instance of two distinct
subwords $v_1, v_2 \leq a_1^{d}$ satisfying $\ell(v_1) = \ell(v_2)$ is
$\ell(\varnothing) = \ell(a_1) = 0$.  Thus we always have
$\delta_v^{(j)} = 1$ except in the case $\delta_{a_1}^{(1)} = 0$, but
it is easy to verify that the uniform expressions given above for the
numerical data hold.  Finally, by 
Theorem~\ref{thm:zeta.explicit}, 
$$ \zeta^\vartriangleleft_{\mff_{2,d}(\Z_p)} (s) = \sum_{w \in
  \mathcal{D}_{d(d-1)}} D_{w} (p,t).$$ We leave it as an exercise for
the reader to unwind the definitions of~\cite{Voll/05a} and verify
that this formula matches~\cite[Theorem~4]{Voll/05a}.
\end{exm}

\subsection{Free class-$2$-nilpotent products of abelian Lie
  rings} \label{subsec:rel.free.prod} Let $L_1$ and $L_2$ be abelian Lie rings of ranks
$d$ and $d^\prime$, respectively.  We denote by
$\mathfrak{g}_{d,d^\prime}$ the free class-$2$-nilpotent product of
$L_1$ and $L_2$ of nilpotency class at most two.  This is the Lie ring
version of a group-theoretical construction considered by
Levi~\cite{Levi/44} (see also~\cite{Golovin/50}), which is itself a
special case of a varietal product as
in~\cite[Section~1.8]{Neumann/67}.  Concretely, a presentation of
$\mathfrak{g}_{d,d'}$ is given by
$$\mathfrak{g}_{d, d^\prime} = \la x_1,\dots,x_d, y_1,\dots,y_{d'},
\left(z_{ij}\right)_{i\in[d], j\in[d^\prime]} \mid [x_i,y_j]=z_{ij}\ra ,$$
where all Lie brackets not following from the relations above vanish.
\begin{exm}
\ \begin{enumerate}
\item $\mfg_{1,1}$ is the Heisenberg Lie ring.
\item $\mfg_{d,1}$ is the Grenham Lie ring of degree $d$.
\item $\mfg_{d,0} = \Z^d$ is the abelian Lie ring of rank $d$.
\item $\mfg_{d,d} = \mathcal{G}_d$ is the Lie ring featuring in
  \cite[Definition~1.2]{StasinskiVoll/14}.
\end{enumerate}
\end{exm}

We fix $g \in \N$ and $g$-tuples $\underline{d} = (d_1, \dots, d_g)$
and $\underline{d}^\prime = (d^\prime_1, \dots, d^\prime_g)$ of
natural numbers.  Let $\mfO_1, \dots, \mfO_g$ be finite extensions of
$\mfo$ with ramification indices $e_i$ and inertia degrees $f_i$,
respectively.  Consider the $\lri$-Lie algebra
\begin{equation*}
L = \g_{d_1, d^\prime_1}(\mfO_1) \times \cdots \times \g_{d_g, d^\prime_g} (\mfO_g).
\end{equation*}

Define $d = \sum_{i = 1}^g d_i e_i f_i$ and $d^\prime = \sum_{i = 1}^g
d^\prime_i e_i f_i$, and set $c = \sum_{i = 1}^g d_i d^\prime_i e_i
f_i$.  Observe that, as an $\lri$-module, $L$ is free of rank $d +
d^\prime + c$.  Let $L^\prime$ denote the derived subalgebra of $L$,
and let
$$\overline{L} = L / L^\prime \simeq (\Lri_1^{d_1}\times
\Lri_1^{d^\prime_1})\times (\Lri_2^{d_2}\times \Lri_2^{d^\prime_2})\times \dots
\times (\Lri_g^{d_g}\times \Lri_g^{d^\prime_g})$$
be its abelianization.  For
each $i \in [g]$, consider the usual basis
$ \left\{ x_k^{(i)}, y_\ell^{(i)}, z_{k \ell}^{(i)} \right\}_{k \in
  [d_i] \atop \ell \in [d^\prime_i]}$ of $\g_{d_i, d^\prime_i}(\Lri_i)$ as an
$\Lri_i$-module.  Consider the natural linear projections
\begin{align*}
\pi_i : \overline{L} & \to \langle x_1^{(i)}, \dots, x_{d_i}^{(i)}
\rangle_{\Lri_i} \simeq \Lri_i^{d_i} \\ \pi^\prime_{i} : \overline{L} & \to
\langle y_1^{(i)}, \dots, y_{d^\prime_i}^{(i)} \rangle_{\Lri_i} \simeq
\Lri_i^{d^\prime_i}.
\end{align*}

For each $i \in [g]$, fix an $\lri$-basis $(\alpha_1^{(i)}, \dots,
\alpha_{e_i f_i}^{(i)})$ of $\Lri_i$.  Then $ \left\{ \alpha_j^{(i)}
x_k^{(i)}, \alpha_j^{(i)} y_\ell^{(i)}, \alpha_j^{(i)} z_{k
  \ell}^{(i)} \right\}_{k \in [d_i], \ell \in [d^\prime_i] \atop j \in [e_i
    f_i]}$ is an $\lri$-basis of $\g_{d_i, d^\prime_i}(\Lri_i)$ and the union
of these bases is an $\lri$-basis of $L$.

Let $\Lambda \leq \overline{L}$ be an $\lri$-sublattice.  For each $i
\in [g]$, we let $\nu^{(i)}$, a partition with $d_i$ parts, be the
elementary divisor type of the $\Lri_i$-sublattice of $\Lri_i^{d_i}$
generated by $\pi_i(\Lambda)$.  Similarly, we set $\nu^{(i+g)}$ to be
the elementary divisor type of the $\Lri_i$-sublattice of
$\Lri_i^{d^\prime_i}$ generated by $\pi^\prime_{i}(\Lambda)$. In other words,
\begin{equation}\label{equ:nu.grenham}
  \bsnu = \bsnu(\Lambda) =
  (\nu^{(1)},\nu^{(1+g)},\nu^{(2)},\nu^{(2+g)},\dots,\nu^{(g)},\nu^{(2g)})
  \end{equation}
  is the projection data of $\Lambda$ as an $\lri$-sublattice of
  $\overline{L}$.

  \begin{lem} \label{lem:commutator.edt} Let
    $L = \g_{d_1, d^\prime_1}(\mfO_1) \times \cdots \times \g_{d_g,
      d^\prime_g} (\mfO_g)$ and let $\Lambda \leq \overline{L}$ be an
    $\lri$-sublattice. Let $\bsnu(\Lambda)$ be as in
    \eqref{equ:nu.grenham} above. Then the $\lri$-elementary divisor
    type $\lambda(\Lambda)$ of the commutator $[\Lambda,L] \leq L'$ is
    obtained from the following multiset with $c = \sum_{i=1}^g d_i
    d_i' e_if_i$ elements:
 $$ \coprod_{i = 1}^g \coprod_{k = 1}^{d_i d^\prime_i} \{ ({\nu}^{(i)} \ast
 {\nu}^{({i+g})})_k \}_{e_i, f_i},$$ where the operation $\ast$ is
 explained in Definition~\ref{def:partition.ast} and the sets $\{ a \}_{e_i,
   f_i}$, for $a\in\N$, are as in Definition~\ref{dfn:dictionary}.
\end{lem}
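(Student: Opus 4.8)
The plan is to follow the proof of Lemma~\ref{lem:free.lambda}, exploiting that the Lie bracket on each relatively free factor $\g_{d_i,d_i'}$ is, up to the canonical identification of the $i$-th block of $L'$ with $W_i \otimes_{\mfO_i} W_i'$ (where $W_i = \langle x_1^{(i)},\dots,x_{d_i}^{(i)}\rangle_{\mfO_i}$ and $W_i' = \langle y_1^{(i)},\dots,y_{d_i'}^{(i)}\rangle_{\mfO_i}$), simply the tensor pairing of the two generating modules. First I would reduce to a single factor: since the bracket on a direct product of Lie algebras annihilates cross terms, $[\Lambda, L] = \bigoplus_{i=1}^g [\Lambda^{(i)}, \g_{d_i,d_i'}(\mfO_i)]$, where $\Lambda^{(i)}$ is the image of $\Lambda$ under the projection $\overline{L} \to \overline{\g_{d_i,d_i'}(\mfO_i)}$. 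It then suffices to compute, for each $i$, the $\mfo$-elementary divisor type of the $i$-th commutator block and to form the disjoint union of the resulting multisets.

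Fix $i$. The second step is the observation that $[\Lambda^{(i)}, \g_{d_i,d_i'}(\mfO_i)] = [M_i, W_i'] + [W_i, N_i]$, where $M_i$ and $N_i$ are the $\mfO_i$-lattices generated by $\pi_i(\Lambda)$ and $\pi_i^\prime(\Lambda)$: writing an element of $\overline{\g_{d_i,d_i'}(\mfO_i)} = W_i \oplus W_i'$ as $v_x + v_y$ and using $[x,x'] = [y,y'] = 0$ together with $\mfO_i$-bilinearity of the bracket, one sees that the commutator depends only on the two projection lattices. In particular, as in Lemma~\ref{lem:free.lambda}, only the projection data enter. The third step is to diagonalise: by the elementary divisor theorem over the discrete valuation ring $\mfO_i$ one may choose the $\mfO_i$-bases of $W_i$ and of $W_i'$ --- \emph{independently} --- so that $M_i = \langle \Pi_i^{\nu^{(i)}_k} x_k^{(i)} \rangle_{k \in [d_i]}$ and $N_i = \langle \Pi_i^{\nu^{(i+g)}_\ell} y_\ell^{(i)} \rangle_{\ell \in [d_i']}$ for a uniformizer $\Pi_i \in \mfO_i$. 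Then the elements $z_{k\ell}^{(i)} = [x_k^{(i)}, y_\ell^{(i)}]$ still form an $\mfO_i$-basis of the $i$-th block of $L'$, and $[M_i, W_i'] + [W_i, N_i]$ is the $\mfO_i$-span of $\{\Pi_i^{\nu_k^{(i)}} z_{k\ell}^{(i)}\} \cup \{\Pi_i^{\nu_\ell^{(i+g)}} z_{k\ell}^{(i)}\}$, whose $\mfO_i$-elementary divisor type is the partition with parts $\min\{\nu_k^{(i)}, \nu_\ell^{(i+g)}\}$, $k \in [d_i]$, $\ell \in [d_i']$ --- precisely $\nu^{(i)} \ast \nu^{(i+g)}$ of Definition~\ref{def:partition.ast}, as already observed before \cite[Lemma~5.2]{GSS/88}.

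Finally I would descend from $\mfO_i$ to $\mfo$: by Lemma~\ref{lem:oedt.to.zp}, the cyclic $\mfO_i$-module $\mfO_i / \Pi_i^a \mfO_i$ decomposes, as an $\mfo$-module, according to the recipe $\{a\}_{e_i, f_i}$ of Definition~\ref{dfn:dictionary}, so the $\mfo$-elementary divisor type of the $i$-th commutator block is $\coprod_{k=1}^{d_i d_i'} \{(\nu^{(i)} \ast \nu^{(i+g)})_k\}_{e_i, f_i}$. Taking the disjoint union over $i \in [g]$ gives the claimed multiset, which indeed has $\sum_{i=1}^g d_i d_i' e_i f_i = c$ elements. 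I do not anticipate a serious obstacle, the argument being a direct generalisation of Lemma~\ref{lem:free.lambda}; the only point requiring care is the legitimacy of diagonalising $M_i$ and $N_i$ in unrelated bases of $W_i$ and $W_i'$, which does no harm precisely because the bracket identifies the $i$-th block of $L'$ with $W_i \otimes_{\mfO_i} W_i'$, so that any pair of base changes on $W_i$ and $W_i'$ induces a compatible base change on it.
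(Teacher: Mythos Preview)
Your proposal is correct and follows essentially the same approach as the paper's proof: reduce to a single factor via the direct product structure, diagonalise the two projection lattices $M_i$ and $N_i$ independently over $\mfO_i$, read off the $\mfO_i$-elementary divisor type of the commutator block as the partition $\nu^{(i)} \ast \nu^{(i+g)}$, and then descend to $\mfo$ via Lemma~\ref{lem:oedt.to.zp}. Your explicit justification of the independent base changes via the identification of the $i$-th block of $L'$ with $W_i \otimes_{\mfO_i} W_i'$ is exactly the point the paper handles by noting that the commutators $[\xi_k^{(i)},\upsilon_\ell^{(i)}]$ form an $\mfO_i$-basis.
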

\begin{proof}
  For every $i\in[g]$, let $\Pi_i$ denote a uniformizer of
  $\Lri_i$. Let $(\xi_1^{(i)}, \dots, \xi_{d_i}^{(i)})$ and
  $(\upsilon_1^{(i)}, \dots, \upsilon_{d_i^\prime}^{(i)})$ be bases of
  $\Lri_i^{d_i}$ and $\Lri_i^{d_i^\prime}$, respectively, such that
\begin{eqnarray*}
\langle \pi_i(\Lambda) \rangle_{\Lri_i} & = & \langle \Pi_i^{\nu^{(i)}_1} \xi_1^{(i)}, \dots, \Pi_i^{\nu^{(i)}_{d_i}} \xi_{d_i}^{(i)} \rangle_{\Lri_i} \\  
\langle \pi^\prime_i(\Lambda) \rangle_{\Lri_i} & = & \langle \Pi_i^{\nu^{(i+g)}_1} \upsilon_1^{(i)}, \dots, \Pi_i^{\nu^{(i+g)}_{d^\prime_i}} \upsilon_{d^\prime_i}^{(i)} \rangle_{\Lri_i}.
\end{eqnarray*}
Observe that the commutators $[\xi^{(i)}_k, \upsilon_\ell^{(i)}]$ form
an $\Lri_i$-basis of the subspace
$\langle z_{k \ell}^{(i)} \rangle_{\Lri_i}$ of $L^\prime$.  Fixing
$k \in [d_i]$, we find that
\begin{equation*}
[\Pi_i^{\nu_k^{(i)}} \xi_k^{(i)}, \overline{L}]  =  \bigoplus_{\ell \in [d_i^\prime]} \Pi_i^{\nu_k^{(i)}} \Lri_i [\xi_k^{(i)}, y^{(i)}_\ell] = \bigoplus_{\ell \in [d_i^\prime]} \Pi_i^{\nu_k^{(i)}} \Lri_i [\xi_k^{(i)}, \upsilon^{(i)}_\ell].
\end{equation*}
Similarly, for a fixed $\ell \in [d_i^\prime]$ we obtain
\begin{equation*}
[\Pi_i^{\nu_{\ell}^{(i+g)}} \upsilon_\ell^{(i)}, \overline{L}]  =  \bigoplus_{k \in [d_i]} \Pi_i^{\nu_{\ell}^{(i+g)}} \Lri_i [x_k^{(i)}, \upsilon^{(i)}_\ell] = \bigoplus_{k \in [d_i]} \Pi_i^{\nu_{\ell}^{(i+g)}} \Lri_i [\xi_k^{(i)}, \upsilon^{(i)}_\ell].
\end{equation*}
From this we conclude that
\begin{equation*}
[\ol{\mfg_{d_i,d^\prime_i}(\Lri_i)},\Lambda] = \bigoplus_{k\in[d_i], \ell \in[d^\prime_i]} \Pi_i^{\min\{\nu^{(i)}_{k},\nu^{(i+g)}_{\ell}\}} \Lri_i [\xi_k^{(i)}, \upsilon^{(i)}_\ell] = \bigoplus_{k\in[d_i], \ell \in[d^\prime_i]} \Pi_i^{\min\{\nu^{(i)}_{k},\nu^{(i+g)}_{\ell}\}} \Lri_i z_{k \ell}^{(i)}
\end{equation*}
as $\Lri_i$-modules, where $\ol{\mfg_{d_i,d^\prime_i}(\Lri_i)}$ is the
abelianization of $\mfg_{d_i,d^\prime_i}(\Lri_i)$.  Therefore,
\begin{equation*}
[\ol{L}, \Lambda] = \bigoplus_{i\in [g], k\in[d_i], \ell \in[d^\prime_i]}
  \Pi_i^{\min\{\nu^{(i)}_{k},\nu^{(i+g)}_{\ell}\}} \Lri_i z_{k \ell}^{(i)}
\end{equation*}
as $\lri$-modules. The claim follows.
\end{proof}

Set $m = 2g$.  For $i \in [g]$, set $\Lri_{i+g} = \Lri_i$ and define
$n_i = d_i$ and $n_{i+g} = d^\prime_i$.  It is clear from
Lemma~\ref{lem:commutator.edt} that the Lie ring $L$ fits the general
framework of the beginning of Section~\ref{subsec:rewriting}.
Moreover, we see analogously to Remark~\ref{rem:f2d.unramified} that
if all the $\Lri_i$ are unramified over $\lri$, then
Hypothesis~\ref{hypothesis} is satisfied.  In this case, we take
$\klim = \sum_{i = 1}^g f_i$; the collection
$\widetilde{\mathfrak{S}}_1, \dots, \widetilde{\mathfrak{S}}_{\klim}$
consists of $f_i$ copies of the pair $( ( i, i+g ), (1,1))$ for every
$i \in [g]$. \emph{Thus we assume for the remainder of this section
  that {{all the $\Lri_i$ are unramified} over $\lri$}.}

Consider the composition $\underline{n} = (n_1, \dots, n_{2g})$.  Then
the natural ordering among all the parts of the projection data
$\boldsymbol{\nu} = ({\nu}^{(1)}, \dots, {\nu}^{(2g)})$ corresponds to
an element of $\WO_{\underline{n}}$.

\begin{lem} \label{lem:lambda.dual}
 Let $v = \prod_{i = 1}^{2g} a_i^{\alpha_i} \in C_{\underline{n}}$.
 Then $\ell(v) = \sum_{i = 1}^g \alpha_i \alpha_{i+g} f_i$.
\end{lem}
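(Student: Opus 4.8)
The plan is to deduce this directly from Lemma~\ref{lem:length.function}, exactly as in the proof of Lemma~\ref{lem:f2d.length}, using the explicit description of the pairs $\widetilde{\mathfrak{S}}_1, \dots, \widetilde{\mathfrak{S}}_{\klim}$ recorded in the paragraph preceding the statement. Recall that in the present (unramified) situation we have taken $\klim = \sum_{i = 1}^g f_i$, and that the family $\widetilde{\mathfrak{S}}_1, \dots, \widetilde{\mathfrak{S}}_{\klim}$ consists, for each $i \in [g]$, of $f_i$ copies of the pair $\big( (i, i+g), (1,1) \big)$; that is, for such a $k$ we have $\mathfrak{S}_k = \{ i, i+g \}$, $\tau_k = 2$, and $\underline{\sigma}_k = (1,1)$.

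First I would substitute these data into the formula of Lemma~\ref{lem:length.function}. For each index $k$ among the $f_i$ copies attached to a given $i \in [g]$, the corresponding summand is $\binom{\alpha_i}{1}\binom{\alpha_{i+g}}{1} = \alpha_i \alpha_{i+g}$. Summing over all $\klim$ choices of $k$ — that is, grouping the $f_i$ identical contributions for each $i$ — yields
\begin{equation*}
\ell(v) = \lambda(\boldsymbol{\nu})^\prime_{m(v)} = \sum_{i = 1}^g f_i\, \alpha_i \alpha_{i+g},
\end{equation*}
which is the claim.

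Alternatively, and to make the statement transparent, one can argue directly from Lemma~\ref{lem:commutator.edt}: the value $\lambda(\boldsymbol{\nu})^\prime_{m(v)}$ counts the parts of $\lambda(\boldsymbol{\nu})$ that are at least $m(v)$. A part $\min\{ \nu^{(i)}_k, \nu^{(i+g)}_\ell \}$ is $\geq m(v)$ precisely when $\nu^{(i)}_k \geq m(v)$ and $\nu^{(i+g)}_\ell \geq m(v)$; since the word $v = \prod_{i} a_i^{\alpha_i}$ selects exactly the $\alpha_i$ largest parts of $\nu^{(i)}$ (respectively the $\alpha_{i+g}$ largest parts of $\nu^{(i+g)}$), there are $\alpha_i \alpha_{i+g}$ such pairs $(k,\ell)$. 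Each contributes, via $\{ \cdot \}_{e_i, f_i}$ with $e_i = 1$, exactly $f_i$ parts to $\lambda(\boldsymbol{\nu})$, all equal, and summing over $i$ gives the asserted formula.

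I do not anticipate any real obstacle here: the result is a bookkeeping computation, and the only point requiring a word of care is the identification of which parts of $\nu^{(i)}$ and $\nu^{(i+g)}$ lie above $m(v)$ in the ordering encoded by $v \in C_{\underline n}$ — but this is immediate from the definition of $V(\boldsymbol{\nu})$ and $m(v)$ recalled before Definition~\ref{dfn:rad}, together with the hypothesis (valid here) that all parts of $\lambda(\boldsymbol{\nu})$ occur among the parts of $\boldsymbol{\nu}$.
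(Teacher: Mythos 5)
Your proposal is correct and matches the paper's own proof, which gives precisely your two arguments in the opposite order of emphasis: the direct count of pairs $(k,\ell)$ with $\min\{\nu^{(i)}_k,\nu^{(i+g)}_\ell\}\ge m(v)$, each contributing $f_i$ parts in the unramified case, is the paper's main argument, and the substitution into Lemma~\ref{lem:length.function} via the explicit pairs $\widetilde{\mathfrak{S}}_k=(\{i,i+g\},(1,1))$ is mentioned as the alternative.
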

\begin{proof}
Let $v \in C_{\underline{n}}$ as above.  For any $i \in [g]$, the $d_i
d^\prime_i$ parts of ${\nu}^{(i)} \ast {\nu}^{(i+g)}$ are, by
definition, the minima $\min \{ \nu^{(i)}_{k}, \nu^{(i+g)}_\ell \}_{k
  \in [d_i], \ell \in [d^\prime_i]}$.  Clearly, $\min \{
\nu^{(i)}_{k}, \nu^{(i+g)}_\ell \} \geq m(v)$ if and only if both
elements of the pair $(\nu^{(i)}_{k}, \nu^{(i+g)}_\ell)$ are contained
in $S_v$, and it is clear from~\eqref{equ:sr.def} that there are
$\alpha_i \alpha_{i+g}$ such pairs.  Finally, since we have assumed
all $\Lri_i / \lri$ to be unramified, every part of ${\nu}^{(i)} \ast
{\nu}^{(i+g)}$ appears in ${\lambda}(\boldsymbol{\nu})$ with
multiplicity $f_i$.  Alternatively, use
Lemma~\ref{lem:length.function}.
\end{proof}

The $\lri$-ideal zeta function $\zidealo_L(s)$ may now be read off from
Theorem~\ref{thm:zeta.explicit}.  

\subsubsection{Grenham Lie rings over unramified extensions}
As an example, we will treat the case $L = \g_{d,1}(\Lri)$, where
$\g_{d,1}$ is the Grenham Lie ring of degree $d$ and $\Lri / \lri$ is
unramified of degree~$f$.  In the case $d = f = 2$, this zeta function
was computed previously by Bauer, using methods analogous to those
of~\cite{Voll/05} and quite different from the current paper's
approach.

Observe that $L^\prime = Z(L)$, so necessarily we have $A = L^\prime$
and thus $c = c^\prime = df$ and $\varepsilon = 0$ in the notation of
Section~\ref{subsec:rewriting}.  The non-empty radical words $v \in
C_{(d,1)}$ are exactly those of the form $v = a_1^{\alpha_1} a_2$ with
$\alpha_1 > 0$.  If $w \in \mathcal{D}_{2c}$ is a Dyck word with
associated parameters $L_1, \dots, L_r$ and $M_1, \dots, M_r$, then
clearly there are no $((d,1),w)$-admissible compositions (recall
Definition~\ref{dfn:adm.comp}) unless all the $L_i$ are divisible by
$f$.  Otherwise, there is a unique $((d,1),w)$-admissible composition
$\rho \in \mathrm{Mat}_{2,r}$; it satisfies $\Rho_{1j} = L_j / f$ and
$\Rho_{2j} = 1$ for all $j \in [r]$.  Equivalently, $\rho_{1j} = (L_j
- L_{j-1})/f$ for all $j \in [r]$, while $\rho_{21} = 1$ and
$\rho_{2j} = 0$ for all $j > 1$.

Let $\mathcal{D}_{2c}(f)$ be the set of Dyck words $w \in
\mathcal{D}_{2c}$ such that $f | L_i$ for all $i \in [r]$.  Given $w
\in \mathcal{D}_{2c}(f)$, set $\boldsymbol{L}_w/f = \{ L_i / f \mid i
\in [r-1] \}$.  The following explicit statement is now immediate from
Theorem~\ref{thm:zeta.explicit}.

\begin{pro} \label{pro:grenham.explicit}
Let $L = \g_{d,1}(\Lri)$, where $\Lri / \lri$ is an unramified
extension of degree~$f$.  Then
\begin{equation*}
\zidealo_L(s) = \frac{\zeta_{\lri^{(d+1)f}}(s)}{\zeta_{\Lri}(s) \zeta_{\Lri^d}(s)} \sum_{w \in \mathcal{D}_{2c}(f)} D_w(q,t),
\end{equation*}
where
\begin{multline*}
D_w(q,t) = \binom{d}{\boldsymbol{L}_w / f}_{q^{-f}} \prod_{j = 1}^r \binom{L_j - M_{j-1}}{L_j - M_j}_{q^{-1}} I^{\mathrm{wo}}_{(L_1 / f, 1)}(q^{-f}, q^{-f}; \boldsymbol{y}^{(1)}) \cdot \\ \prod_{j = 2}^{r} I_{(L_j - L_{j-1})/f}(q^{-f}; y_{(L_{j-1}/f) + 1}, \dots, y_{L_j / f}) \cdot \\
\prod_{j = 1}^{r-1} I^{\circ}_{M_j - M_{j-1}}(q^{-1}; x_{M_{j-1} + 1}, \dots, x_{M_j}) I_{M_r - M_{r-1}}(q^{-1}; x_{M_{r-1} + 1}, \dots, x_{M_r}).
\end{multline*}
Here the numerical data are given by
\begin{align*}
x_k & = q^{k((d+1)f + L_j - k) + L_j(d - L_j / f)} t^{k + f + L_j}, \, k \in [M_{j-1} + 1, M_j], \\
y^{(1)}_{a_1^{\alpha_1} a_2^{\alpha_2}} & = q^{f \alpha_1 (d - \alpha_1)} t^{f(\alpha_1 + \alpha_2)}, \\
y_{k} & = q^{M_{j-1}((d+k+ k(d-k) + 1)f - M_{j-1})} t^{f(k +1) + M_{j-1}}, \, k \in [(L_{j-1}/f) + 1, L_j / f].
\end{align*}
\end{pro}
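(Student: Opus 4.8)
The plan is to read the formula off directly from Theorem~\ref{thm:zeta.explicit}, using the structural data already assembled above. Recall that here $g = 1$ and $A = L^\prime = Z(L)$, so $c = c^\prime = df$ and $\varepsilon = 0$; that $m = 2$ with composition $\underline{n} = (d,1)$; that $\mfO_1 = \mfO_2 = \Lri$, with common inertia degree $f$ over $\lri$, so $q_1 = q_2 = q^f$; and that $n = \rk_\lri(L/A) = (d+1)f$. Hypothesis~\ref{hypothesis} holds by Lemma~\ref{lem:commutator.edt} together with the unramifiedness assumption, exactly as in Remark~\ref{rem:f2d.unramified}, so Theorem~\ref{thm:zeta.explicit} applies and expresses $\zidealo_L(s)$ as the product of $\zeta_{\lri^{(d+1)f}}(s)/(\zeta_\Lri(s)\zeta_{\Lri^d}(s))$ — the specialisation of $\zeta_{\mfo^n}(s)/\prod_{i} \zeta_{\mfO_i^{n_i}}(s)$ — with $\sum_{w\in\mathcal{D}_{2c}} \sum_{\rho\in\mathcal{M}_{\underline{n},w}} D_{w,\rho}(q,t)$.

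Next I would restrict the outer sum. By Lemma~\ref{lem:lambda.dual} one has $\ell(a_1^{\alpha_1}a_2^{\alpha_2}) = \alpha_1\alpha_2 f$, so every value $\widetilde{L}_j = L_j$ that can occur in a $w$-compatible flag is a multiple of $f$; hence $\mathcal{M}_{\underline{n},w} = \varnothing$, and $D_w(q,t) = 0$, unless $w\in\mathcal{D}_{2c}(f)$. For $w\in\mathcal{D}_{2c}(f)$ the radical subwords of $a_1^d a_2$ with prescribed $\ell$-value are unique (namely $a_1^{L_j/f}a_2$ realises $\ell = L_j > 0$), so as noted above $\mathcal{M}_{\underline{n},w}$ is the singleton whose unique element $\rho$ satisfies $\Rho_{1j} = L_j/f$, $\Rho_{2j}=1$ for all $j\in[r]$, i.e.\ $\rho_{1j} = (L_j-L_{j-1})/f$, $\rho_{21}=1$ and $\rho_{2j}=0$ for $j>1$; one checks it is indeed $(\underline n,w)$-admissible in the sense of Definition~\ref{dfn:adm.comp}. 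Thus the double sum collapses to $\sum_{w\in\mathcal{D}_{2c}(f)} D_{w,\rho}(q,t)$, which we rename $D_w(q,t)$.

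It then remains to evaluate $D_{w,\rho}(q,t)$ by unwinding Definition~\ref{dfn:dwk} after substituting $\Rho_{1j}=L_j/f$, $\Rho_{2j}=1$, $n_1=d$, $n_2=1$, $n=(d+1)f$, $\varepsilon=0$, $f_1=f_2=f$. The prefactor $\prod_{i}\binom{n_i}{\Rho_i}_{q_i^{-1}}$ reduces to $\binom{d}{\boldsymbol{L}_w/f}_{q^{-f}}$, using $\Rho_{1r}=d$ and $\binom{1}{\{1\}}_{q^{-f}}=1$. For $j=1$ the composition $\underline{\rho_1} = (L_1/f,\,1)$ contributes the generalized Igusa function $I^{\mathrm{wo}}_{(L_1/f,1)}(q^{-f},q^{-f};\boldsymbol{y}^{(1)})$; for $j\geq 2$ the composition $\underline{\rho_j} = ((L_j-L_{j-1})/f,\,0)$ has vanishing second part, so $C_{\underline{\rho_j}} \cong C_{(L_j-L_{j-1})/f}$ and, by Example~\ref{exm:gen.igusa}(1), $I^{\mathrm{wo}}_{\underline{\rho_j}}$ reduces to the classical Igusa function $I_{(L_j-L_{j-1})/f}(q^{-f};\dots)$ after the evident reindexing of its variables by $[(L_{j-1}/f)+1, L_j/f]$; the factors $\binom{L_j-M_{j-1}}{L_j-M_j}_{q^{-1}}$, $I^\circ_{M_j-M_{j-1}}$ and $I_{M_r-M_{r-1}}$ carry over verbatim. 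Finally the numerical data $x_k$ and $y_v^{(j)}$ come straight out of the displayed formulas of Definition~\ref{dfn:dwk}: in the block $j=1$ one has $w_0=\varnothing$ and $M_0=0$, so the $\delta$-dependent terms of $y^{(1)}_v$ drop out irrespective of $\delta^{(1)}_v$, giving $y^{(1)}_{a_1^{\alpha_1}a_2^{\alpha_2}} = q^{f\alpha_1(d-\alpha_1)}t^{f(\alpha_1+\alpha_2)}$; and for the blocks $j\geq 2$ one has $\delta^{(j)}_v = 1$ throughout, since $\ell(a_1^k a_2) = kf > L_{j-1} = \ell(w_{j-1})$ whenever $k > L_{j-1}/f$, which yields the stated uniform expressions for $x_k$ and $y_k$.

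The argument is essentially bookkeeping, most of it already carried out in the paragraphs preceding the statement; the one point that needs genuine attention is the case distinction on the indicators $\delta^{(j)}_v$ of Definition~\ref{dfn:dwk}, together with the check that — despite the many proper subwords of $a_1^d a_2$ sharing the $\ell$-value $0$ — the numerical data nevertheless collapse to the single uniform description given. The translation from projection data to the elementary divisor type of $[\Lambda,L]$, and hence the validity of Hypothesis~\ref{hypothesis}, is not an obstacle here, having been settled by Lemma~\ref{lem:commutator.edt} and the unramifiedness hypothesis.
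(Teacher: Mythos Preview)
Your proposal is correct and follows exactly the paper's approach: the paper's own justification is the single sentence ``The following explicit statement is now immediate from Theorem~\ref{thm:zeta.explicit}'', and you have carefully filled in the bookkeeping that this sentence elides, in particular the reduction to $\mathcal{D}_{2c}(f)$, the uniqueness of the admissible composition~$\rho$, and the verification that for $j=1$ the $\delta^{(1)}_v$-dependent terms vanish because $M_0 = 0$ and $\Rho_{i,0}=0$ (so that $B_v^{(1)}$ is the same in either case), while for $j\geq 2$ one always has $\delta^{(j)}_v = 1$.
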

\begin{rem} \label{rmk:grenham.explicit} Using
  Proposition~\ref{pro:grenham.explicit} to compute
  $\zeta^{\ideal}_{\mathfrak{g}_{d,1}(\Z_p)}(s)$ produces a sum
  parametrized by the $\frac{1}{d+1} \binom{2d}{d}$ elements of
  $\mathcal{D}_{2d}$.  Yet~\cite[Theorem~5]{Voll/05}, translated to
  the notation of the present paper, gives the much simpler expression
$$ \zeta^{\ideal}_{\mathfrak{g}_{d,1}(\Z_p)}(s) = \zeta_{\Z_p^{d+1}}(s) I_{d} (p^{-1}; z_1, \dots, z_d),$$
where $z_i = p^{i(2d + 1 - i)} t^{2i + 1}$ for $i \in [d]$.  We have
checked that these expressions coincide for $d \leq 3$, but a direct
proof of their equality would involve proving an identity of
generalized Igusa functions with conditions on the numerical data, in
the spirit of Proposition~\ref{pro:igusas.match}; see also
Remark~\ref{rmk:free.products.match} below.  This example shows that
expressions derived from Theorem~\ref{thm:zeta.explicit} sometimes
admit dramatic cancellation.
\end{rem}

\subsubsection{The Lie ring $\g_{2,2}$}\label{subsec:g22}
Paajanen~\cite[Theorem~11.1]{Paajanen/08} computed the ideal zeta
function of the $\lri$-Lie algebra $L = \g_{2,2}(\mfo)$.  We recover
this computation as a special case of our results.  By
Theorem~\ref{thm:zeta.explicit} we have
\begin{equation*}
\zidealo_{L} (s) = \frac{\zeta_{\lri^4}(s)}{(\zeta_{\lri^2}(s))^2}
\sum_{w \in \mathcal{D}_8} \sum_{{\rho} \in
  \mathcal{M}_{(2,2),w}} D_{w, {\rho}} (q,t) =
\frac{(1-t)(1-qt)}{(1 - q^2t)(1 - q^3t)} \sum_{w \in \mathcal{D}_8}
\sum_{{\rho} \in \mathcal{M}_{(2,2),w}} D_{w,
  {\rho}} (q,t).
\end{equation*}

There are fourteen Dyck words of length $8$, but it is easy to check
that there are only five Dyck words $w \in \mathcal{D}_8$ for which
there exist $w$-compatible flags of subwords of the word $a_1^2a_2^2$.
For simplicity, for the rest of this example we will write $a$ instead
of $a_1$ and $b$ instead of $a_2$.  We tabulate these Dyck words,
together with the associated functions $D_{w, {\rho}} (q,t)$.  Observe
that there are three Dyck words with two compatible flags, and that in
each of these cases both flags give rise to the same function $D_{w,
  {\rho}} (q,t)$.  This is a consequence of the symmetries of $L =
\g_{2,2}(\mfo)$ and is not a general phenomenon.

For brevity,
we use the notation $\gpf{x} = \frac{x}{1-x}$ and $\gpzero{x} =
\frac{1}{1-x}$.

\begin{center}
\renewcommand{\arraystretch}{1.2}{
\begin{longtable}{|c | l | l |}  
\hline
Dyck word & Flag & $D_{w, {\rho}} (q,t)$ \\
\hline\hline
$\bfz \bfz \bfz \bfz \bfo \bfo \bfo \bfo$ & $a^2b^2$ & $I^{\mathrm{wo}}_{(2,2)} (q^{-1}; \mathbf{y}) I_4(q^{-1}; q^7 t^5, q^{12} t^6, q^{15} t^7, q^{16} t^8)$ \\
\hline
\multirow{2}{*}{$\bfz \bfz \bfo \bfz \bfz \bfo \bfo \bfo$} & $a^2b < a^2b^2$ & \multirow{2}{*}{$\binom{2}{1}_{q^{-1}}^2 I^{\mathrm{wo}}_{(2,1)} (q^{-1}; \mathbf{y}) \gpf{q^6 t^4} \gpzero{q^7t^5} I_3(q^{-1};q^{12}t^6, q^{15}t^7, q^{16}t^8)$}\\
\cline{2-2}
& $ab^2 < a^2b^2$ & \\
\hline
\multirow{2}{*}{$\bfz \bfz \bfo \bfo \bfz \bfz \bfo \bfo$} & $a^2b < a^2b^2$ & 
\multirow{2}{*}{{$ \binom{2}{1}_{q^{-1}} I^{\mathrm{wo}}_{(2,1)} (q^{-1}; \mathbf{y}) I_2^\circ(q^{-1}; q^6 t^4, q^9 t^5) \gpzero{q^{12}t^6} I_2(q^{-1}; q^{15} t^7, q^{16} t^8)$}}
\\
\cline{2-2}
& $ab^2 < a^2b^2$ & \\
\hline
$\bfz \bfo \bfz \bfz \bfz \bfo \bfo \bfo$ & $ab < a^2b^2$ & 
{$\binom{2}{1}_{q^{-1}}^2 I^{\mathrm{wo}}_{(1,1)}(q^{-1}, \mathbf{y}) \gpf{q^6 t^3} I^{\mathrm{wo}}_{(1,1)} (q^{-1}; \mathbf{z}) I_3(q^{-1}; q^{12}t^6, q^{15}t^7, q^{16}t^8)$}
\\
\hline
\multirow{2}{*}{$\bfz \bfo \bfz \bfo \bfz \bfz \bfo \bfo$} & \tiny{$ab < a^2b < a^2b^2$} & 
\multirow{2}{*}{\makecell{$\binom{2}{1}_{q^{-1}}^2 I^{\mathrm{wo}}_{(1,1)} (q^{-1}; \mathbf{y}) \gpf{q^6 t^3} \gpzero{q^6 t^4} \gpf{q^9 t^5}\gpzero{q^{12}t^6} \cdot$ \\ $ I_2(q^{-1}; q^{15}t^7, q^{16}t^8)$}}
\\
\cline{2-2}
& \tiny{$ab < ab^2 < a^2b^2$} & \\
\hline
    \end{longtable} }
\end{center}
Here the numerical data $\mathbf{y}$ and $\mathbf{z}$ are defined as follows:
$$
\begin{array}{lllll}
y_a = y_b = qt & y_{a^2} = y_{b^2} = t^2 & y_{ab} = q^2 t^2 &
y_{a^2b} = y_{ab^2} = qt^3 & y_{a^2 b^2} = t^4,  \\
z_a = z_b = q^6 t^4 & z_{ab} = q^7 t^5. & & 
\end{array}
$$

\subsubsection{The Heisenberg Lie ring} \label{subsec:heisenberg}
The relatively free product $\g_{1,1}$ is the Heisenberg Lie
ring~$\mathfrak{h}$.  This ring is spanned over $\Z$ by three
generators $x,y,z$, with the relations $[x,y] = z$,
$[x,z]=[y,z]=0$. It is among the  smallest non-abelian nilpotent Lie rings.
It was studied by two of the authors in~\cite{SV1/15}, in the case
$\lri = \Z_p$; the zeta functions computed there can be recovered as
special cases of the analysis in this section. Indeed, consider
$$L = \mathfrak{h}(\Lri_1) \times \cdots \times
\mathfrak{h}(\Lri_g),$$ where the $\Lri_i$ are unramified over $\lri$
so that Hypothesis~\ref{hypothesis} holds.  Then $c = \sum_{i = 1}^g
f_i$, while $n = 2c$.  Note that the quantity denoted $n$
in~\cite{SV1/15} is called $c$ in the current paper. 
The composition $\underline{n}$ defined just before the statement of
Lemma~\ref{lem:lambda.dual} is $\underline{n} = (1, 1, \dots, 1)$,
with $2g$ parts. Thus the elements of $C_{\underline{n}}$ correspond
to subwords of the word $a_1 \cdots a_{2g}$.  The radical subwords are
the words of the form $\prod_{i \in J} a_i a_{i+g}$ for some $J
\subseteq [g]$.  Thus radical subwords are in bijection with subsets
of $[g]$.  Moreover, if $w \in \mathcal{D}_{2c}$ is a Dyck word, then
a $w$-compatible flag $V = (v_1 < \cdots < v_r) \in \mathcal{F}_w$
corresponds to a sequence of subsets $J_1 \subset \cdots \subset
J_r = [g]$ such that $\sum_{i \in J_j \setminus J_{j-1}} f_i = L_j -
L_{j-1}$ for all $j \in [r]$.  Setting $\mathcal{A}_j = J_j \setminus
J_{j-1}$, we obtain precisely the set partitions of $[g]$ that are
compatible with $w$, in the sense of~\cite[Definition~3.4]{SV1/15}.
Recall that the set of set partitions compatible with $w$ was denoted
$\mathcal{P}_w$ in~\cite{SV1/15}.

We see from Theorem~\ref{thm:zeta.explicit}, applied to $L =
\mathfrak{g}_{1,1}(\Lri_1) \times \cdots \times \mathfrak{g}_{1,1}(\Lri_g)$, that
\begin{equation*}
\zidealo_L (s) = \frac{\zeta_{\lri^{2c}}(s)}{\prod_{i =
    1}^g \zeta_{\Lri_i}(s)^2} \sum_{w \in \mathcal{D}_{2c}}
\sum_{{\rho} \in \mathcal{M}_{\underline{n}, w}} D_{w,
  {\rho}}(q,t) = \zeta_{\lri^{2c}}(s) \left( \prod_{i = 1}^g
(1 - t^{f_i})^2 \right) \sum_{w \in \mathcal{D}_{2c} \atop
  {\rho} \in \mathcal{M}_{\underline{n}, w}} D_{w,
  {\rho}}(q,t).
\end{equation*}
Now set $\lri = \Z_p$; in particular, $q = p$.
A comparison with \cite[eq.~(2.20)]{SV1/15} and the displayed equation
immediately before \cite[Theorem~3.6]{SV1/15} shows that, to recover
the results obtained there, it suffices to prove that if
${\rho} \in \mathcal{M}_{\underline{n}, w}$ is associated to
a set partition $\{ \mathcal{A}_j \}_{j \in [r]} \in \mathcal{P}_w$,
then
\begin{equation} \label{equ:suffices.for.heisenberg}
\left( \prod_{i = 1}^g (1 - t^{f_i})^2 \right) D_{w, {\rho}}(p,t) = \left( \prod_{i = 1}^g (1 - t^{2f_i}) \right) D^{\mathbf{f}}_{w, \mathcal{A}}(p,t),
\end{equation}
where $D^{\mathbf{f}}_{w, \mathcal{A}}(p,t)$ is defined
by~\cite[(3.12)]{SV1/15}.

We read off from Definition~\ref{dfn:dwk} that,
for ${\rho} \in \mathcal{M}_{\underline{n}, w}$,
\begin{multline} \label{equ:heisenberg.case}
D_{w, {\rho}}(p,t) = \prod_{j = 1}^r \left( \binom{L_j - M_{j-1}}{L_j
  - M_j}_{p^{-1}} I_{\prod_{k \in \mathcal{A}_j} a_k
  a_{k+g}}^{\mathrm{wo}} (\mathbf{y}^{(j)}) \right) \cdot
\\ \left(\prod_{j = 1}^{r-1} I^\circ_{M_j - M_{j-1}}(p^{-1};
x_{M_{j-1} + 1}, \dots, x_{M_j}) \right)I_{M_r - M_{r-1}}(p^{-1};
x_{M_{r-1}+1}, \dots, x_{M_r}),
\end{multline}
with the numerical data specified there.  Since the parameters
$q_i^{-1}$ do not actually appear in the relevant generalized Igusa
functions, we have omitted them from the notation (just as in
Proposition~\ref{pro:igusas.match}).  Observe that the numerical data
$x_k$ in~\eqref{equ:heisenberg.case} match those in the formula for
$D^{\mathbf{f}}_{w, \mathcal{A}}(p,t)$ given
in~\cite[Theorem~3.6]{SV1/15}.  Moreover, if $r_{\mathcal I} =
\prod_{k \in \mathcal{I}} a_k a_{k+g}$ is a radical subword of
$\prod_{k \in \mathcal{A}_j} a_k a_{k+g}$, then the numerical datum
$y_{r_{\mathcal{I}}}^{(j)}$ matches the numerical datum
$y_{\mathcal{I}}^{(j)}$ of~\cite[Theorem 3.6]{SV1/15}.  In addition,
we observe that the numerical data of Definition~\ref{dfn:dwk} satisfy
the hypothesis of Proposition~\ref{pro:igusas.match}.  Recalling from
Example~\ref{exm:gen.igusa} how to express the weak order Igusa
functions of~\cite[Definition 2.9]{SV1/15} in terms of the generalized
Igusa functions of Definition~\ref{def:igusa.wo} above, we find that
Proposition~\ref{pro:igusas.match} indeed
implies~\eqref{equ:suffices.for.heisenberg}.

\begin{rem} \label{rmk:free.products.match}
Observe that $\mathfrak{h} = \mathfrak{f}_{2,2}$.  Thus we can view $L
= \mathfrak{f}_{2,2}(\Lri_1) \times \cdots \times
\mathfrak{f}_{2,2}(\Lri_g)$ and obtain an expression for $\zidealo_L
(s)$ by specializing the analysis of
Section~\ref{subsec:free.nilpotent}.  This expression is not obviously
equal to the one obtained above by considering $L =
\mathfrak{g}_{1,1}(\Lri_1) \times \cdots \times
\mathfrak{g}_{1,1}(\Lri_g)$ and using the approach of
Section~\ref{subsec:rel.free.prod}, or to that
of~\cite[Theorem~3.6]{SV1/15}.  To verify the equality directly, one
has to prove identities between generalized Igusa functions that
depend on the numerical data, in the style of
Proposition~\ref{pro:igusas.match}.  We leave this as an exercise for
the reader.
\end{rem}

\subsection{The higher Heisenberg Lie rings} \label{subsec:higher.heisenberg}
Let $d \in \N$.  The higher Heisenberg Lie ring $\mathfrak{h}_d$
consists of $d$ copies of the Heisenberg Lie ring $\mathfrak{h}$,
amalgamated over their centres; in particular $\mathfrak{h}_1 = \mathfrak{h}$.  More precisely, $\mathfrak{h}_d$ is
spanned over $\Z$ by $2d+1$ elements $x_1, \dots, x_d, y_1, \dots,
y_d, z$, with the relations $[x_i, y_i] = z$ for all $i \in [d]$; all
other pairs of generators commute.  Let
$$L = \mathfrak{h}_{d_1}(\Lri_1) \times \cdots \times
\mathfrak{h}_{d_g}(\Lri_g),$$ where $(d_1, \dots, d_g) \in \N^g$ and
each $\Lri_i$ is a finite, not necessarily unramified extension of
$\lri$.  In the case of $d_1 = \cdots = d_g$ and $\lri = \Lri_1 =
\cdots = \Lri_g = \Z_p$, the zeta function $\zidealo_L(s)$ was
computed by Bauer in his unpublished M.Sc.~thesis~\cite{Bauer/13} by adapting the methods
of~\cite{SV1/15}.
Observe that
\begin{equation} \label{equ:higher.heisenberg.decomp}
\overline{L} \simeq  \Lri_1^{d_1} \times \Lri_1^{d_1} \times \cdots \times \Lri_g^{d_g} \times \Lri_g^{d_g} = 
 \underbrace{\Lri_1 \times \cdots \times \Lri_1}_{2d_1 \, \text{copies}} \times \cdots \times
\underbrace{\Lri_g \times \cdots \times \Lri_g}_{2d_g \, \text{copies}}.
\end{equation}
Set $S_i = \sum_{j = 1}^i 2d_j$.  We have naturally expressed
$\overline{L}$ as a product of $S_g$ submodules, giving rise to
projections $\pi_1, \dots, \pi_{S_g}$ as in
Section~\ref{subsec:rewriting}, where $\pi_k : \overline{L} \to
\Lri_i$ when $S_{i-1} < k \leq S_i$.  Let $\Lambda \leq \overline{L}$
be an $\lri$-sublattice, and let $\boldsymbol{\nu}(\Lambda) =
(\nu^{(1)}, \dots, \nu^{(S_g)})$ be the corresponding projection data
with respect to~\eqref{equ:higher.heisenberg.decomp}; each of these
$S_g$ partitions has only one part.  Note that $L^\prime = Z(L)$ has
rank $c = \sum_{i = 1}^g e_i f_i$ as an $\lri$-module.

\begin{lem}
Let $\Lambda \leq \overline{L}$ be an $\lri$-sublattice.  The
$\lri$-elementary divisor type $\lambda(\Lambda)$ of the commutator
$[\Lambda, \overline{L}] \leq L^\prime$ is obtained from the following
multiset with $c$ elements:
$$ \coprod_{i = 1}^g \left\{ \min \{ \nu^{(S_{i - 1} + 1)}_1, \nu^{(S_{i-1} + 2)}_1, \dots, \nu^{(S_i)}_1 \}_{e_i, f_i} \right\}.$$
\end{lem}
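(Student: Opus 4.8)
The plan is to mimic the proof of Lemma~\ref{lem:commutator.edt}, specialized to the higher Heisenberg setting, and then invoke Lemma~\ref{lem:oedt.to.zp} to pass from $\mfO_i$-elementary divisor types to $\mfo$-elementary divisor types. First I would fix, for each $i\in[g]$, an $\Lri_i$-basis $x_1^{(i)},\dots,x_{d_i}^{(i)},y_1^{(i)},\dots,y_{d_i}^{(i)},z^{(i)}$ of $\mathfrak{h}_{d_i}(\Lri_i)$, where $z^{(i)}$ spans the centre of the $i$-th factor, and choose the uniformizer $\Pi_i\in\Lri_i$. After a suitable change of $\Lri_i$-basis (diagonalizing $\pi_k(\Lambda)$ for $S_{i-1}<k\le S_i$), I may assume the projection of $\overline{\Lambda}$ onto the $\Lri_i$-span of the generators of $\mathfrak{h}_{d_i}(\Lri_i)$ is generated by $\Pi_i^{\nu^{(S_{i-1}+1)}_1}x_1^{(i)},\dots,\Pi_i^{\nu^{(S_{i-1}+d_i)}_1}x_{d_i}^{(i)},\Pi_i^{\nu^{(S_{i-1}+d_i+1)}_1}y_1^{(i)},\dots,\Pi_i^{\nu^{(S_i)}_1}y_{d_i}^{(i)}$.

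Next I would compute the commutator. Since $[x_j^{(i)},y_k^{(i)}]=\delta_{jk}z^{(i)}$ and all other brackets among generators of the $i$-th factor vanish, the $\Lri_i$-sublattice $[\pi_i(\overline{\Lambda}),\pi_i(L)]$ of $\Lri_i z^{(i)}$ is generated by $\Pi_i^{\nu^{(S_{i-1}+j)}_1}z^{(i)}$ (from bracketing the image of $x_j^{(i)}$ against $L$) and $\Pi_i^{\nu^{(S_{i-1}+d_i+j)}_1}z^{(i)}$ (from bracketing the image of $y_j^{(i)}$ against $L$), for $j\in[d_i]$. Hence this lattice equals $\Pi_i^{\,m_i}\Lri_i z^{(i)}$ where $m_i=\min\{\nu^{(S_{i-1}+1)}_1,\dots,\nu^{(S_i)}_1\}$, i.e.\ its $\Lri_i$-elementary divisor type is the one-part partition $(m_i)$. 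By the direct-product structure of $L$, the commutator $[\overline{\Lambda},L]$ decomposes as $\bigoplus_{i=1}^g \Pi_i^{m_i}\Lri_i z^{(i)}$ as an $\lri$-module. Applying Lemma~\ref{lem:oedt.to.zp} to each summand (the extension $\Lri_i/\lri$ has ramification index $e_i$ and inertia degree $f_i$), the $\lri$-elementary divisor type of $\Pi_i^{m_i}\Lri_i$ inside $\Lri_i$ is given by the multiset $\{m_i\}_{e_i,f_i}$. Taking the disjoint union over $i\in[g]$ yields exactly the claimed multiset with $c=\sum_{i=1}^g e_if_i$ elements.

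The step I expect to require the most care is the base-change bookkeeping: one must check that diagonalizing the $d_i+d_i=2d_i$ projections $\pi_k(\overline{\Lambda})$ simultaneously via a \emph{single} change of $\Lri_i$-basis respecting the decomposition $\Lri_i^{d_i}\times\Lri_i^{d_i}$ is legitimate — but since each $\pi_k$ for $S_{i-1}<k\le S_i$ lands in a distinct rank-one summand $\Lri_i$, these diagonalizations are independent and no conflict arises. (This is the analogue of the independence of $\pi_i$ and $\pi_i'$ in the proof of Lemma~\ref{lem:commutator.edt}.) One should also note explicitly that, as in Remark~\ref{rem:ram}, Hypothesis~\ref{hypothesis} will force the $\Lri_i$ to be unramified whenever $d_i\ge 1$, so the multiset $\{m_i\}_{e_i,f_i}$ is really $\{m_i\}^{(f_i)}$ in all cases where the framework of Section~\ref{sec:main.results} applies; but the lemma itself, being a purely structural statement about $[\Lambda,\overline{L}]$, holds for arbitrary finite extensions $\Lri_i/\lri$ and should be proved at that level of generality. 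The remaining verifications — that the $m_i$ are computed correctly and that Lemma~\ref{lem:oedt.to.zp} applies summand-by-summand — are routine.
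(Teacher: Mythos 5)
Your proof takes essentially the same route as the paper's: read off the commutator factor-by-factor from the rank-one projections $\pi_k(\Lambda)$, and pass from the $\Lri_i$- to the $\lri$-elementary divisor type via Lemma~\ref{lem:oedt.to.zp}. One caveat: the claim that you ``may assume the projection of $\overline{\Lambda}$ onto the $\Lri_i$-span of the generators'' (i.e.\ onto $\Lri_i^{2d_i}$) is the diagonal lattice $\bigoplus_k \Pi_i^{\nu^{(k)}_1}\Lri_i$ is \emph{not} generally achievable by a change of basis respecting the decomposition into rank-one summands --- already with $d_i = 1$ and $\Lri_i = \lri$, the sublattice of $\lri^2$ generated by $(1,1)$ and $(0,\pi)$ has both rank-one projections equal to $\lri$ yet is a proper sublattice. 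Happily that claim is never actually used: $[\Lambda, L]$ is spanned by the $[\Lambda, \Lri_i x^{(i)}_k]$ and $[\Lambda, \Lri_i y^{(i)}_k]$, and each of these is determined by a \emph{single} rank-one projection ($\pi_{S_{i-1}+d_i+k}(\Lambda)$ or $\pi_{S_{i-1}+k}(\Lambda)$, respectively), which is automatically of the form $\Pi_i^{\nu^{(\cdot)}_1}\Lri_i$ with no basis change required --- this is precisely how the paper's proof proceeds. Your computation of $\Pi_i^{m_i}\Lri_i z^{(i)}$ and the application of Lemma~\ref{lem:oedt.to.zp} are both correct, and your closing remark is right: the lemma is a structural statement valid for arbitrary finite extensions, with Hypothesis~\ref{hypothesis} forcing unramifiedness only once the general machinery of Section~\ref{sec:main.results} is invoked, exactly as the paper observes immediately after the lemma.
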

\begin{proof}
Let $(x_1^{(i)}, \dots, x_{d_i}^{(i)}, y_1^{(i)}, \dots,
y_{d_i}^{(i)}, z^{(i)})$ be the natural basis of
$\mathfrak{h}_{d_i}(\Lri_i)$ as an $\Lri_i$-module.  Let the
decomposition~\eqref{equ:higher.heisenberg.decomp} be such that, for
every $k \in [d_i]$, the images of $\pi_{S_{i-1} + k}$ and
$\pi_{S_{i-1} + d_i + k}$ are $\Lri_i x_k^{(i)}$ and $\Lri_i
y_k^{(i)}$, respectively.  If $\Pi_i \in \Lri_i$ is a uniformizer,
then it is clear that, for all $i \in [g]$ and all $k \in [d_i]$,
\begin{align*}
[\Lambda, \Lri_i x^{(i)}_k]  &=  \Pi_i^{\nu_1^{(S_{i-1} + g_i + k)}} \Lri_i z^{(i)} \\
[\Lambda, \Lri_i y^{(i)}_k]  &=  \Pi_i^{\nu_1^{(S_{i-1} + k)}} \Lri_i z^{(i)}.
\end{align*}
The claim follows.
\end{proof}

It is immediate from the previous lemma that
Hypothesis~\ref{hypothesis} is satisfied if all the extensions $\Lri_i
/ \lri$ are unramified.  In this case, we set $\klim = \sum_{i = 1}^g f_i$ and take the collection $\widetilde{\mathfrak{S}}_1, \dots, \widetilde{\mathfrak{S}}_{\klim}$ to consist of $f_i$ copies of the pair $([S_{i-1} + 1, S_i], (1,1,\dots,1))$ for every $i \in [g]$.  The following is then given by Lemma~\ref{lem:length.function}.
\begin{lem}
Let $v = \prod_{k = 1}^{S_g} a_k^{\alpha_k} \in C_{\underline{n}}$. Then $\ell(v) = \sum_{i = 1}^g \left( \prod_{k = S_{i - 1} + 1}^{S_i} \alpha_k \right) f_i$.
\end{lem}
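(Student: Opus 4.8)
The plan is to obtain this identity as a direct specialization of Lemma~\ref{lem:length.function}, using the explicit description of the data $\widetilde{\mathfrak{S}}_1, \dots, \widetilde{\mathfrak{S}}_{\klim}$ recorded just above its statement: for each $i \in [g]$ these data comprise $f_i$ copies of the pair $\bigl([S_{i-1}+1, S_i],\,(1,1,\dots,1)\bigr)$, where the tuple of ones has $S_i - S_{i-1} = 2d_i$ entries. Since $\klim = \sum_{i=1}^g f_i$, the set $[\klim]$ is thereby partitioned into blocks, the $i$-th block having size $f_i$ and consisting of indices $k$ with $\mathfrak{S}_k = \{S_{i-1}+1,\dots,S_i\}$, $\tau_k = 2d_i$, and $\sigma_{kj} = 1$ for all $j \in [\tau_k]$.

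First I would fix such a $k$ in the block associated with a given $i \in [g]$ and compute the corresponding summand in the formula of Lemma~\ref{lem:length.function}. Using $\binom{a}{1} = a$, it equals
$$\prod_{j=1}^{\tau_k} \binom{\alpha_{s_{kj}}}{\sigma_{kj}} = \prod_{\ell = S_{i-1}+1}^{S_i} \binom{\alpha_\ell}{1} = \prod_{\ell = S_{i-1}+1}^{S_i} \alpha_\ell.$$
Summing over all $k \in [\klim]$ — equivalently, over $i \in [g]$ with each contribution counted $f_i$ times — yields
$$\ell(v) = \lambda(\boldsymbol{\nu})^\prime_{m(v)} = \sum_{i=1}^g f_i \prod_{\ell = S_{i-1}+1}^{S_i} \alpha_\ell,$$
which is the asserted identity.

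There is essentially no obstacle: the substance is already carried by Lemma~\ref{lem:length.function}, hence by Definition~\ref{dfn:lambda.nu} and by the verification that Hypothesis~\ref{hypothesis} holds in the unramified case, which was done in the preceding lemma. If a self-contained argument were preferred, one could instead reason combinatorially as in Lemmata~\ref{lem:f2d.length} and~\ref{lem:lambda.dual}: for $v = \prod_{k=1}^{S_g} a_k^{\alpha_k}$, a part $\min\{\nu^{(S_{i-1}+1)}_1, \dots, \nu^{(S_i)}_1\}$ of $\lambda(\boldsymbol{\nu})$ is $\ge m(v)$ precisely when all $2d_i$ entries $\nu^{(\ell)}_1$ with $\ell \in [S_{i-1}+1, S_i]$ lie in $S_v$, i.e.\ precisely when $\alpha_\ell = 1$ for every such $\ell$; each such part occurs in $\lambda(\boldsymbol{\nu})$ with multiplicity $f_i$, so that $\lambda(\boldsymbol{\nu})^\prime_{m(v)} = \sum_{i=1}^g f_i \prod_{\ell = S_{i-1}+1}^{S_i} \alpha_\ell$, as claimed.
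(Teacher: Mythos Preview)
Your proof is correct and follows exactly the paper's approach: the paper's proof consists of the single sentence ``The following is then given by Lemma~\ref{lem:length.function},'' and you have simply spelled out this specialization in detail. The alternative combinatorial argument you sketch at the end is also sound and mirrors the style of the analogous lemmata earlier in the section.
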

An explicit expression for $\zidealo_L (s)$ can now be obtained from Theorem~\ref{thm:zeta.explicit}.

\begin{acknowledgements}
  The research of all three authors was supported by a grant from the
  GIF, the German-Israeli Foundation for Scientific Research and
  Development (1246/2014). {An extended abstract of
    this work for the FPSAC~2020 conference has appeared
    as~\cite{CSV_FPSAC/20}.}

  AC gratefully acknowledges the support of the Erwin Schr\"odinger
  International Institute for Mathematics and Physics (Vienna) and the
  Irish Research Council through grant no.\ GOIPD/2018/319.  The Emmy
  Noether Minerva Research Institute at Bar-Ilan University supported
  a visit by CV during the preliminary stages of this project. AC and
  CV are grateful to the University of Auckland for its hospitality
  during several phases of this project.
  
  We are grateful to Tomer Bauer for sharing with us some computations
  that provided important initial pointers, and to Tomer Bauer and the anonymous referee for careful
  readings of the text.
  \end{acknowledgements}

\def\cprime{$'$}
\providecommand{\bysame}{\leavevmode\hbox to3em{\hrulefill}\thinspace}
\providecommand{\href}[2]{#2}


\begin{thebibliography}{10}

\bibitem{Bauer/13}
T.~Bauer, \emph{Computing normal zeta functions of certain groups},
  M.Sc.~thesis, Bar-Ilan University, 2013.

\bibitem{BS/23}
T.~Bauer and M.~M. Schein, \emph{Ideal growth in amalgamated powers of
  nilpotent rings of class two and zeta functions of quiver representations},
  arXiv/2202.10865, to appear in Bull. London Math. Soc.

\bibitem{BeckSanyal/18}
M.~Beck and R.~Sanyal, \emph{Combinatorial reciprocity theorems: An invitation
  to enumerative geometric combinatorics}, Graduate Studies in Mathematics,
  vol. 195, American Mathematical Society, 2018.

\bibitem{BGS/22}
M.~N. Berman, I.~Glazer, and M.~M. Schein, \emph{Pro-isomorphic zeta functions
  of nilpotent groups and {L}ie rings under base extension}, Trans.\ Amer.\
  Math.\ Soc. \textbf{375} (2022), 1051--1100.

\bibitem{BKO/even}
M.~N. Berman, B.~Klopsch, and U.~Onn, \emph{On pro-isomorphic zeta functions of
  ${D}^\ast$-groups of even {H}irsch length}, arXiv/1511.06360.

\bibitem{BermanKlopschOnn/18}
\bysame, \emph{A family of class-2 nilpotent groups, their automorphisms and
  pro-isomorphic zeta functions}, Math. Z. \textbf{290} (2018), 909--935.

\bibitem{Birkhoff/35}
G.~Birkhoff, \emph{Subgroups of {A}belian {G}roups}, Proc. London Math. Soc.
  (2) \textbf{38} (1935), 385--401.

\bibitem{Butler/94}
L.M. Butler, \emph{Subgroup lattices and symmetric functions}, Mem. Amer. Math.
  Soc. \textbf{112} (1994).

\bibitem{CSV_FPSAC/20}
A.~Carnevale, M.~M. Schein, and C.~Voll, \emph{Generalized {I}gusa functions
  and ideal growth in nilpotent {L}ie rings}, S\'{e}m. Lothar. Combin.
  \textbf{84B} (2020), Art. 71, 12.

\bibitem{CSV/18}
A.~Carnevale, S.~Shechter, and C.~Voll, \emph{Enumerating traceless matrices
  over compact discrete valuation rings}, Israel J. Math. \textbf{227} (2018),
  957--986.

\bibitem{Delsarte/48}
S.~Delsarte, \emph{Fonctions de {M}\"{o}bius sur les groupes abeliens finis},
  Ann. of Math. (2) \textbf{49} (1948), 600--609.

\bibitem{Denef/91}
J.~Denef, \emph{{Report on Igusa's local zeta function}}, S\'eminaire Bourbaki
  \textbf{43} (1990-91), no.~201-203, 359--386.

\bibitem{duS-ecII/01}
M.~P.~F. du~Sautoy, \emph{Counting subgroups in nilpotent groups and points on
  elliptic curves}, J. Reine Angew. Math. \textbf{549} (2002), 1--21.

\bibitem{duSG/00}
M.~P.~F. du~Sautoy and F.~J. Grunewald, \emph{Analytic properties of zeta
  functions and subgroup growth}, Ann. of Math. (2) \textbf{152} (2000),
  793--833.

\bibitem{duSWoodward/08}
M.~P.~F. du~Sautoy and L.~Woodward, \emph{Zeta functions of groups and rings},
  Lecture Notes in Mathematics, vol. 1925, Springer-Verlag, Berlin, 2008.

\bibitem{Dyubyuk/48}
P.~E. Dyubyuk, \emph{On the number of subgroups of an {A}belian {$p$}-group},
  Izvestiya Akad. Nauk SSSR. Ser. Mat. \textbf{12} (1948), 351--378.

\bibitem{Golovin/50}
O.~N. Golovin, \emph{Nilpotent products of groups}, Mat. Sbornik N.S.
  \textbf{27(69)} (1950), English translation in {\emph{Amer. Math. Soc.
  Transl. (2)}} {\bf{2}} (1956), 89--115.

\bibitem{GSS/88}
F.~J. Grunewald, D.~Segal, and G.~C. Smith, \emph{Subgroups of finite index in
  nilpotent groups}, Invent. Math. \textbf{93} (1988), 185--223.

\bibitem{Humphreys/90}
J.~E. Humphreys, \emph{Reflection groups and {C}oxeter groups}, Cambridge
  Studies in Advanced Mathematics, vol.~29, Cambridge University Press,
  Cambridge, 1990.

\bibitem{KlopschVoll/09}
B.~Klopsch and C.~Voll, \emph{Igusa-type functions associated to finite formed
  spaces and their functional equations}, Trans. Amer. Math. Soc. \textbf{361}
  (2009), 4405--4436.

\bibitem{KSW/99}
I.~Kovacs, D.~S. Silver, and S.~G. Williams, \emph{Determinants of
  commuting-block matrices}, Amer. Math. Monthly \textbf{106} (1999), 950--952.

\bibitem{LeeVoll/21}
S.~Lee and C.~Voll, \emph{Zeta functions of integral nilpotent quiver
  representations}, arXiv/2006.12346, to appear in IMRN.

\bibitem{LeeVoll/18}
\bysame, \emph{Enumerating graded ideals in graded rings associated to free
  nilpotent {L}ie rings}, Math. Z. \textbf{290} (2018), 1249--1276.

\bibitem{Levi/44}
F.~W. Levi, \emph{Notes on group-theory. {IV}-{VI}}, J. Indian Math. Soc. (N.
  S.) \textbf{8} (1944), 78--91.

\bibitem{Lins1/19}
P.~M. L{ins}~de Araujo, \emph{Bivariate representation and conjugacy class zeta
  functions associated to unipotent group schemes, {I}: {A}rithmetic
  properties}, J. Group Theory \textbf{22} (2019), 741--774.

\bibitem{Lins2/20}
\bysame, \emph{Bivariate representation and conjugacy class zeta functions
  associated to unipotent group schemes, {II}: {G}roups of type {$F$}, {$G$},
  and {$H$}}, Internat. J. Algebra Comput. \textbf{30} (2020), 931--975.

\bibitem{Neukirch/99}
J.~Neukirch, \emph{Algebraic number theory}, Grundlehren der Mathematischen
  Wissenschaften, vol. 322, Springer-Verlag, Berlin, 1999.

\bibitem{Neumann/67}
H.~Neumann, \emph{Varieties of groups}, Springer-Verlag New York, Inc., New
  York, 1967.

\bibitem{Paajanen/08}
P.~M. Paajanen, \emph{Geometric structure of class two nilpotent groups and
  subgroup growth}, arXiv/0802.1796.

\bibitem{rossmannzeta}
T.~Rossmann, \emph{{Zeta}}, \url{https://torossmann.github.io/Zeta/}, version
  0.4.2.

\bibitem{Rossmann/18}
\bysame, \emph{Computing local zeta functions of groups, algebras, and
  modules}, Trans. Amer. Math. Soc. \textbf{370} (2018), 4841--4879.

\bibitem{RV/19}
T.~Rossmann and C.~Voll, \emph{{Groups, graphs, and hypergraphs: average sizes
  of kernels of generic matrices with support constraints}}, arXiv:1908.09589,
  to appear in Mem. Amer. Math. Soc.

\bibitem{SV1/15}
M.~M. Schein and C.~Voll, \emph{{Normal zeta functions of the Heisenberg groups
  over number rings~I -- the unramified case}}, J. Lond. Math. Soc. (2)
  \textbf{91} (2015), 19--46.

\bibitem{SV2/16}
\bysame, \emph{Normal zeta functions of the {H}eisenberg groups over number
  rings~{II}---the non-split case}, Israel J. Math. \textbf{211} (2016),
  171--195.

\bibitem{Stanley/12}
R.~P. Stanley, \emph{Enumerative combinatorics. {V}ol. 1}, Cambridge Studies in
  Advanced Mathematics, vol.~49, Cambridge University Press, Cambridge, 2012,
  Second edition.

\bibitem{Stanley/99}
\bysame, \emph{Enumerative combinatorics. {V}ol. 2}, Cambridge Studies in
  Advanced Mathematics, vol.~62, Cambridge University Press, Cambridge, 1999.

\bibitem{StasinskiVoll/14}
A.~Stasinski and C.~Voll, \emph{{Representation zeta functions of nilpotent
  groups and generating functions for Weyl groups of type $B$}}, Amer. J. Math.
  \textbf{136} (2014), 501--550.

\bibitem{Voll/04}
C.~Voll, \emph{{Zeta functions of groups and enumeration in Bruhat--Tits
  buildings}}, Amer. J. Math. \textbf{126} (2004), 1005--1032.

\bibitem{Voll/05}
\bysame, \emph{{Functional equations for local normal zeta functions of
  nilpotent groups}}, Geom. Func. Anal. (GAFA) \textbf{15} (2005), 274--295,
  with an appendix by A. Beauville.

\bibitem{Voll/05a}
\bysame, \emph{{Normal subgroup growth in free class-$2$-nilpotent groups}},
  Math. Ann. \textbf{332} (2005), 67--79.

\bibitem{VollBLMS/06}
\bysame, \emph{{Counting subgroups in a family of nilpotent semidirect
  products}}, Bull. London Math. Soc. \textbf{38} (2006), 743--752.

\bibitem{Voll/10}
\bysame, \emph{{Functional equations for zeta functions of groups and rings}},
  Ann. of Math. (2) \textbf{172} (2010), 1181--1218.

\bibitem{Voll/11}
\bysame, \emph{A newcomer's guide to zeta functions of groups and rings},
  Lectures on profinite topics in group theory, London Math. Soc. Stud. Texts,
  vol.~77, Cambridge Univ. Press, Cambridge, 2011, pp.~99--144.

\bibitem{Voll/19}
\bysame, \emph{Local functional equations for submodule zeta functions
  associated to nilpotent algebras of endomorphisms}, Int. Math. Res. Not. IMRN
  (2019), 2137--2176.

\bibitem{Voll/20}
\bysame, \emph{Ideal zeta functions associated to a family of class-2-nilpotent
  {L}ie rings}, Q. J. Math. \textbf{71} (2020), 959--980.

\bibitem{Yeh/48}
Y.~Yeh, \emph{On prime power {A}belian groups}, Bull. Amer. Math. Soc.
  \textbf{54} (1948), 323--327.

\bibitem{Zordan/17}
M.~Zordan, \emph{Univariate and bivariate zeta functions of unipotent group
  schemes of type {$G$}}, Internat. J. Algebra Comput. \textbf{32} (2022),
  653--682.

\end{thebibliography}
\end{document}